\newtheorem{theorem}{Theorem}[section]
\newtheorem{lem}[theorem]{Lemma}
\newtheorem{cor}[theorem]{Corollary}
\newtheorem{prop}[theorem]{Proposition}
\newtheorem{rem}{Remark}[section]
\newtheorem{exa}{Example}[section]
\numberwithin{equation}{section}
\renewcommand{\a}{\alpha}
\renewcommand{\b}{\beta}
\newcommand{\e}{\varepsilon}
\newcommand{\de}{\delta}
\newcommand{\fa}{\varphi}
\newcommand{\ga}{\gamma}
\renewcommand{\k}{\kappa}
\newcommand{\la}{\lambda}
\newcommand{\si}{\sigma}
\renewcommand{\t}{\tau}
\newcommand{\om}{\omega}
\newcommand{\De}{\Delta}
\newcommand{\Ga}{\Gamma}
\newcommand{\La}{\Lambda}
\newcommand{\lan}{\langle}
\newcommand{\ran}{\rangle}
\def\R{{\mathbb{R}}}
\def\N{{\mathbb{N}}}
\def\Z{{\mathbb{Z}}}
\def\T{{\mathbb{T}}}
\title{Motion by mean curvature from Glauber-Kawasaki dynamics}
\author{Tadahisa Funaki$\,^{1)}$ and Kenkichi Tsunoda$\,^{2)}$}
\date{\today}
\begin{document}
\maketitle

\begin{abstract}
\noindent
We study the hydrodynamic scaling limit for the Glauber-Kawasaki dynamics.
It is known that, if the Kawasaki part is speeded up in a diffusive space-time
scaling, one can derive the Allen-Cahn equation which is a kind of the reaction-diffusion
equation in the limit.  This paper concerns the scaling that the Glauber part, 
which governs the creation and annihilation of particles, is also speeded up
but slower than the Kawasaki part.  Under such scaling, we derive directly from 
the particle system the motion by mean curvature for the interfaces separating 
sparse and dense regions of particles as a combination of the hydrodynamic and sharp
interface limits.

\footnote{
\hskip -6mm 
${}^{1)}$ Department of Mathematics, School of Fundamental Science and Engineering,  Waseda University,
3-4-1 Okubo, Shinjuku-ku, Tokyo 169-8555, Japan. e-mail: funaki@ms.u-tokyo.ac.jp \\
${}^{2)}$ Department of Mathematics, Osaka University, 1-1, Machikaneyama-cho, Toyonaka, Osaka
560-0043, Japan. e-mail: k-tsunoda@math.sci.osaka-u.ac.jp}
\footnote{
\hskip -6mm
Keywords: Hydrodynamic limit, Motion by mean curvature, Glauber-Kawasaki dynamics,
Allen-Cahn equation, Sharp interface limit.}
\footnote{
\hskip -6mm
Abbreviated title $($running head$)$: MMC from Glauber-Kawasaki dynamics}
\footnote{
\hskip -6mm
2010MSC: Primary 60K35 ; Secondary 82C22, 74A50.}
\footnote{
\hskip -6mm
The author$^{1)}$ is supported in part by JSPS KAKENHI, 
Grant-in-Aid for Scientific Researches (S) 16H06338,
(A) 18H03672, 17H01093, 17H01097 and (B) 16KT0024, 26287014.
The author$^{2)}$ is supported in part by JSPS KAKENHI, Grant-in-Aid for
Early-Career Scientists 18K13426.}
\end{abstract}

\section{Introduction}

In this paper, we consider the Glauber-Kawasaki dynamics, that is
the simple exclusion process with an additional effect of
creation and annihilation of particles, on a $d$-dimensional 
periodic square lattice of size $N$ with $d\ge 2$ and study its hydrodynamic
behavior.  We introduce the diffusive space-time scaling for the Kawasaki part.
Namely, the time scale of particles performing random walks with exclusion
rule is speeded up by $N^2$.  It is known that, if the time scale 
of the Glauber part stays at $O(1)$, one can derive the reaction-diffusion
equation in the limit as $N\to\infty$.

This paper discusses the scaling under which
the Glauber part is also speeded up by the factor $K=K(N)$,
which is at the mesoscopic level.  More precisely, we take $K$ 
such as $K\to\infty$ satisfying $K \le 
\text{const}\times (\log N)^{1/2}$, and shows that the system exhibits
the phase separation.  In other words,
if we choose the rates of creation and annihilation in a proper
way, then microscopically the whole region is
separated into two regions occupied by different phases called
sparse and dense phases, and the macroscopic interface separating
these two phases evolves according to the motion by mean curvature.

\subsection{Known result on hydrodynamic limit}  \label{sec:1.1}

Before introducing our model, we explain a classical result on the hydrodynamic
limit for the Glauber-Kawasaki dynamics in a different scaling from ours.
Let $\T_N^d :=(\Z/N\Z)^d = \{1,2,\ldots,N\}^d$ be the $d$-dimensional 
square lattice of size $N$ with periodic boundary condition.
The configuration space is denoted by $\mathcal{X}_N = \{0,1\}^{\T_N^d}$ and
its element is described by $\eta=\{\eta_x\}_{x\in \T_N^d}$.
In this subsection, we discuss the dynamics with
the generator given by $L_N = N^2L_K+L_G$, where
\begin{align*}
L_Kf (\eta)   
&  = \frac12 \sum_{x, y\in\T_N^d:|x-y|=1}%
\left\{  f\left(  \eta^{x,y}\right)  -f\left(\eta\right)  \right\},  \\
L_Gf (\eta)  
&  =\sum_{x\in\T_N^d} c_x(\eta)
  \left\{  f\left(  \eta^x \right)  -f\left(  \eta\right)  \right\},
\end{align*}
for a function $f$ on $\mathcal{X}_N$.  The configurations
$\eta^{x,y}$ and $\eta^x\in \mathcal{X}_N$ are defined from 
$\eta\in \mathcal{X}_N$ as 
\begin{align*}
(\eta^{x,y})_z = \begin{cases}
     \eta_y  & \text{if $z=x$}, \\
     \eta_x  & \text{if $z=y$}, \\
     \eta_z    & \text{if $z\neq x,y$},
\end{cases}
\quad
(\eta^x)_z = \begin{cases}
     1-\eta_z  & \text{if $z=x$}, \\
     \eta_z  & \text{if $z\neq x$}. 
\end{cases}
\end{align*}
The flip rate $c(\eta)\equiv c_0(\eta)$ in the Glauber part is a nonnegative
local function on $\mathcal{X}:=\{0,1\}^{\Z^d}$ (regarded as that on
$\mathcal{X}_N$ for $N$ large enough), $c_x(\eta) = c(\t_x\eta)$ and $\t_x$ is the 
translation acting on $\mathcal{X}$ or $\mathcal{X}_N$ defined
by $(\t_x\eta)_z =\eta_{z+x}, z\in \Z^d$ or $\T_N^d$.
In fact, $c(\eta)$ has the following form:
\begin{equation}  \label{eq:c+-}
c(\eta) = c^+(\eta)(1-\eta_0)+c^-(\eta) \eta_0,
\end{equation}
where $c^+(\eta)$ and $c^-(\eta)$ represent the rates of creation 
and annihilation of a particle at $x=0$, respectively, and both are 
local functions which do not depend on the occupation variable $\eta_0$.

Let $\eta^N(t)=\{\eta_x^N(t)\}_{x\in \T_N^d}$ be the Markov process
on  $\mathcal{X}_N$ generated by $L_N$.  The macroscopically scaled
empirical measure on $\T^d$, that is $[0,1)^d$ with the periodic boundary,
associated with a configuration $\eta\in \mathcal{X}_N$ is defined by
\begin{align}  \notag
&\a^N(dv;\eta) = \frac1{N^d} \sum_{x\in \T_N^d}
\eta_x \de_{x/N}(dv),\quad v \in \T^d, 
\intertext{and we set}
& \a^N(t,dv) = \a^N(dv;\eta^N(t)), \quad t\ge 0.
\label{eq:aNt}
\end{align}

Then, it is known that the empirical measure
$\a^N(t,dv)$ converges to $\rho(t,v)dv$ as $N\to\infty$
in probability (multiplying a test function on $\T^d$)
if this holds at $t=0$.  Here, $\rho(t,v)$ is a unique weak solution
of the reaction-diffusion equation
\begin{equation}  \label{eq:RD}
\partial_t\rho= \De\rho+f(\rho), \quad v\in \T^d,
\end{equation}
with the given initial value $\rho(0)$, $dv$ is the Lebesgue measure on $\T^d$ and
\begin{equation}  \label{eq:4.f}
f(\rho) = E^{\nu_\rho}[(1-2\eta_0) c(\eta)],
\end{equation}
where $\nu_\rho$ is the Bernoulli measure on $\Z^d$ with mean $\rho\in [0,1]$.
This was shown by De Masi et al. \cite{DFL}; see also 
\cite{DP} and \cite{FLT} for further developments in the Glauber-Kawasaki dynamics.
We use the same letter $f$ for functions on $\mathcal{X}_N$ and the reaction
term defined by \eqref{eq:4.f}, but these should be clearly distinguished.

From \eqref{eq:c+-}, the reaction term can be rewritten as
\begin{align*}
f(\rho) & =  E^{\nu_\rho}[c^+(\eta) (1-\eta_0) - c^-(\eta)\eta_0] \\
& = c^+(\rho)(1-\rho) - c^-(\rho) \rho,
\end{align*}
if $c^\pm$ are given as the finite sum of the form:
\begin{equation}  \label{eq:cpm}
c^\pm(\eta) = \sum_{0\notin \La\Subset \Z^d} c_\La^\pm \prod_{x\in\La}\eta_x,
\end{equation}
with some constants $c_\La^\pm\in\R$. 
Note that $c^\pm(\rho) := E^{\nu_\rho}[c^\pm(\eta)]$ are equal to \eqref{eq:cpm}
with $\eta_x$ replaced by $\rho$.
We give an example of the flip rate $c(\eta)$ and the corresponding reaction
term $f(\rho)$ determined by \eqref{eq:4.f}.

\begin{exa}  \label{exa:4.1}
Consider $c^\pm(\eta)$ in \eqref{eq:c+-} of the form
\begin{equation}  \label{eq:1.cpm}
\begin{aligned}
& c^+(\eta) = a^+ \eta_{n_1}\eta_{n_2}+ b^+ \eta_{n_1} + c^+ >0, \\
& c^-(\eta) = a^- \eta_{n_1}\eta_{n_2}+ b^- \eta_{n_1} + c^- >0,
\end{aligned}
\end{equation}
with $a^\pm, b^\pm, c^\pm\in \R$ and $n_1, n_2 \in \Z^d$ such that three points $\{n_1, n_2, 0\}$ 
are different.  Then,
\begin{align}  \label{eq:Ex11f}
f(\rho) = -(a^++a^-)\rho^3 + (a^+-b^+-b^-)\rho^2 +(b^+-c^+-c^-)\rho+c^+.
\end{align}
In particular, under a suitable choice of six constants $a^\pm, b^\pm, c^\pm$, one can have
\begin{align}  \label{eq:4.bistable}
f(\rho)  = -C(\rho-\a_1)(\rho-\a_*)(\rho-\a_2),
\end{align}
with some $C>0$, $0<\a_1<\a_* < \a_2<1$ satisfying $\a_1+\a_2=2\a_*$;
see the example in Section 8 of \cite{FLT} with $\a_*=1/2$ given in $1$-dimensional setting.
Namely, $f(\rho)$ is bistable with stable points $\rho= \a_1, \a_2$ and unstable point
$\a_*$, and satisfies the balance condition $\int_{\a_1}^{\a_2}f(\rho)d\rho=0$.
\end{exa}

For the reaction term $f$ of the form \eqref{eq:4.bistable}, the equation \eqref{eq:RD}
considered on $\R$ instead of $\T^d$ admits a traveling wave solution 
which connects two different stable points $\a_1, \a_2$, and its speed is $0$
due to the balance condition.  The traveling wave solution with speed $0$
is called a standing wave.  See Section \ref{sec:4.2-a} for details.

\subsection{Our model and main result}
\label{sec:4.2}

The model we concern in this paper is the Glauber-Kawasaki dynamics
$\eta^N(t)=\{\eta_x^N(t)\}_{x\in \T_N^d}$ 
on $\mathcal X_N$ with the generator $L_N = N^2 L_K + K L_G$ with another
scaling parameter $K>0$.
The parameter $K$ depends on $N$ as $K=K(N)$ and tends to
$\infty$ as $N\to\infty$.  

If we fix $K$ so as to be independent of $N$, then, as we saw in Section \ref{sec:1.1}, we obtain the 
reaction-diffusion equation for $\rho\equiv \rho^K(t,v)$
in the hydrodynamic limit:
\begin{equation}  \label{eq:RD-2}
\partial_t\rho= \De\rho+Kf(\rho), \quad v\in \T^d.
\end{equation}
The partial differential equation (PDE) \eqref{eq:RD-2} 
with the large reaction term $Kf$, which is
bistable and satisfies the balance condition as in Example
\ref{exa:4.1}, is called the Allen-Cahn equation.  It is known
that as $K\to \infty$ the Allen-Cahn equation leads to
the motion by mean curvature; see Section \ref{sec:4.4}.
Our goal is to derive it directly from the particle system.  

For our main theorem, we assume the
following five conditions on the creation and annihilation rates
$c^\pm(\eta)$ and the mean $u^N(0,x) = E[\eta_x^N(0)]$,
$x\in \T_N^d$ of the initial distribution of our process.
\begin{itemize}
\item[(A1)] 
$c^\pm(\eta)$ have the form \eqref{eq:1.cpm} with
$n_1, n_2 \in \Z^d$, both of which have at least one positive
components, and three points $\{n_1,n_2,0\}$ are different.
\item[(A2)]
The corresponding $f$ defined by \eqref{eq:4.f} or equivalently by
\eqref{eq:Ex11f} is bistable, that is, $f$ has exactly three zeros 
$0<\a_1<\a_*<\a_2<1$ and $f'(\a_1)<0$, $f'(\a_2)<0$ hold, and it
satisfies the balance condition $\int_{\a_1}^{\a_2}f(\rho) d\rho=0$.
\item[(A3)] $c^+(u)$ is increasing and $c^-(u)$ is decreasing in 
$u=\{u_{n_k}\}_{k=1}^2 \in [0,1]^2$ under the partial order $u\ge v$
defined by $u_{n_k}\ge v_{n_k}$ for $k=1,2$, where $c^\pm(u)$ are 
defined by \eqref{eq:1.cpm} with $\eta_{n_k}$ replaced by $u_{n_k}$.
\item[(A4)] $\|\nabla^N u^N(0,x)\| \le C_0 K \, \big(\!=C_0K(N)\big)$ 
for some $C_0>0$, 
where $\nabla^N u(x) := \{N(u(x+e_i)-u(x))\}_{i=1}^d$ with the unit vectors
 $e_i\in \Z^d$ of the direction $i$ and $\|\cdot\|$ stands for the standard Euclidean norm of $\R^d$.
\item[(A5)] $u^N(0,v), v\in \T^d$ defined by \eqref{eq:2.uN-m} from
$u^N(0,x)$ satisfies the bound \eqref{eq:3.3-com} at $t=0$.
\end{itemize}

The condition (A5) implies that a smooth hypersurface $\Ga_0$ in $\T^d$ without
boundary exists and $u^N(0,x)$ converges to $\chi_{\Ga_0}(v)$ weakly in $L^2(\T^d)$
as $N\to\infty$, see \eqref{eq:4.5-A} taking $t=0$.   We denote for a closed hypersurface $\Ga$ 
which separates $\T^d$ into two disjoint regions,
\begin{equation}  \label{eq:1.chiga}
\chi_{\Ga}(v) := \left\{
\begin{aligned}
\a_1,& \quad \text{ for } v \text{ on one side of } \Ga, \\
\a_2,& \quad \text{ for } v \text{ on the other side of } \Ga.
\end{aligned}
\right.
\end{equation}
It is known that a smooth family of closed hypersurfaces $\{\Ga_t\}_{t\in[0,T]}$ in 
$\T^d$, which starts from $\Ga_0$ and evolves being governed by the motion by mean
curvature \eqref{eq:4.MMC}, exists until some time $T>0$; recall $d \ge 2$ and
see the beginning of Section \ref{sec:4.4} for details.  Note that the sides of $\Ga_t$ in
\eqref{eq:1.chiga} with $\Ga=\Ga_t$ is kept under the time evolution and
determined continuously from those of $\Ga_0$.  We need the smoothness 
of $\Ga_t$ to construct super and sub solutions of the discretized hydrodynamic
equation in Theorem \ref{thm:3.4}.

Let $\mu_0^N$ be the distribution of $\eta^N(0)$ on $\mathcal{X}_N$
and let $\nu_0^N$ be the Bernoulli measure on $\mathcal{X}_N$ with 
mean $u^N(0) = \{u^N(0,x)\}_{x\in \T_N^d}$.  Recall that $u^N(0,x)$ is the mean of 
$\eta_x$ under $\mu_0^N$ for each $ x\in \T_N^d$. Our another condition
with $\de>0$ is the following.

\begin{itemize}
\item[(A6)$_\de$] The relative entropy at $t=0$ defined by
\eqref{eq:2.RE} behaves as $H(\mu_0^N|\nu_0^N) = O(N^{d-\de_0})$ 
as $N\to\infty$ with some $\de_0>0$ and $K=K(N) \to\infty$ satisfies
$1\le K(N) \le \de (\log N)^{1/2}$.
\end{itemize}

The main result of this paper is now stated as follows.
Recall that $\a^N(t)$ is defined by \eqref{eq:aNt}.

\begin{theorem} \label{thm:4.10}
Assume the six conditions {\rm (A1)--(A6)}$_\de$ with $\de>0$ small enough 
chosen depending on $T$.  Then, we have
\begin{equation}  \label{eq:thm1.1}
\lim_{N\to\infty} P\left( \left|\lan \a^N(t),\fa\ran - \lan \chi_{\Ga_t},\fa\ran \right|>\e\right) =0,
\quad t\in [0,T],
\end{equation}
for every $\e>0$ and $\fa\in C^\infty(\T^d)$, where
$\lan\a,\fa\ran$ or $\lan\chi_\Ga,\fa\ran$ 
denote the integrals on $\T^d$ of $\fa$ with respect to the measures $\a$ or 
$\chi_\Ga(v)dv$, respectively.
\end{theorem}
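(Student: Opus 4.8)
The plan is to pass directly from the microscopic dynamics to the macroscopic interface motion by combining Yau's relative entropy method with a comparison (super/sub-solution) argument, treating the hydrodynamic and sharp interface limits simultaneously. The starting point is to introduce, for each $t\in[0,T]$, the Bernoulli product reference measure $\nu_t^N$ on $\mathcal{X}_N$ whose mean $u^N(t,x)$ solves the discretized hydrodynamic equation
\begin{equation*}
\partial_t u^N(t,x) = \De^N u^N(t,x) + K f(u^N(t,x)), \quad x\in\T_N^d,
\end{equation*}
where $\De^N$ is the discrete Laplacian and $f$ is the reaction term \eqref{eq:4.f}. This is the natural microscopic counterpart of the Allen-Cahn equation \eqref{eq:RD-2}, and the profile is started from the given initial mean $u^N(0,x)$, so that by (A4)--(A5) it inherits the needed regularity and initial closeness to $\chi_{\Ga_0}$.

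The first main step is to prove, via the relative entropy method (in the sharpened form used for reaction-diffusion limits), that $H(\mu_t^N|\nu_t^N)=o(N^d)$ uniformly on $[0,T]$. I would differentiate $H(\mu_t^N|\nu_t^N)$ in time, bound its derivative by the entropy production, and split the result into the Kawasaki and Glauber contributions. The Kawasaki part, speeded up by $N^2$, is absorbed by the Dirichlet form once the error coming from the discrete Laplacian is matched against $\De^N u^N$. The Glauber part, speeded up by $K$, is arranged so that its leading term cancels $K f(u^N)$ by the choice of $\nu_t^N$; the remaining fluctuation of the local functions $c^\pm$ is controlled through exponential moment (large deviation) estimates under $\nu_t^N$, which produce factors of order $e^{cK^2}$. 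It is precisely here that the hypothesis $K\le\de(\log N)^{1/2}$ in (A6)$_\de$ enters: it keeps these factors of size $N^{o(1)}$, so that a Gronwall inequality for $H(\mu_t^N|\nu_t^N)/N^d$ can be closed starting from the initial bound $O(N^{d-\de_0})$.

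Once the entropy is controlled, the Csisz\'ar--Kullback--Pinsker inequality together with the local averaging of $\eta_x$ yields, for every $\fa\in C^\infty(\T^d)$,
\begin{equation*}
\lan\a^N(t),\fa\ran-\frac1{N^d}\sum_{x\in\T_N^d}u^N(t,x)\,\fa(x/N)\longrightarrow 0
\end{equation*}
in probability, since small relative entropy with respect to the product measure $\nu_t^N$ of mean $u^N$ controls the fluctuations of $\lan\a^N(t),\fa\ran$ around its $\nu_t^N$-mean. It then remains to identify the deterministic limit on the right, i.e. to carry out the sharp interface limit for the discrete profile $u^N(t,\cdot)$. For this I would invoke Theorem~\ref{thm:3.4}: using the smoothness of the mean-curvature flow $\{\Ga_t\}_{t\in[0,T]}$ and the standing-wave solution associated with the balanced bistable $f$ (balance condition in (A2)), one constructs super- and sub-solutions of the discretized equation that sandwich $u^N(t,x)$ and both converge weakly in $L^2(\T^d)$ to $\chi_{\Ga_t}$ as $N\to\infty$, cf.\ \eqref{eq:4.5-A}; the monotonicity in (A3) supplies the comparison principle needed for this sandwiching. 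Combining the two steps gives \eqref{eq:thm1.1}.

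The hard part is the relative entropy estimate in the regime of a large reaction term. In contrast to the classical limit with $O(1)$ Glauber rate, the reaction is amplified by $K\to\infty$, so a crude bound on the Glauber entropy production diverges. The delicate point is to extract enough cancellation from the choice of $\nu_t^N$ so that only a centered fluctuation of $c^\pm$ survives, and to control this fluctuation through exponential moments; this is exactly what forces the scaling $K=O((\log N)^{1/2})$ and renders the hydrodynamic and sharp interface limits compatible within a single argument.
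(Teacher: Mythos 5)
Your strategy is the paper's strategy: build a Bernoulli reference measure $\nu_t^N$ whose mean solves a discretized hydrodynamic equation, prove $H(\mu_t^N|\nu_t^N)=o(N^d)$ by the relative entropy method using $K\le\de(\log N)^{1/2}$ and Gronwall, transfer this to the empirical measure, and finish with the sharp interface limit for $u^N$ via super/sub-solutions (Theorem \ref{thm:3.4}, Theorem \ref{prop:4.4-2}). However, two of your steps have genuine gaps as written.

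First, you define $u^N$ by $\partial_t u^N(t,x)=\De^N u^N(t,x)+Kf(u^N(t,x))$ with $f$ the macroscopic reaction term \eqref{eq:4.f} evaluated pointwise. The exact cancellation you then invoke (``its leading term cancels $Kf(u^N)$ by the choice of $\nu_t^N$'') holds only for the paper's choice \eqref{eq:HD-discre}, whose reaction term is the \emph{nonlocal} discrete function $f^N(x,u)=(1-u_x)c_x^+(u)-u_xc_x^-(u)$ of \eqref{eq:f^N}: by Lemma \ref{lem:4.5} and Corollary \ref{Cor:2.6}, the first-order-in-$\om$ part of $L_G^{*,\nu_t^N}\mathbf 1$ is $\sum_x f^N(x,u)\om_x$, not $\sum_x f(u_x)\om_x$. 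With your equation a residual first-order term $K\sum_x\{f^N(x,u)-f(u_x)\}\om_x$ survives; its coefficients are $O(K^2/N)$ per site (via the gradient bound of Proposition \ref{prop:nabla-u}), so it could probably be absorbed by entropy plus concentration, but you neither notice it nor estimate it. Moreover, the sandwiching result Theorem \ref{thm:3.4} that you cite is proved for the equation with $f^N$, not for yours, so the PDE part would also have to be redone for your choice.

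Second, and more seriously, the Csisz\'ar--Kullback--Pinsker inequality cannot perform the transfer you assign to it. CKP gives $\|\mu_t^N-\nu_t^N\|_{TV}\le\sqrt{H(\mu_t^N|\nu_t^N)/2}$, and an entropy bound $o(N^d)$ is far too weak to make this small: total variation is at most $1$, and with entropy of order $N^{d-\de_0}$ the two measures can be at total variation distance essentially $1$ (take $\mu_t^N$ to be $\nu_t^N$ conditioned on an event of probability $e^{-N^{d-\de_0}}$). What actually closes the argument --- and what the paper does --- is the combination of the entropy inequality $\mu(\mathcal A)\le\{\log 2+H(\mu|\nu)\}/\log\{1+1/\nu(\mathcal A)\}$ with the fact that the deviation event $\mathcal A_{N,t}^\e$ has probability at most $e^{-CN^d}$ under the product measure $\nu_t^N$ (Proposition \ref{prop:4.4-1}, proved by an exponential Chebyshev bound); then $\mu_t^N(\mathcal A_{N,t}^\e)\le o(N^d)/(CN^d)\to 0$, which is \eqref{eq:4.18}. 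The point is that entropy $o(N^d)$ only controls events that are exponentially small at the volume scale $N^d$ under the reference measure; CKP, even applied blockwise to marginals, requires entropy $o(1)$ per block, which you do not have. Your remark that ``small relative entropy controls the fluctuations'' is the right intuition, but the tool you name would fail, and the missing ingredient is precisely the $e^{-CN^d}$ concentration estimate under $\nu_t^N$.
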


The proof of Theorem \ref{thm:4.10} consists of two parts, 
that is, the probabilistic part in Sections \ref{sec:4.3} and \ref{sec:3}, and
the PDE part in Section \ref{sec:4.4}.
In the probabilistic part, we apply the relative entropy method of 
Jara and Menezes \cite{JM1}, \cite{JM2}, which is in a sense 
a combination of the methods due to Guo et al.\ 
\cite{GPV} and Yau \cite{Y}.  In the PDE part, we show the convergence
of solutions of the discretized hydrodynamic equation \eqref{eq:HD-discre} 
with the limit governed by the motion by mean curvature.
More precise rate of convergence in \eqref{eq:thm1.1} is given in
Remark \ref{rem:3.1}.

We give some explanation for our conditions.
If we take $a^+=32, b^+=0, c^+=3, a^-=0, b^-=-16, c^-=19$ in Example \ref{exa:4.1}, we have
$c^+(u) =32u_{n_1}u_{n_2}+3$, $c^-(u) = -16u_{n_1}+19$ and $f(\rho)$ has the
form \eqref{eq:4.bistable} with $C=32$, $\a_1=1/4$, $\a_*=1/2$ and 
$\a_2=3/4$ so that the conditions (A1)--(A3) are satisfied.
For simplicity, we discuss in this paper $c^\pm(\eta)$ of the form \eqref{eq:1.cpm}
only, however one can generalize our result to more general $c^\pm(\eta)$
given as in \eqref{eq:cpm}.  The corresponding $f$ may have several zeros,
but we may restrict our arguments in the PDE part to a subinterval of $[0,1]$,
on which the conditions (A2) and (A3) are satisfied.
The entropy condition in (A6)$_\de$ is satisfied, for example, if 
$d\mu_0^N = g_Nd\nu_0^N$ and $\log \|g_N\|_\infty \le C N^{d-\de_0}$
holds for some $C>0$.

In the probabilistic part, we only need the following condition weaker
than (A5).

\begin{itemize}
\item[(A7)] $u_- \le u^N(0,x) \le u_+$ for some $0<u_-<u_+<1$.
\end{itemize}

For convenience, we take $u_\pm$ such that $0<u_-<\a_1<\a_2<u_+<1$
by making $u_-$ smaller and $u_+$ larger if necessary; 
see the comments given below Theorem \ref{prop:4.4-2}.  The condition
(A7) with this choice of $u_\pm$ is called (A7)$'$.
Under this choice of $u_\pm$, the condition (A3) can be weakened and it is 
sufficient if it holds for $u\in [u_-,u_+]^2$.  
The conditions (A1), (A4), (A6)$_\de$, (A7) are used in the probabilistic part, while
(A2), (A3), (A5) are used in the PDE part.  To be precise, (A2), (A3) are
used also in the probabilistic part but in a less important way; see
the comments below Theorem \ref{thm:EstHent}.

The derivation of the motion by mean curvature and the related 
problems of pattern formation in interacting particle systems were 
discussed by Spohn \cite{S93} rather heuristically, and by
De Masi et al.\ \cite{DPPV94}, Katsoulakis and Souganidis 
\cite{KS94}, Giacomin \cite{G95} for Glauber-Kawasaki dynamics. 
De Masi et al.\ \cite{DOPT93}, 
\cite{DOPT94}, Katsoulakis and Souganidis \cite{KS95} studied
Glauber dynamics with Kac type long range mean field interaction.
Related problems are discussed by
Caputo et al.\ \cite{CMST}, \cite{CMT}.
Similar idea is used in Hern\'andez et al.\ \cite{HJV}
to derive the fast diffusion equation from zero-range processes.
Bertini et al.\ \cite{BBP} discussed from the viewpoint
of large deviation functionals.

In particular, the results of \cite{KS94} are close to ours.  
They consider the Glauber-Kawasaki dynamics with generator
$\la^{-2}(\e^{-2}L_K+L_G)$ under the spatial scaling $(\la\e)^{-1}$,
where $\la=\la(\e) \, (\downarrow 0)$ should satisfy the condition
$\lim_{\e\downarrow 0} \e^{-\zeta^*}\la(\e) = \infty$ with some
$\zeta^*>0$.  If we write $N=(\la\e)^{-1}$ as in our case, the
generator becomes $N^2L_K+\la^{-2}L_G$ so that $\la^{-2}$ plays a role 
similar to our $K=K(N)$.  They analyze the limit of correlation functions.

On the other hand, our analysis makes it possible to study the limit
of the empirical measures, which is more natural in the study of the
hydrodynamic limit, under a milder assumption on the initial distribution
$\mu_0^N$.  Moreover, we believe that our relative entropy method
has an advantage to work for a wide class of models in parallel.
Furthermore, this method is applicable to study the fast-reaction limit 
for two-component Kawasaki dynamics, which leads to the two-phase 
Stefan problem, see \cite{DFPV}.

Finally, we make a brief comment on the case that $f$ is unbalanced:
$\int_{\a_1}^{\a_2}f(\rho)d\rho\not=0$.  For such $f$, the proper time scale is
shorter and turns out to be $K^{-1/2}t$, so that the equation \eqref{eq:RD-2}
is rescaled as
\begin{equation*} 
\partial_t\rho= K^{-1/2}\De\rho+K^{1/2}f(\rho), \quad v\in \T^d.
\end{equation*}
It is known that this equation exhibits a different behavior in the sharp
interface limit as $K\to\infty$, see p.\ 95 of \cite{F16}.  The present paper
does not discuss this case.

\section{Relative entropy method}  \label{sec:4.3}

We start the probabilistic part by formulating Theorem \ref{thm:EstHent}.
This gives an estimate on the relative entropy of our system with
respect to the local equilibria and implies the weak law of large numbers 
\eqref{eq:4.18} as its consequence.  We compute the time derivative of the relative
entropy to give the proof of Theorem \ref{thm:EstHent}.
In Sections 2 and 3, it is unnecessary to assume $d\ge 2$, so that we discuss for all
$d\ge 1$ including $d=1$.

\subsection{The entropy estimate}

From \eqref{eq:c+-}, the flip rate $c_x(\eta) \equiv c(\t_x\eta)$
of the Glauber part has the form
\begin{equation} \label{eq:G-rate}
c_x(\eta) = c_x^+(\eta)(1-\eta_x) + c_x^-(\eta) \eta_x,
\end{equation}
where $c_x^\pm(\eta)= c^\pm(\t_x\eta)$ with $c^\pm(\eta)$ of the form \eqref{eq:cpm}.
Let $u^N(t) = \{u^N(t,x)\in [0,1]\}_{x\in \T_N^d}$
be the solution of the discretized hydrodynamic equation:
\begin{equation} \label{eq:HD-discre}
\partial_t u^N(t,x) = \De^N u^N(t,x) + K f^N(x,u^N(t)), \quad x\in \T_N^d,
\end{equation}
where $f^N(x,u)$ is defined by
\begin{equation} \label{eq:f^N}
f^N(x,u) = (1-u_x)c_x^+(u)-u_xc_x^-(u),
\end{equation}
for $u\equiv \{u_x = u(x)\}_{x\in \T_N^d}$ and
$$
\De^N u(x) := N^2 \sum_{y\in \T_N^d:|y-x|=1} \left(u(y) - u(x)\right)
=N^2 \sum_{i=1}^d \left(u(x+e_i) + u(x-e_i) - 2u(x)\right).
$$
Note that $c_x^\pm(u):= 
E^{\nu_u}[c^\pm_x(\eta)]$ are given by \eqref{eq:cpm} with $\eta_x$ replaced by
$u_x$ and $\nu_u$ is the Bernoulli measure with non-constant mean $u=u(\cdot)$.
In the following, we assume that $c^\pm(\eta)$ have the form \eqref{eq:1.cpm} and,
in this case, we have
\begin{align*}
f^N(x,u) =& -(a^++a^-)u_xu_{x+n_1}u_{x+n_2}  +a^+u_{x+n_1}u_{x+n_2} \\
&\quad  -(b^++b^-)u_xu_{x+n_1} -(c^++c^-) u_x+b^+ u_{x+n_1} +c^+.
\end{align*}

Let $\mu$ and $\nu$ be two probability measures on $\mathcal{X}_N$.
We define the relative entropy of $\mu$ with respect to $\nu$ by
\begin{equation}  \label{eq:2.RE}
H(\mu|\nu) :=
\int_{\mathcal{X}_N} \frac{d\mu}{d\nu} \log \frac{d\mu}{d\nu} d\nu
\; (\ge 0), 
\end{equation}
if $\mu$ is absolutely continuous with respect to $\nu$,
$H(\mu|\nu) :=\infty$, otherwise.
Let $\mu_t^N$ be the distribution of $\eta^N(t)$ on $\mathcal{X}_N$
and let $\nu_t^N$ be the Bernoulli measure on $\mathcal{X}_N$ with mean
$u^N(t) = \{u^N(t,x)\}_{x\in \T_N^d}$.  The following result plays an essential role
to prove Theorem \ref{thm:4.10}.

\begin{theorem} \label{thm:EstHent}
We assume the conditions {\rm (A1)--(A4)} and {\rm (A7)}$'$.
Then, if {\rm (A6)}$_\de$ holds with small enough $\de>0$, we have
$$
H(\mu_t^N|\nu_t^N) = o(N^d), \quad t\in [0,T],
$$
as $N\to\infty$. 
The constant $\de>0$ depends on $T>0$.
\end{theorem}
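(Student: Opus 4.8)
The plan is to run Yau's relative entropy scheme in the form developed by Jara and Menezes, carrying the growing reaction strength $K=K(N)$ explicitly through every estimate. Write $H_N(t):=H(\mu_t^N|\nu_t^N)$, $f_t^N:=d\mu_t^N/d\nu_t^N$, and let $\psi_t^N$ be the density of $\nu_t^N$ with respect to the counting measure on $\mathcal{X}_N$. The starting point is Yau's inequality
\begin{equation*}
\frac{d}{dt}H_N(t)\;\le\; -N^2 D_K\big(\sqrt{f_t^N};\nu_t^N\big)+\int_{\mathcal{X}_N} V_t^N\, f_t^N\, d\nu_t^N,
\end{equation*}
where $D_K(\cdot;\nu_t^N)\ge 0$ is the Kawasaki Dirichlet form (kept to absorb gradients later) and $V_t^N:=(N^2L_K^*\psi_t^N+KL_G^*\psi_t^N)/\psi_t^N-\partial_t\log\psi_t^N$. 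Since $\nu_t^N$ is the inhomogeneous product Bernoulli measure with mean $u^N(t,\cdot)$, a direct computation expresses $V_t^N$ through discrete gradients of $\lambda^N(t,x):=\log\frac{u^N(t,x)}{1-u^N(t,x)}$ and through the local reaction functions $c_x^\pm$ fixed by (A1). The crucial point is that $u^N(t)$ solves \eqref{eq:HD-discre}: the $N^2L_K^*$ part contributes $\De^N u^N$, the $KL_G^*$ part contributes $Kf^N(x,u^N(t))$ via \eqref{eq:f^N}, and both are cancelled to leading order by $\partial_t\log\psi_t^N=\partial_t u^N$. This cancellation is the whole reason the reference measure is coupled to \eqref{eq:HD-discre}.

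After the cancellation, $V_t^N$ is a sum over $x\in\T_N^d$ of centered local functions, with two sources. The Kawasaki part leaves a second-order gradient term bounded deterministically by $N^2\sum_{\mathrm{bonds}}|\lambda^N(t,x)-\lambda^N(t,y)|^2$; under the a priori bound $\|\nabla^N u^N(t,\cdot)\|\le CK$ on $[0,T]$, which follows from (A4) and \eqref{eq:HD-discre}, together with the uniform ellipticity $u_-\le u^N\le u_+$ from (A7)$'$, this is of size $O(N^d K^2)$. The Glauber part, through the Taylor expansion of the explicit polynomial $f^N(x,\cdot)$ of \eqref{eq:1.cpm} around $u^N(t)$, leaves quadratic and cubic monomials in the centered variables $\bar\eta_x:=\eta_x-u^N(t,x)$, each carrying the prefactor $K$; monotonicity (A3) and (A7)$'$ keep all coefficients bounded uniformly in $N$.

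To close the estimate I would bound the fluctuation integral by the entropy itself: by the entropy inequality, for any $\b>0$,
\begin{equation*}
\int_{\mathcal{X}_N} V_t^N\, f_t^N\, d\nu_t^N\;\le\;\frac1\b H_N(t)+\frac1\b\log E^{\nu_t^N}\!\big[e^{\b V_t^N}\big].
\end{equation*}
The Kawasaki gradient terms are first summed by parts and absorbed into $-N^2 D_K$ via Young's inequality. For the remaining centered Glauber monomials, which are sums over coordinates of finite dependence range, I would estimate $\log E^{\nu_t^N}[e^{\b V_t^N}]$ by an equivalence-of-ensembles/concentration bound for inhomogeneous Bernoulli measures, keeping track of the $K$ inside $V_t^N$. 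Choosing $\b$ proportional to $K^{-2}$ turns this into a Gronwall-type differential inequality
\begin{equation*}
\frac{d}{dt}H_N(t)\;\le\; C K^2\, H_N(t)+R_N(t),
\end{equation*}
with $C$ independent of $N$ and a remainder $R_N$ that the refined (second-order) replacement renders $o(N^d)$ uniformly on $[0,T]$. Integrating gives $H_N(t)\le e^{CK^2 t}\big(H_N(0)+\int_0^t R_N\big)$.

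The main obstacle is precisely this $K$-dependence. The Gronwall rate grows like $K^2$, producing a factor $e^{CK^2T}$ over $[0,T]$; the hypothesis $K(N)\le\de(\log N)^{1/2}$ in (A6)$_\de$ gives $K^2\le\de^2\log N$, hence $e^{CK^2T}\le N^{C\de^2T}$, a small power of $N$. Combined with the initial bound $H_N(0)=O(N^{d-\de_0})$ from (A6)$_\de$, this yields $H_N(t)=O\big(N^{\,d-\de_0+C\de^2T}\big)$, and choosing $\de>0$ small enough that $C\de^2T<\de_0$ (this is exactly the dependence of $\de$ on $T$) pushes the exponent strictly below $d$, so $H_N(t)=o(N^d)$ as claimed. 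The delicate part of the work is therefore to carry the exponential-moment and replacement estimates sharply enough that the degradation of constants is only polynomial in $K$, so that the slow growth $K^2\le\de^2\log N$ can absorb it and the initial entropy estimate propagates to all $t\in[0,T]$.
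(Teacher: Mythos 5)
Your skeleton matches the paper's: Yau's inequality with the product Bernoulli reference measure tied to the discretized hydrodynamic equation \eqref{eq:HD-discre}, the exact cancellation of the terms linear in the centered variables $\om_x$, a Gronwall inequality with rate $CK^2$, and the final bookkeeping $e^{CK^2T}\le N^{C\de^2T}$ with $\de$ chosen small depending on $T$. The gap is in the one step where all the real work lies: the bound on the surviving quadratic and cubic fluctuation terms. What is needed is a bound of the form $\int V f\,d\nu \le \e_0 N^2\mathcal{D}_K(\sqrt f;\nu)+CK^2\big(H(\mu|\nu)+N^{d-1+\k}\big)$, i.e.\ with an additive error a negative power of $N$ \emph{below} the volume $N^d$; your scheme produces an additive error of order $N^d$, and that is fatal. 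Indeed, take $V=K\sum_x a_x\om_x\om_{x+n_1}$ with $a_x$ bounded: the exponential-moment computation you describe gives $\log E^{\nu}[e^{\b V}]\le C\b^2K^2N^d$ (Hoeffding, after splitting into finitely many independent subsums), hence $\int Vf\,d\nu\le \frac1\b H+C\b K^2N^d$; with your choice $\b\sim K^{-2}$ this is $CK^2H+C'N^d$, and optimizing over $\b$ instead gives $CK\sqrt{HN^d}$, leading to $H(t)\lesssim H(0)+K^2t^2N^d$. In either case the remainder $R_N$ is of order $N^d$ with a constant that does not vanish, and after Gronwall it is multiplied by $e^{CK^2T}=N^{C\de^2T}\to\infty$, so $o(N^d)$ is out of reach. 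Your sentence that ``the refined (second-order) replacement renders $R_N$ $o(N^d)$'' is therefore not a step of the proof; it \emph{is} the proof, and it is missing.

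What fills this gap in the paper (Sections \ref{sec:3}) is the block-average replacement of Jara--Menezes. Each product $\om_x\om_{x+n_1}$ is replaced by a product of left/right sample averages over boxes of side $\ell=N^{(1-\k)/d}$. The replacement cost $V_a-V_a^\ell$ is written as a telescopic sum along bonds via the flow lemma (Lemma \ref{lem:flow}), converted by integration by parts for inhomogeneous Bernoulli measures (Lemmas \ref{lem:IP-0}, \ref{lem:2.13}, \ref{lem:3.5-b}) into pieces of the Kawasaki Dirichlet form --- this is where the factor $N^2$ in front of $\mathcal{D}_K$ is actually spent, with the choice $\b=\e_0N^2/K$ --- plus errors controlled by the concentration inequality (Lemma \ref{lem:ci}). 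The averaged term $V_a^\ell$, whose block averages fluctuate at the CLT scale $\ell^{-d/2}$, admits exponential moments at scale $\ga\sim\ell^d$, so the entropy inequality yields $\int V_a^\ell f\,d\nu\le CK\big(H(\mu|\nu)+N^d/\ell^d\big)=CK\big(H(\mu|\nu)+N^{d-1+\k}\big)$; this gain of $N^{-1+\k}$ over the volume is exactly what survives multiplication by $N^{C\de^2T}$. Note also that your treatment of the Kawasaki remainder $V_1=-\frac{N^2}2\sum(u_y-u_x)^2\om_x\om_y$ is not right as stated: it cannot be ``absorbed into $-N^2\mathcal{D}_K$ via Young's inequality'' (it is a plain integral against $f\,d\nu$, not a gradient of $f$), and the deterministic bound $O(N^dK^2)$ is useless since $K\to\infty$. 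In the paper $V_1$ is handled by the identical replacement scheme, using (A4) and Proposition \ref{prop:nabla-u} to get $N^2(u_y-u_x)^2\le CK^2$, which gives the same bound as for $V_a$ with prefactor $K^2$ in place of $K$; this is the true origin of the Gronwall rate $CK^2$.
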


Note that the condition (A7)$'$, i.e.\ (A7) with an additional condition 
on the choice of $u_\pm$, combined with the
comparison theorem implies that the solution $u^N(t,x)$ of the 
discretized hydrodynamic equation \eqref{eq:HD-discre} satisfies that 
$u_- \le u^N(t,x) \le u_+$ for all $t\in [0,T]$ and $x\in \T_N^d$;
see the comments given below Theorem \ref{prop:4.4-2}.
The conditions (A2) and (A3) are used only to show this bound for $u^N(t,x)$.

\subsection{Consequence of Theorem \ref{thm:EstHent}}

We define the macroscopic function $u^N(t,v), v\in\T^d$
associated with the microscopic function $u^N(t,x), x\in \T_N^d$ as
a step function
\begin{equation}  \label{eq:2.uN-m}
u^N(t,v) = \sum_{x\in\T_N^d} u^N(t,x) 1_{B(\frac xN,\frac 1N)}(v), 
\quad v \in \T^d,
\end{equation}
where $B(\frac{x}N,\frac1N) = \prod_{i=1}^d [\frac{x_i}N-\frac1{2N}, 
\frac{x_i}N+\frac1{2N})$ is the box with center $x/N$ and side 
length $1/N$.  Then the entropy inequality (see Proposition A1.8.2 of \cite{KL}
or Section 3.2.3 of \cite{F18})
$$
\mu_t^N(\mathcal A) \le \frac{\log 2 + H(\mu_t^N|\nu_t^N)}{\log \{1+1/\nu_t^N(\mathcal A)\}},
\quad \mathcal A\subset \mathcal X_N,
$$
combined with Theorem \ref{thm:EstHent} and Proposition \ref{prop:4.4-1} stated below shows that
\begin{equation}  \label{eq:4.18}
\lim_{N\to\infty}\mu_t^N ({\cal A}_{N,t}^\e)=0,
\end{equation}
for every $\e>0$, where
\[ 
{\cal A}_{N,t}^\e\equiv {\cal A}_{N,t,\fa}^\e := \left\{\eta\in{\cal X}_N \,; ~\left|
\lan \a^N,\fa\ran   - \lan u^N(t,\cdot),\fa\ran\right| > \e\right\},
\quad \fa\in C^\infty(\T^d).
\]

\begin{prop}  \label{prop:4.4-1}
There exists $C=C_\e>0$ such that
$$
\nu_t^N ({\cal A}_{N,t}^\e) \le e^{-CN^d}.
$$
\end{prop}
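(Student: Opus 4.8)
The plan is to use that under the reference measure $\nu_t^N$ the occupation variables $\{\eta_x\}_{x\in\T_N^d}$ are \emph{independent} Bernoulli random variables with means $u^N(t,x)$, so that $\lan\a^N,\fa\ran$ is an average of $N^d$ independent uniformly bounded quantities and hence concentrates around its mean on the exponential scale $N^d$.

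First I would separate a deterministic discretization error from a genuine fluctuation. Under $\nu_t^N$ we have $E^{\nu_t^N}[\lan\a^N,\fa\ran]=N^{-d}\sum_{x\in\T_N^d}u^N(t,x)\fa(x/N)=:m_t^N$, whereas the target is $\lan u^N(t,\cdot),\fa\ran=\sum_x u^N(t,x)\int_{B(x/N,1/N)}\fa(v)\,dv$. Since $\fa\in C^\infty(\T^d)$, a Taylor expansion of $\fa$ on each box of side $1/N$ gives $\big|N^{-d}\fa(x/N)-\int_{B(x/N,1/N)}\fa(v)\,dv\big|\le C_\fa N^{-d-1}$; summing over the $N^d$ boxes and using $0\le u^N(t,x)\le 1$ yields $|m_t^N-\lan u^N(t,\cdot),\fa\ran|\le C_\fa N^{-1}$, uniformly in $t\in[0,T]$. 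Hence for all $N$ large this error is below $\e/2$, and $\mathcal A_{N,t}^\e\subset\{|\lan\a^N,\fa\ran-m_t^N|>\e/2\}$.

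It then remains to control the centered average. Set $X_x:=(\eta_x-u^N(t,x))\fa(x/N)$; under $\nu_t^N$ these are independent with mean zero, and each $X_x$ takes two values spanning an interval of length $|\fa(x/N)|\le\|\fa\|_\infty$. By Hoeffding's inequality (equivalently, an exponential Chebyshev bound in which independence lets the moment generating function factorize and the tilt parameter is optimized),
\[
\nu_t^N\Big(\big|\textstyle\sum_x X_x\big|>\tfrac\e2N^d\Big)\le 2\exp\Big(-\frac{2(\e N^d/2)^2}{\sum_x\fa(x/N)^2}\Big)\le 2\exp\Big(-\frac{\e^2 N^d}{2\|\fa\|_\infty^2}\Big),
\]
and absorbing the factor $2$ for $N$ large gives the assertion with, say, $C_\e=\e^2/(4\|\fa\|_\infty^2)$.

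I do not expect a genuine obstacle: this is a routine product-measure concentration estimate. The two points needing care are that the reference measure is exactly the Bernoulli product $\nu_t^N$, so that independence and the factorization of the moment generating function are legitimately available, and that the resulting bound be uniform in $t\in[0,T]$ --- which holds because the Hoeffding constant depends only on $\|\fa\|_\infty$ and on the uniform boundedness of $\eta_x$, not on the particular profile $u^N(t,\cdot)$. The discretization comparison in the first step likewise needs nothing beyond the smoothness of $\fa$ and $0\le u^N\le 1$.
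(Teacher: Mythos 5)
Your proof is correct and follows essentially the same route as the paper: both exploit the product structure of $\nu_t^N$ to get an exponential (Chernoff-type) bound at scale $N^d$, with the discretization error $\lan u^N(t,\cdot),\fa\ran - N^{-d}\sum_x u^N(t,x)\fa(x/N)$ treated as negligible. The only cosmetic difference is that you invoke Hoeffding's inequality as a packaged lemma, whereas the paper carries out the exponential Chebyshev bound by hand, factorizing the moment generating function and bounding each factor by $1+C\ga^2$ via Taylor's formula --- which is precisely the content of Hoeffding's lemma.
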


\begin{proof}
Set and observe
$$
X:= \lan \a^N,\fa\ran   - \lan u^N(t,\cdot),\fa\ran
= \frac1{N^d} \sum_{x\in\T_N^d} \left\{\eta_x-u^N(t,x)\right\}\fa(\frac{x}N) +o(1),
$$
as $N\to\infty$.  Then, we have
\begin{align*}
\nu_t^N({\cal A}_{N,t}^\e)
& \le e^{-\ga \e N^d} E^{\nu_t^N}[ e^{\ga N^d |X|}] \\
& \le e^{-\ga \e N^d} \left\{E^{\nu_t^N}[ e^{\ga N^d X}] + E^{\nu_t^N}[ e^{-\ga N^d X}] \right\},
\end{align*}
for every $\ga>0$, where we used the elementary inequality $e^{|x|}\le e^x+e^{-x}$
to obtain the second inequality. By the independence of $\{\eta_x\}_{x\in\T_N^d}$ under $\nu_t^N$,
the expectations inside the last braces can be written as
\begin{align*}
E^{\nu_t^N}[ e^{\pm \ga N^d X}]
& = \prod_{x\in\T_N^d} E^{\nu_t^N}[ e^{\pm \ga \{\eta_x-u_x\}\fa_x+o(1)}] \\
&= \prod_{x\in \T_N^d} \left\{ e^{\pm \ga(1-u_x)\fa_x} u_x + e^{\mp \ga u_x \fa_x}(1-u_x)\right\}
+o(1),
\end{align*}
where $u_x = u^N(t,x)$ and $\fa_x = \fa(x/N)$.  Applying the Taylor's formula at $\ga=0$, we see
$$
\left| e^{\pm \ga(1-u_x)\fa_x} u_x + e^{\mp \ga u_x \fa_x}(1-u_x) - 1 \right|
\le  C\ga^2, \quad C=C_{\|\fa\|_\infty},
$$
for $0<\ga\le 1$.  Thus we obtain
$$
\nu_t^N({\cal A}_{N,t}^\e)\le e^{-\ga \e N^d + C\ga^2 N^d},
$$
for $\ga>0$ sufficiently small.  This shows the conclusion.
\end{proof}

\subsection{Time derivative of the relative entropy}
For a function $f$ on $\mathcal{X}_N$ and a measure $\nu$ on $\mathcal{X}_N$,
set 
\begin{equation}  \label{eq:4.DKL}
\mathcal{D}_N(f;\nu) = 2 N^2\mathcal{D}_K(f;\nu) + K\mathcal{D}_G(f;\nu),
\end{equation}
where
\begin{align*}
\mathcal{D}_K(f;\nu)& = \frac14 \sum_{x,y\in \T_N^d: |x-y|=1} \int_{\mathcal{X}_N} \{f(\eta^{x,y})-f(\eta)\}^2 d\nu, \\
\mathcal{D}_G(f;\nu)& = \sum_{x\in\T_N^d} \int_{\mathcal{X}_N} c_x(\eta)\{f(\eta^x)-f(\eta)\}^2 d\nu.
\end{align*}

Take a family of probability measures
$\{\nu_t\}_{t\ge0}$ on $\mathcal{X}_N$ differentiable in $t$ and a probability measure
$m$ on $\mathcal{X}_N$ as a reference measure, and set $\psi_t(\eta) := 
(d\nu_t/dm)(\eta)$.  Assume that these measures have full supports in 
$\mathcal{X}_N$.  We denote the adjoint of an operator $L$ on $L^2(m)$ by $L^{*,m}$ in general.
Then we have the following proposition called Yau's inequality; see Theorem 4.2 of \cite{F18}
or Lemma A.1 of \cite{JM2} for the proof.

\begin{prop} \label{thm:4.2}
\begin{equation} \label{eq:dH}
\frac{d}{dt} H(\mu_t^N|\nu_t) \le - \mathcal{D}_N\left(\sqrt{\frac{d\mu_t^N}{d\nu_t}}; \nu_t\right) 
+ \int_{\mathcal{X}_N} (L^{*,\nu_t} {\bf 1} - \partial_t \log \psi_t) d\mu_t^N,
\end{equation}
where ${\bf 1}$ stands for the constant function ${\bf 1}(\eta)\equiv 1$, $\eta\in\mathcal X_N$.
\end{prop}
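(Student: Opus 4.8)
The plan is to work on the finite state space $\mathcal{X}_N$, where all the densities involved are smooth in $t$ and strictly positive by the full-support assumption, so that no measure-theoretic subtlety arises and every manipulation below is legitimate. First I would fix the reference measure $m$ and write the relative entropy through the densities $f_t := d\mu_t^N/dm$ and $\psi_t = d\nu_t/dm$ as $H(\mu_t^N|\nu_t) = \int_{\mathcal{X}_N} f_t(\log f_t - \log\psi_t)\, dm$. Differentiating in $t$ and using that the total mass $\int f_t\, dm=1$ is constant (hence $\int \partial_t f_t\, dm = 0$), the derivative splits into $\int (\partial_t f_t)\log(f_t/\psi_t)\, dm - \int (\partial_t\log\psi_t)\, d\mu_t^N$, which already isolates the last term of \eqref{eq:dH}.

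Next I would invoke the forward (Kolmogorov) equation $\partial_t f_t = L^{*,m} f_t$ for the law of the process generated by $L = N^2 L_K + K L_G$. Setting $g_t := d\mu_t^N/d\nu_t = f_t/\psi_t$ and moving the adjoint back onto $L$, the first term becomes $\int (L^{*,m}f_t)\log g_t\, dm = \int L(\log g_t)\, d\mu_t^N = \int g_t\, L(\log g_t)\, d\nu_t$, so the whole problem reduces to bounding $\int g_t\, L(\log g_t)\, d\nu_t$ from above by $-\mathcal{D}_N(\sqrt{g_t};\nu_t) + \int (L^{*,\nu_t}\mathbf{1})\, d\mu_t^N$.

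The heart of the argument is a pointwise convexity estimate applied jump by jump. Writing $L$ in the generic form $Lh(\eta) = \sum_\xi c(\eta,\xi)\{h(\xi)-h(\eta)\}$ (with $\xi=\eta^{x,y}$ for the Kawasaki part and $\xi=\eta^x$ for the Glauber part), I would apply $\log y \le 2(\sqrt{y}-1)$ with $y=g_t(\xi)/g_t(\eta)$ and multiply by $g_t(\eta)\ge0$, which after the identity $2\sqrt{g_t(\eta)g_t(\xi)} = g_t(\eta)+g_t(\xi)-(\sqrt{g_t(\xi)}-\sqrt{g_t(\eta)})^2$ yields $g_t(\eta)\{\log g_t(\xi)-\log g_t(\eta)\} \le -(\sqrt{g_t(\xi)}-\sqrt{g_t(\eta)})^2 + (g_t(\xi)-g_t(\eta))$. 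Summing against $c(\eta,\xi)$ and integrating $d\nu_t$, the squared-difference terms assemble into $-\mathcal{D}_N(\sqrt{g_t};\nu_t)$, while the leftover linear terms reassemble into $\int L g_t\, d\nu_t = \int (L^{*,\nu_t}\mathbf{1})\, d\mu_t^N$. Here one must check the bookkeeping of the scaling constants: the factor $2N^2$ in front of $\mathcal{D}_K$ in \eqref{eq:4.DKL} arises precisely from the coefficient $\tfrac12$ in $L_K$ meeting the $\tfrac14$ in the definition of $\mathcal{D}_K$, whereas the Glauber piece carries its prefactor $K$ intact. Combining the three identities gives exactly \eqref{eq:dH}.

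The only genuinely delicate point is handling the non-reversible structure. Because the Glauber part is not reversible with respect to $\nu_t$, and $\nu_t$ is not invariant for $L$, the linear remainder does not vanish and must be carried along, producing the term $\int (L^{*,\nu_t}\mathbf{1} - \partial_t\log\psi_t)\, d\mu_t^N$; keeping the two generator pieces and their prefactors $N^2$ and $K$ correctly matched through the convexity step is where care is required. By contrast, the convexity inequality itself and the finiteness of $\mathcal{X}_N$ make the analytic side entirely routine, so I expect no further obstruction.
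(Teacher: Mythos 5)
Your proposal is correct, and it is essentially the same argument as the proof the paper relies on: the paper proves nothing here itself, deferring to Theorem 4.2 of \cite{F18} and Lemma A.1 of \cite{JM2}, whose proofs are exactly your computation — differentiate $H(\mu_t^N|\nu_t)$ through densities with respect to $m$, insert the forward equation $\partial_t f_t = L^{*,m} f_t$, and apply $\log y \le 2(\sqrt{y}-1)$ jump by jump so that the squared differences of $\sqrt{g_t}$ assemble into $\mathcal{D}_N(\sqrt{g_t};\nu_t)$ while the linear remainders give $\int L^{*,\nu_t}\mathbf{1}\,d\mu_t^N$. Your prefactor bookkeeping is also right: the ordered-pair Kawasaki rate $N^2/2$ matches $2N^2\cdot\tfrac14$ in \eqref{eq:4.DKL}, and the Glauber rate $Kc_x$ matches $K\mathcal{D}_G$ exactly.
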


We apply Proposition \ref{thm:4.2} with $\nu_t=\nu_t^N$ to prove Theorem \ref{thm:EstHent}.

\subsection{Computation of $L_N^{*,\nu_t^N}{\bf 1} - \partial_t \log\psi_t$}

We compute the integrand of the second term in the right hand side of \eqref{eq:dH}.
Similar computations are made in the proofs of Lemma 3.1
of \cite{FUY}, Appendix A.3 of \cite{JM2} and Lemmas 4.4--4.6 of \cite{F18}.
We introduce the centered variable
$\bar\eta_x$ and the normalized centered variable $\om_x$ of $\eta_x$ under the
Bernoulli measure with mean $u(\cdot) = \{u_x\}_{x\in\T_N^d}$ as follows:
\begin{align*}
\bar\eta_x = \eta_x-u_x \quad \text{ and } \quad
\om_x = \frac{\bar\eta_x}{\chi(u_x)},
\end{align*}
where $\chi(\rho) = \rho(1-\rho)$, $\rho\in[0,1]$.  
We first compute the contribution of the Kawasaki part.

\begin{lem}  \label{lem:4.4-q}
Let $\nu= \nu_{u(\cdot)}$ be a Bernoulli measure on $\mathcal{X}_N$
with mean $u(\cdot) = \{u_x\}_{x\in\T_N^d}$.  Then, we have
\begin{equation} \label{eq:3.3-b}
L_K^{*,\nu}{\bf 1} =  -\frac12 \sum_{x,y\in\T_N^d:|x-y|=1}  (u_y-u_x)^2\om_x \om_y
+ \sum_{x\in\T_N^d} (\De u)_x \, \om_x,
\end{equation}
where $(\De u)_x = \sum_{y\in\T_N^d:|y-x|=1} (u_y-u_x)$.
\end{lem}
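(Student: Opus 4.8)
The plan is to reduce the computation of the adjoint to the density ratio of $\nu$ under the exchange map and then verify an exact pointwise identity. First I would use the characterization of the adjoint of the exchange operator. Writing $\Pi_{x,y}f(\eta) = f(\eta^{x,y})$, the map $\eta\mapsto\eta^{x,y}$ is an involution on $\mathcal{X}_N$, so changing variables in the integral over $\nu$ gives $E^\nu[\Pi_{x,y}f] = E^\nu[f\cdot r_{x,y}]$ with $r_{x,y}(\eta) := \nu(\eta^{x,y})/\nu(\eta)$. Since $L_K = \tfrac12\sum_{x,y:|x-y|=1}(\Pi_{x,y}-I)$ and $\Pi_{x,y}$ is self-inverse, this yields
\[
L_K^{*,\nu}{\bf 1} = \frac12\sum_{x,y\in\T_N^d:|x-y|=1}\big(r_{x,y}(\eta)-1\big).
\]

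Next I would compute $r_{x,y}$ explicitly. Because $\nu=\nu_{u(\cdot)}$ is a product measure, only the factors at $x$ and $y$ change under the swap, so $r_{x,y}$ depends on $\eta$ only through $(\eta_x,\eta_y)$ and equals $1$ whenever $\eta_x=\eta_y$. The key step is the exact identity
\[
r_{x,y}(\eta)-1 = -(u_y-u_x)^2\,\om_x\om_y + (u_y-u_x)(\om_x-\om_y),
\]
which I would establish by evaluating both sides at the four configurations $(\eta_x,\eta_y)\in\{0,1\}^2$, using $\om_z=(\eta_z-u_z)/\chi(u_z)$ and $\chi(u_z)=u_z(1-u_z)$. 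For instance, at $(\eta_x,\eta_y)=(1,0)$ both sides reduce to $(u_y-u_x)/\big(u_x(1-u_y)\big)$, while at the diagonal configurations both sides vanish. The point worth stressing is that the normalization by $\chi(u_z)$ built into $\om_z$ is exactly what forces the constant and the spurious linear contributions to cancel, so that this is an exact identity and not merely an expansion to leading order in $u_y-u_x$.

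Finally, I would substitute the identity into the sum above. The quadratic part directly produces $-\tfrac12\sum_{x,y}(u_y-u_x)^2\om_x\om_y$. For the linear part, relabelling $x\leftrightarrow y$ in $\sum_{x,y}(u_y-u_x)\om_y$ shows it coincides with $\sum_{x,y}(u_y-u_x)\om_x$, hence $\tfrac12\sum_{x,y}(u_y-u_x)(\om_x-\om_y) = \sum_{x,y:|x-y|=1}(u_y-u_x)\om_x = \sum_x(\De u)_x\,\om_x$, which is the second asserted term. The main obstacle is verifying the pointwise identity; once the clean quadratic-plus-gradient form is guessed, the case check is routine and the subsequent summation and symmetrization are immediate.
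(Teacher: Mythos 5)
Your proof is correct and takes essentially the same route as the paper: both compute $L_K^{*,\nu}{\bf 1}$ via the change of variables $\eta^{x,y}\mapsto\eta$ and the density ratio $\nu(\eta^{x,y})/\nu(\eta)$ of the product Bernoulli measure, and then reorganize the result in the variables $\om_x$ (the paper expands ${\bf 1}_{\{\eta_x=1,\eta_y=0\}}$ in centered variables rather than verifying your exact pointwise identity on the four configurations, which is a difference of bookkeeping only). One small slip in your symmetrization step: relabelling gives $\sum_{x,y}(u_y-u_x)\om_y=-\sum_{x,y}(u_y-u_x)\om_x$, not $+\sum_{x,y}(u_y-u_x)\om_x$ as you wrote --- this sign is precisely why $\tfrac12\sum_{x,y}(u_y-u_x)(\om_x-\om_y)$ equals $\sum_x(\De u)_x\,\om_x$ rather than zero, so your displayed conclusion is the correct one.
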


\begin{proof}
Take a test function $f$ on $\mathcal{X}_N$ and compute
\begin{align*}
\int_{\mathcal{X}_N} L_K^{*,\nu}{\bf 1} \cdot f d\nu 
& = \int_{\mathcal{X}_N} L_K f d\nu
= \frac12 \sum_{\eta\in \mathcal{X}_N}  \sum_{x,y\in\T_N^d: |x-y|=1}
 \{f(\eta^{x,y})-f(\eta)\} \nu(\eta)  \\
& = \sum_{\eta\in \mathcal{X}_N} \sum_{x,y\in\T_N^d: |x-y|=1} f(\eta) 
\frac{u_y-u_x}{u_x(1-u_y)} {\bf 1}_{\{\eta_x=1,\eta_y=0\}} \nu(\eta),
\end{align*}
where ${\bf 1}_{\mathcal A}$ denotes the indicator function of a set $\mathcal A\subset \mathcal X_N$.
To obtain the second line, we have applied the change of variables $\eta^{x,y}\mapsto\eta$,
and then the identity
$$
\nu(\eta^{x,y}) =  \left\{\frac{(1-u_x)u_y}{u_x(1-u_y)} 1_{\{\eta_x=1,\eta_y=0\}}
+\frac{(1-u_y)u_x}{u_y(1-u_x)} 1_{\{\eta_y=1,\eta_x=0\}}
+ 1_{\{\eta_x=\eta_y\}} \right\} \nu(\eta),
$$
and finally the symmetry in $x$ and $y$.  Since one can rewrite
${\bf 1}_{\{\eta_x=1,\eta_y=0\}}$ as 
$\bar\eta_x(1-\bar\eta_y) -\bar\eta_x u_y - \bar\eta_y u_x + u_x(1 -u_y)$, we have
$$
L_K^{*,\nu}{\bf 1} = \sum_{x,y\in\T_N^d: |x-y|=1} \frac{u_y-u_x}{u_x(1-u_y)} 
\{ \bar\eta_x(1-\bar\eta_y) -\bar\eta_x u_y - \bar\eta_y u_x + u_x(1 -u_y)\}.
$$
However, the sum of the last term vanishes, while
the sum of the second and third terms is computed by exchanging the role of $x$ and $y$ in the
third term and in the end we obtain
\begin{equation} \label{eq:3.3-a}
L_K^{*,\nu}{\bf 1} = \sum_{x,y\in\T_N^d: |x-y|=1} \left\{\frac{u_y-u_x}{u_x(1-u_y)} \bar\eta_x(1-\bar\eta_y)
- \frac{(u_y-u_x)^2}{(1-u_x)u_x(1-u_y)} \bar\eta_x\right\}.
\end{equation}
The right hand side in \eqref{eq:3.3-a} can be further rewritten as
\begin{equation} \label{eq:3.3-b-c}
-\sum_{x,y\in\T_N^d: |x-y|=1} \frac{u_y-u_x}{u_x(1-u_y)} \bar\eta_x \bar\eta_y
+ \sum_{x\in\T_N^d} \frac{(\De u)_x}{\chi(u_x)} \bar\eta_x,
\end{equation}
by computing the coefficient of $\bar\eta_x$ in \eqref{eq:3.3-a} as
\begin{align*}
(u_y-u_x)\left\{\frac{1}{u_x(1-u_y)} - \frac{(u_y-u_x)}{(1-u_x)u_x(1-u_y)} \right\}
 = \frac{u_y-u_x}{\chi(u_x)},
\end{align*}
which gives $(\De u)_x$ by taking the sum in $y$.  Finally, the first term in \eqref{eq:3.3-b-c}
can be symmetrized in $x$ and $y$ and we obtain
\eqref{eq:3.3-b}.
\end{proof}

The following lemma is for the Glauber part.  Recall that the
flip rate $c_x(\eta)$ is given by \eqref{eq:G-rate} with $c^\pm(\eta)$
of the form \eqref{eq:cpm} in general.

\begin{lem}\label{lem:4.5}
The Bernoulli measure 
$\nu=\nu_{u(\cdot)}$ is the same as in Lemma \ref{lem:4.4-q}.  Then, we have
\begin{align} \label{eq:L_G1}
L_G^{*,\nu}{\bf 1} &= \sum_{x\in\T_N^d} \left(\frac{ c_x^+(\eta)}{u_x}-
\frac{c_x^-(\eta)}{1-u_x}\right)\bar\eta_x \\
& = F(\om,u) + \sum_{x\in\T_N^d} f^N(x,u) \om_x,
\notag
\end{align}
where $f^N(x,u)$ is given by \eqref{eq:f^N} and
$$
F(\om,u) = \sum_{x\in \T_N^d} \sum_{\La\Subset\Z^d: |\La|\ge 2} 
c_\La(u_{\cdot+x}) \prod_{y\in\La}\om_{y+x},
$$
with a finite sum in $\La$ with $|\La|\ge 2$ and some local functions $c_\La(u)$ of $u\, (=\{u_x\}_{x\in\Z^d})$
for each $\La$.  In particular, if $c^\pm(\eta)$ have the form \eqref{eq:1.cpm}, we have
\begin{align}  \label{eq:2.Fabc}
F(\om,u) = & \sum_{x\in \T_N^d} a(u_x,u_{x+n_1},u_{x+n_2}) \om_x\om_{x+n_1}
+ \sum_{x\in \T_N^d} b(u_x,u_{x+n_1},u_{x+n_2}) \om_x\om_{x+n_2} \\
& + \sum_{x\in \T_N^d} c(u_x,u_{x+n_1},u_{x+n_2}) \om_x\om_{x+n_1}\om_{x+n_2},  \notag
\end{align}
where $a, b, c$ are shift-invariant bounded functions of $u$ defined by
\begin{align*}
& a(u_x,u_{x+n_1},u_{x+n_2}) = \chi(u_{x+n_1}) [\{ a^+(1-u_x)-a^- u_x\}u_{x+n_2}+b^+(1-u_x)-b^- u_x], \\
& b(u_x,u_{x+n_1},u_{x+n_2}) = \chi(u_{x+n_2}) \{a^+(1-u_x)-a^- u_x\} u_{x+n_1}, \\
& c(u_x,u_{x+n_1},u_{x+n_2}) = \chi(u_{x+n_1}) \chi(u_{x+n_2}) \{a^+(1-u_x)-a^-u_x\},
\end{align*}
respectively.
\end{lem}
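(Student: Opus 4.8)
The plan is to establish the two equalities in \eqref{eq:L_G1} in turn: the first by the same adjoint computation used in Lemma \ref{lem:4.4-q}, and the second by expanding the resulting expression in the normalized centered variables $\om_x$. The conceptual input is minimal; the work is bookkeeping.

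For the first equality I would start from the defining relation $\int_{\mathcal X_N} L_G^{*,\nu}{\bf 1}\cdot f\,d\nu = \int_{\mathcal X_N} L_G f\,d\nu$ for an arbitrary test function $f$, write $L_G f = \sum_x c_x(\eta)\{f(\eta^x)-f(\eta)\}$, and apply the change of variables $\eta^x\mapsto\eta$ in the gain term. The two facts I would exploit are, first, that by \eqref{eq:cpm} the rates $c_x^\pm(\eta)$ do not depend on the occupation variable $\eta_x$, so that $c_x^\pm(\eta^x)=c_x^\pm(\eta)$, which turns $c_x(\eta^x)$ into $c_x^+(\eta)\eta_x + c_x^-(\eta)(1-\eta_x)$; and second, the explicit single-site density ratio $\nu(\eta^x)/\nu(\eta)$ for $\nu=\nu_{u(\cdot)}$, which equals $(1-u_x)/u_x$ when $\eta_x=1$ and $u_x/(1-u_x)$ when $\eta_x=0$. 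Collecting the coefficient of $f(\eta)\nu(\eta)$ and summing over $x$, the terms quadratic in $\eta_x$ collapse via $\eta_x^2=\eta_x$, and a short simplification using $\chi(u_x)=u_x(1-u_x)$ yields exactly $\sum_x\big(c_x^+(\eta)/u_x - c_x^-(\eta)/(1-u_x)\big)\bar\eta_x$.

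For the second equality I would use $\bar\eta_x=\chi(u_x)\om_x$, so that $\bar\eta_x/u_x=(1-u_x)\om_x$ and $\bar\eta_x/(1-u_x)=u_x\om_x$, rewriting the first line as $\sum_x[c_x^+(\eta)(1-u_x)-c_x^-(\eta)u_x]\om_x$. Then I would split each rate as $c_x^\pm(\eta)=c_x^\pm(u)+(c_x^\pm(\eta)-c_x^\pm(u))$, where $c_x^\pm(u)$ is the $\nu$-mean. The mean parts contribute precisely $\sum_x f^N(x,u)\om_x$ by the definition \eqref{eq:f^N} of $f^N$. For the fluctuation parts I would substitute $\eta_{x+y}=u_{x+y}+\chi(u_{x+y})\om_{x+y}$ into the monomials of $c_x^\pm$ and expand; since by \eqref{eq:cpm} these monomials are supported away from the origin, the constant term reproduces $c_x^\pm(u)$ and every surviving fluctuation monomial carries $\om_x$ together with at least one further $\om_{x+y}$ with $y\neq 0$. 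Hence each has degree $|\La|\ge 2$ in $\om$, giving the stated form of $F(\om,u)$ with shift-invariant coefficients $c_\La(u_{\cdot+x})$.

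Finally, for the explicit rates \eqref{eq:1.cpm} I would carry out the expansion directly: writing $\eta_{x+n_1}\eta_{x+n_2}=(u_{x+n_1}+\chi(u_{x+n_1})\om_{x+n_1})(u_{x+n_2}+\chi(u_{x+n_2})\om_{x+n_2})$, grouping the fluctuation part of $c_x^+(\eta)(1-u_x)-c_x^-(\eta)u_x$ by the $\om$-monomials $\om_x\om_{x+n_1}$, $\om_x\om_{x+n_2}$ and $\om_x\om_{x+n_1}\om_{x+n_2}$ produces exactly the three coefficients $a,b,c$ displayed in the lemma, establishing \eqref{eq:2.Fabc}. I expect the only genuine obstacle to be the care needed in the adjoint step, namely keeping the signs and the two single-site density ratios straight while using the independence of $c_x^\pm$ from $\eta_x$; the remaining steps are a routine polynomial expansion and relabelling.
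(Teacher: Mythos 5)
Your proposal is correct and follows essentially the same route as the paper's proof: the adjoint identity via the change of variables $\eta^x\mapsto\eta$ using $c_x^\pm(\eta^x)=c_x^\pm(\eta)$ and the single-site ratio $\nu(\eta^x)/\nu(\eta)$, then the split $c_x^\pm(\eta)=c_x^\pm(u)+(c_x^\pm(\eta)-c_x^\pm(u))$ so that the mean part yields $\sum_x f^N(x,u)\om_x$ and the fluctuation part, expanded in $\bar\eta_y=\chi(u_y)\om_y$, yields $F(\om,u)$ and the explicit coefficients $a,b,c$. No gaps; the verification of the three coefficients matches the paper's computation exactly.
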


\begin{proof}
The first identity in \eqref{eq:L_G1} is shown by computing
$\int_{\mathcal{X}_N} L_G^{*,\nu}{\bf 1}\cdot f d\nu$ for a test function $f$
and applying the change of variables $\eta^x\mapsto \eta$ as in the proof of Lemma
\ref{lem:4.4-q}; note that
\begin{align*}
& c_x(\eta^x) = c_x^+(\eta)\eta_x + c_x^-(\eta)(1-\eta_x),
\intertext{and}
& \nu(\eta^x) = \left\{ \frac{1-u_x}{u_x}\eta_x + \frac{u_x}{1-u_x} (1-\eta_x)\right\}
\nu(\eta).
\end{align*}

To see the second identity in \eqref{eq:L_G1}, we recall \eqref{eq:cpm} and note that
$$
\prod_{y\in\La}\eta_y = \prod_{y\in\La}(\bar\eta_y+u_y)
= \sum_{\emptyset \not= A \subset \La} \left(\prod_{y\in \La\setminus A}u_y\right)
\prod_{y\in A} \bar\eta_y + \prod_{y\in \La}u_y,
$$
for $0\notin \La \Subset \Z^d$.  Therefore, we have
$$
\left(\frac{ c_x^+(\eta)}{u_x}-
\frac{c_x^-(\eta)}{1-u_x}\right)\bar\eta_x 
= (\text{the term containing two or more } \bar\eta\text{'s})
+ \left(\frac{ c_x^+(u)}{u_x}-
\frac{c_x^-(u)}{1-u_x}\right)\bar\eta_x.
$$
Since the last term is equal to $f^N(x,u) \om_x$,
 this shows the second identity with
\begin{align*}
F(\om,u) & = \sum_{x\in\T_N^d} \left\{
\left(\frac{ c_x^+(\eta)}{u_x}-
\frac{c_x^-(\eta)}{1-u_x}\right)
- \left(\frac{ c_x^+(u)}{u_x}-
\frac{c_x^-(u)}{1-u_x}\right) \right\} \bar\eta_x \\
& = \sum_{x\in\T_N^d} \{(1-u_x)(c_x^+(\eta)- c_x^+(u)) - u_x(c_x^-(\eta)- c_x^-(u)) \} \om_x.
\end{align*}
In particular, for  $c^\pm(\eta)=c_0^\pm(\eta)$ of the form \eqref{eq:1.cpm}, 
we have
\begin{align*}
c^+(\eta)- c^+(u) 
& = a^+(\eta_{n_1}\eta_{n_2} - u_{n_1}u_{n_2}) + b^+(\eta_{n_1} - u_{n_1})  \\
& = a^+(\bar\eta_{n_1} \bar\eta_{n_2} + u_{n_2}\bar\eta_{n_1} + u_{n_1} \bar\eta_{n_2})
+ b^+\bar\eta_{n_1}, \\
c^-(\eta)- c^-(u)
& = a^-(\bar\eta_{n_1} \bar\eta_{n_2} + u_{n_2}\bar\eta_{n_1} + u_{n_1} \bar\eta_{n_2})
+ b^-\bar\eta_{n_1}.
\end{align*}
This leads to the desired formula \eqref{eq:2.Fabc}.
\end{proof}

We have the following lemma for the last term in \eqref{eq:dH}.

\begin{lem}\label{lem:4.6}
Recalling that $\nu_t = \nu_{u(t,\cdot)}$, $u(t,\cdot)=\{u_x(t)\}_{x\in\T_N^d}$, 
is Bernoulli, we have
\begin{equation} \label{eq:logpsi}
\partial_t \log\psi_t = \sum_{x\in\T_N^d} \partial_t u_x(t)\om_{x,t},
\end{equation}
where $\om_{x,t} = \bar\eta_x/\chi(u_x(t))$. 
\end{lem}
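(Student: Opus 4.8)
The plan is to exploit the explicit product structure of the Bernoulli measure $\nu_t = \nu_{u(t,\cdot)}$ and to differentiate its logarithm site by site. First I would observe that, since the reference measure $m$ is independent of $t$, one has $\log\psi_t(\eta) = \log\nu_t(\eta) - \log m(\eta)$, so that $\partial_t\log\psi_t = \partial_t\log\nu_t$; the reference measure drops out entirely and need not enter the computation at all (its only role is to make $\psi_t$ a well-defined density, which is why full support was assumed).

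Next I would write out the density of the product measure explicitly. Because the $\{\eta_x\}_{x\in\T_N^d}$ are independent under $\nu_t$ with $\nu_t(\eta_x=1)=u_x(t)$, we have $\nu_t(\eta) = \prod_{x\in\T_N^d} u_x(t)^{\eta_x}(1-u_x(t))^{1-\eta_x}$, and hence $\log\nu_t(\eta) = \sum_{x\in\T_N^d}\{\eta_x\log u_x(t) + (1-\eta_x)\log(1-u_x(t))\}$. Differentiating this finite sum term by term in $t$ yields $\partial_t\log\psi_t = \sum_{x\in\T_N^d}\partial_t u_x(t)\big\{\frac{\eta_x}{u_x(t)} - \frac{1-\eta_x}{1-u_x(t)}\big\}$.

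Finally I would simplify the bracketed coefficient of $\partial_t u_x(t)$. Putting the two fractions over the common denominator $u_x(t)(1-u_x(t)) = \chi(u_x(t))$, the numerator collapses to $\eta_x(1-u_x(t)) - (1-\eta_x)u_x(t) = \eta_x - u_x(t) = \bar\eta_x$, so the bracket equals $\bar\eta_x/\chi(u_x(t)) = \om_{x,t}$, which is exactly the claimed formula \eqref{eq:logpsi}.

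Since every step is an elementary and exact computation, there is no genuine obstacle in this lemma; the only point requiring a little care is confirming that the two $t$-dependent logarithmic terms combine into the \emph{normalized} centered variable $\om_{x,t}$ rather than into some other multiple of $\bar\eta_x$, and this is precisely what the common-denominator simplification in the last step verifies.
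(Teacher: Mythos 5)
Your proof is correct and follows essentially the same route as the paper: both exploit the product structure of the Bernoulli measure, differentiate $\log\nu_t$ site by site (the reference measure $m$ dropping out), and identify the resulting coefficient of $\partial_t u_x(t)$ with $\om_{x,t}$. The only cosmetic difference is that you write the one-site density as $u_x(t)^{\eta_x}(1-u_x(t))^{1-\eta_x}$ and simplify by a common denominator, whereas the paper uses the equivalent form $u_x(t)\eta_x+(1-u_x(t))(1-\eta_x)$ and concludes by splitting into the cases $\eta_x=1$ and $\eta_x=0$.
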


\begin{proof}
The proof is straightforward.  In fact, we have by definition
$$
\psi_t(\eta) = \frac{\nu_t(\eta)}{m(\eta)}
= \frac{\prod_x \{u_x(t)\eta_x + (1-u_x(t))(1-\eta_x)\}}{m(\eta)},
$$
and therefore,
\begin{align*}
\partial_t \log\psi_t(\eta) & = \sum_x \frac{\partial_t u_x(t)(2\eta_x-1)}
{u_x(t)\eta_x + (1-u_x(t))(1-\eta_x)}  \\
& = \sum_x \left\{ \frac{\partial_t u_x(t)}{u_x(t)} {\bf 1}_{\{\eta_x=1\}}
-\frac{\partial_t u_x(t)}{1-u_x(t)} {\bf 1}_{\{\eta_x=0\}}\right\}.
\end{align*}
This shows the conclusion.
\end{proof}

The results obtained in Lemmas \ref{lem:4.4-q}, \ref{lem:4.5} and \ref{lem:4.6} are summarized in
the following corollary.  Note that the discretized hydrodynamic equation \eqref{eq:HD-discre} 
exactly cancels the first order term in $\om$.
Therefore only quadratic or higher order terms in $\om$ survive. We denote the solution
$u^N(t) = \{u^N(t,x)\}_{x\in\T_N^d}$ of \eqref{eq:HD-discre} simply by $u(t)=\{u_x(t)\}_{x\in\T_N^d}$.

\begin{cor}  \label{Cor:2.6}
We have
\begin{equation*}
L_N^{*,\nu_t^N}{\bf 1} - \partial_t \log\psi_t
= -\frac{N^2}2 \sum_{x,y\in \T_N^d:|x-y|=1} (u_y(t)-u_x(t))^2 \om_{x,t} \om_{y,t} 
+ K F(\om_t,u(t)),
\end{equation*}
where $\om_t = (\om_{x,t})$.  In particular, when $c^\pm(\eta)$ are given by \eqref{eq:1.cpm}, 
omitting to write the dependence on $t$, this is equal to
\begin{align}  \label{eq:cor26}
& -\frac{N^2}2 \sum_{x,y\in \T_N^d:|x-y|=1} (u_y-u_x)^2\om_x \om_y \\
& \qquad + K \left\{ \sum_{x\in\T_N^d} \tilde\om_x^{(a)}\om_{x+n_1}
+ \sum_{x\in\T_N^d} \tilde\om_x^{(b)} \om_{x+n_2} 
 + \sum_{x\in\T_N^d} \tilde\om_x^{(c)} \om_{x+n_1}\om_{x+n_2}\right\}, \notag \\
& \qquad =: V_1+V_a+V_b+V_c, \notag
\end{align}
where $\tilde\om_x^{(a)}$ stands for $a(u_x,u_{x+n_1},u_{x+n_2})\om_x$, and
$\tilde\om_x^{(b)}$ and $\tilde\om_x^{(c)}$ are defined similarly.
\end{cor}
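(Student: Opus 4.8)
The plan is to combine Lemmas \ref{lem:4.4-q}, \ref{lem:4.5} and \ref{lem:4.6} by linearity, and then to use the discretized hydrodynamic equation \eqref{eq:HD-discre} to cancel all the first-order terms in $\om$. Since $L_N = N^2 L_K + K L_G$, taking adjoints in $L^2(\nu_t^N)$ yields $L_N^{*,\nu_t^N}{\bf 1} = N^2 L_K^{*,\nu_t^N}{\bf 1} + K L_G^{*,\nu_t^N}{\bf 1}$. First I would substitute the expressions from Lemma \ref{lem:4.4-q} and Lemma \ref{lem:4.5}, working throughout with the mean $u(\cdot) = u^N(t,\cdot)$ so that $\om_x$ means $\om_{x,t}$.

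The resulting expression separates into a part that is quadratic or higher in $\om$ and a part that is linear in $\om$. The quadratic-and-higher part is the Kawasaki term $-\frac{N^2}{2}\sum_{x,y:|x-y|=1}(u_y-u_x)^2\om_x\om_y$ together with $KF(\om,u)$. The linear part collects three contributions: from the Kawasaki part, $N^2\sum_x(\De u)_x\om_x = \sum_x \De^N u_x\,\om_x$, using $\De^N u(x) = N^2(\De u)_x$; from the Glauber part, $K\sum_x f^N(x,u)\om_x$; and, from Lemma \ref{lem:4.6}, the subtracted term $-\partial_t\log\psi_t = -\sum_x \partial_t u_x\,\om_x$. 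The key observation is that these three linear contributions sum to $\sum_x\{\De^N u_x + Kf^N(x,u) - \partial_t u_x\}\om_x$, and the bracket vanishes identically because $u(t)=u^N(t)$ solves \eqref{eq:HD-discre}. This is exactly the cancellation for which the hydrodynamic equation was engineered, and it leaves
\[
L_N^{*,\nu_t^N}{\bf 1} - \partial_t\log\psi_t = -\frac{N^2}{2}\sum_{x,y\in\T_N^d:|x-y|=1}(u_y-u_x)^2\om_x\om_y + KF(\om,u),
\]
which is the first assertion.

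For the ``in particular'' statement, I would specialize to flip rates of the form \eqref{eq:1.cpm} and simply insert the explicit expression \eqref{eq:2.Fabc} for $F(\om,u)$ established in Lemma \ref{lem:4.5}, reading off the three terms with coefficients $\tilde\om_x^{(a)} = a(u_x,u_{x+n_1},u_{x+n_2})\om_x$ and, analogously, $\tilde\om_x^{(b)}$ and $\tilde\om_x^{(c)}$. No further computation is needed. I do not expect any genuine obstacle here: the entire content is the bookkeeping of combining the three lemmas plus the one-line cancellation of the linear terms via \eqref{eq:HD-discre}. The only point requiring care is to keep the Bernoulli mean fixed at $u = u^N(t,\cdot)$ so that the $\om$'s appearing in all three lemmas are the same $\om_{x,t}$.
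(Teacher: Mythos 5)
Your proposal is correct and is essentially the paper's own argument: the authors present Corollary \ref{Cor:2.6} as a direct summary of Lemmas \ref{lem:4.4-q}, \ref{lem:4.5} and \ref{lem:4.6}, with the linear terms $\sum_x\{\De^N u_x + Kf^N(x,u) - \partial_t u_x\}\om_x$ cancelled exactly because $u^N(t)$ solves \eqref{eq:HD-discre}, and the explicit form \eqref{eq:cor26} read off from \eqref{eq:2.Fabc}. Your bookkeeping, including the identity $N^2(\De u)_x = \De^N u(x)$ and the insistence on keeping the Bernoulli mean fixed at $u^N(t,\cdot)$, matches the intended proof.
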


\section{Proof of Theorem \ref{thm:EstHent}}  \label{sec:3}

We prove in this section Theorem \ref{thm:EstHent}.  In view of Proposition
\ref{thm:4.2} and Corollary \ref{Cor:2.6},
our goal is to estimate the following expectation under $\mu_t$ by the 
Dirichlet form $N^2\mathcal{D}_K(\sqrt{f_t};\nu_t)$ and the 
relative entropy $H(\mu_t|\nu_t)$
itself, where $f_t = d\mu_t/d\nu_t$ and $\mu_t = \mu_t^N$, $\nu_t = \nu_t^N$:
\begin{equation}  \label{eq:2.int}
\int_{\mathcal{X}_N} \left\{-\frac{N^2}2 \sum_{x,y\in \T_N^d:|x-y|=1} (u_y(t)-u_x(t))^2 \om_{x,t} \om_{y,t}
 + K F(\om_t,u(t))\right\} d \mu_t.
\end{equation}
Note that the condition (A7)$'$ implies
that $\chi(u_x(t))^{-1}=\chi(u^N(t,x))^{-1}$ appearing in the definition
of $\om_{x,t}$ is bounded; see the comments given below 
Theorem \ref{prop:4.4-2}.  From the condition (A4) combined with
Proposition \ref{prop:nabla-u} stated below,
the first term in \eqref{eq:2.int} can be treated similarly to the second, 
but with the front factor $K$ replaced by $K^2$; see Section \ref{sec:3.3} for details.

\subsection{Replacement by local sample average}

Recall that we assume $c^\pm(\eta)$ have the form \eqref{eq:1.cpm}
by the condition (A1)  so that
$F(\om,u)$ has the form \eqref{eq:2.Fabc}.  With this in mind,
recall the definition of $V_a$ defined in \eqref{eq:cor26}:
$$
V_a \equiv V_a(\om,u) = K \sum_{x\in\T_N^d} \tilde\om_x^{(a)} \om_{x+n_1},
$$
where $\tilde\om_x^{(a)}$ is defined in Corollary \ref{Cor:2.6}.
The first step is to replace $V_a$ by its local sample average $V_a^\ell$
defined by
$$
V_a^\ell := K \sum_{x\in\T_N^d} \overleftarrow{(\tilde\om_\cdot^{(a)})}_{x,\ell} 
\overrightarrow{(\om_{\cdot+n_1})}_{x,\ell}, \quad \ell\in\N,
$$
where
$$
\overrightarrow{g}_{x,\ell} := \frac1{|\La_\ell|} \sum_{y\in\La_\ell}g_{x+y},
\quad
\overleftarrow{g}_{x,\ell} := \frac1{|\La_\ell|} \sum_{y\in\La_\ell}g_{x-y},
$$
for functions $g=\{g_x(\eta)\}_{x\in\T_N^d}$ and $\La_\ell = [0,\ell-1]^d\cap\Z^d$.  
Since $\ell$ will be smaller than $N$, one can regard $\La_\ell$ as a subset of $\T_N^d$. 
The reason that we consider both
$\overrightarrow{g}_{x,\ell}$ and $\overleftarrow{g}_{x,\ell}$ is to make
$h_x^{\ell,j}$ defined by \eqref{eq:2.14} satisfy the condition
$h_x^{\ell,j}(\eta^{x,x+e_j})=h_x^{\ell,j}(\eta)$ for any $\eta\in\mathcal X_N$.

\begin{prop}  \label{prop:2.9}
We assume the conditions of Theorem \ref{thm:EstHent} and write $\nu=\nu_{u(\cdot)}$
and $d\mu=fd\nu$ by omitting $t$.  For $\k>0$ small enough, we choose $\ell = N^{\frac1d(1-\k)}$ when $d\ge 2$ and $\ell=N^{\frac12-\k}$ when $d=1$.  
Then, the cost of this replacement is estimated as
\begin{align}  \label{eq:2.11}
\int(V_a-V_a^\ell)f d\nu \le \e_0 N^2 \mathcal{D}_K(\sqrt{f};\nu)
+ C_{\e_0,\k} \left( H(\mu|\nu) + N^{d-1+\k}\right),
\end{align}
for every $\e_0>0$ with some $C_{\e_0,\k}>0$ when $d\ge 2$ and the last
$N^{d-1+\k}$ is replaced by $N^{\frac12+\k}$ when $d=1$.
\end{prop}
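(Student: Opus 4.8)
The plan is to carry out a standard ``replacement by local average'' through integration by parts against the Kawasaki exchanges, extracting a small multiple of the Dirichlet form $N^2\mathcal D_K(\sqrt f;\nu)$ and relegating the rest to the relative entropy. First I would split the cost into two successive one-variable replacements,
$$
\int(V_a-V_a^\ell)f\,d\nu = K\sum_{x\in\T_N^d}\int \tilde\om_x^{(a)}\big(\om_{x+n_1}-\overrightarrow{(\om_{\cdot+n_1})}_{x,\ell}\big)f\,d\nu + K\sum_{x\in\T_N^d}\int\big(\tilde\om_x^{(a)}-\overleftarrow{(\tilde\om_\cdot^{(a)})}_{x,\ell}\big)\overrightarrow{(\om_{\cdot+n_1})}_{x,\ell}\,f\,d\nu,
$$
and treat both pieces the same way. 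In the first piece I expand $\om_{x+n_1}-\overrightarrow{(\om_{\cdot+n_1})}_{x,\ell}=|\La_\ell|^{-1}\sum_{y\in\La_\ell}(\om_{x+n_1}-\om_{x+n_1+y})$ and telescope each difference into a sum of at most $d\ell$ nearest-neighbour gradients $\om_z-\om_{z+e_j}$ along a fixed lattice path.

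The key step is the single-bond estimate. For each gradient the accompanying coefficient can be taken to be the bond-invariant function $h_x^{\ell,j}$ with $h_x^{\ell,j}(\eta^{x,x+e_j})=h_x^{\ell,j}(\eta)$, which is exactly the purpose of pairing the backward average $\overleftarrow{(\cdot)}_{x,\ell}$ with the forward one $\overrightarrow{(\cdot)}_{x,\ell}$. Writing $f=(\sqrt f)^2$, using $a^2-b^2=(a-b)(a+b)$ and the change of variables $\eta\mapsto\eta^{z,z+e_j}$ under $\nu$ as in Lemma \ref{lem:4.4-q}, the bond-invariance of $h_x^{\ell,j}$ lets me antisymmetrise, so that Young's inequality bounds each contribution by $\tfrac\beta4\int(\sqrt{f(\eta^{z,z+e_j})}-\sqrt{f(\eta)})^2\,d\nu$ plus a fluctuation term. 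Grouping the first terms by bond and counting how many telescoping paths traverse a fixed bond, the total is made $\le\e_0 N^2\mathcal D_K(\sqrt f;\nu)$ by taking $\beta$ small; here it is essential that $K\ell\ll N^2$, which holds since $\ell\le N^{1/2}$ and $K\le C(\log N)^{1/2}$. The remaining fluctuation terms are integrated against $d\mu=f\,d\nu$ and estimated by the entropy inequality $E^\mu[g]\le\gamma^{-1}H(\mu|\nu)+\gamma^{-1}\log E^\nu[e^{\gamma g}]$; since the $\om_z$ are independent, bounded and centred under the Bernoulli $\nu$, the exponential moment is controlled by a routine large-deviation estimate, and the choice $\ell=N^{(1-\k)/d}$, for which $N^d/|\La_\ell|=N^{d-1+\k}$, produces the error $C_{\e_0,\k}\big(H(\mu|\nu)+N^{d-1+\k}\big)$ claimed in \eqref{eq:2.11}.

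The main obstacle is that the reference measure $\nu=\nu_{u(\cdot)}$ has a spatially varying mean, so the exchange $\eta\mapsto\eta^{z,z+e_j}$ is not a symmetry of $\nu$ and the integration by parts carries correction terms. Just as in Lemma \ref{lem:4.4-q}, these are of relative size $|u_z-u_{z+e_j}|=O(\|\na^N u^N\|/N)$, which by condition (A4) together with Proposition \ref{prop:nabla-u} is $O(K/N)$ and hence summable and harmless. Two further points keep the bookkeeping honest: the bounds $u_-\le u^N(t,x)\le u_+$ from (A7)$'$ make $\chi(u_x)^{-1}$, and therefore $\|\om_z\|_\infty$ and $\|h_x^{\ell,j}\|_\infty$, uniformly bounded; and every stray power of $K$ is absorbed into $N^\k$ because $K^2\le C\log N\ll N^\k$. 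The delicate part is the combinatorial count of bond usages by the telescoping paths, since it is this count, balanced against $\e_0$, $\beta$, $\gamma$ and $\ell$, that must land precisely on $N^{d-1+\k}$ and not on a larger power; the second piece of the decomposition is handled identically, telescoping $\overleftarrow{(\tilde\om_\cdot^{(a)})}_{x,\ell}$ in place of $\overrightarrow{(\om_{\cdot+n_1})}_{x,\ell}$.
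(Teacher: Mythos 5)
Your overall architecture (telescoping into nearest-neighbour gradients with bond-invariant coefficients, integration by parts with $O(K/N)$ corrections coming from the inhomogeneous Bernoulli measure, Young's inequality against the Dirichlet form, then entropy plus concentration inequality) is the same as the paper's. But there is a genuine gap at exactly the step you yourself flag as ``delicate'': the combinatorial count for \emph{fixed lattice paths} does not land on $N^{d-1+\k}$, and no tuning of $\e_0,\b,\ga,\ell$ can make it do so. After you rearrange your telescoped sums by bond, the coefficient on the bond $\{z,z+e_j\}$ has the form $G_z^j=\sum_v \tilde\om^{(a)}_{z-v-n_1}\Psi(v,v+e_j)$, where $\Psi$ is the discrete flow transporting $\de_0$ to $p_\ell$ determined by your paths. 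The entropy-plus-concentration step (Lemma \ref{lem:ci}) only controls $\log\int e^{\ga (G_z^j)^2}d\nu$ for $\ga\lesssim 1/\bar\si^2$, where $\bar\si^2\asymp\sum_v\Psi(v,v+e_j)^2$ is the \emph{cost} of the flow, so the resulting error carries the factor $\bar\si^2$. For canonical coordinate-by-coordinate paths this cost is of order $\ell$ in every dimension: all paths start by running along the first axis, the bond $\{ke_1,(k+1)e_1\}$ carries flow $\approx(\ell-k)/\ell$, and $\sum_k\bigl((\ell-k)/\ell\bigr)^2\asymp\ell$. Feeding $\bar\si^2\asymp\ell$ into the scheme gives, in place of \eqref{eq:Va-1}, the bound $\e_0N^2\mathcal{D}_K(\sqrt f;\nu)+\frac{CK^2\ell^{d+1}}{N}\bigl(H(\mu|\nu)+\frac{N^d}{\ell^d}\bigr)+CK^2N^{d-1}$. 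With your stated choice $\ell=N^{(1-\k)/d}$ the coefficient of $H(\mu|\nu)$ is of order $K^2N^{(1-\k(d+1))/d}$, which diverges polynomially (fatal for the later Gronwall step); if instead you shrink $\ell$ to about $(N/K^2)^{1/(d+1)}$ to keep that coefficient bounded, the additive error becomes $K^2\ell N^{d-1}\asymp N^{d-1+1/(d+1)}$ up to logarithms, strictly worse than $N^{d-1+\k}$. Either way the proposition as stated is out of reach of fixed-path telescoping (in $d=1$ a single path is optimal, which is exactly why $g_1(\ell)=\ell$ and the error there is only $N^{1/2+\k}$; for $d\ge2$ the whole point is to beat this).

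The missing ingredient is the flow lemma (Lemma \ref{lem:flow}, from Appendix G of \cite{JM2}): one must replace canonical paths by a flow $\Phi^\ell$ on $\La_{2\ell-1}$ connecting $\de_0$ to $q_\ell=p_\ell*p_\ell$ whose cost satisfies $\|\Phi^\ell\|^2\le C_dg_d(\ell)$, with $g_d(\ell)=\log\ell$ for $d=2$ and $g_d(\ell)=1$ for $d\ge3$ --- morally, the transported mass must spread out isotropically, like an electrostatic flux, rather than travel along a common axis. This is why the paper performs the replacement in one stroke, writing $V_a-V_a^\ell=K\sum_j\sum_x h_x^{\ell,j}(\om_x-\om_{x+e_j})$ with $h_x^{\ell,j}=\sum_y\tilde\om^{(a)}_{x-y-n_1}\Phi^\ell(y,y+e_j)$; your two-piece splitting is not in itself the problem. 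With $\bar\si^2\le C_dg_d(\ell)$ the coefficient $K^2\ell^dg_d(\ell)/N\le\de^2 g_d(\ell)N^{-\k}\log N$ stays bounded and the additive error is $K^2g_d(\ell)N^{d-1}\le N^{d-1+\k}$, which is exactly \eqref{eq:2.11}. The remaining elements of your outline --- the bond-invariance of the coefficients (which in your splitting indeed follows from (A1), since the paths stay in $x+n_1+\La_\ell\not\ni x$), the $R_2$-type corrections of size $K/N$, and the choice $\b=\e_0N^2/K$ --- are consistent with the paper's argument.
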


The first step for the proof of this proposition is the flow lemma for the telescopic sum.
We call $\Phi = \{\Phi(x,y)\}_{x\sim y: x, y\in G}$  a flow on a finite graph $G$
connecting two probability measures $p$ and $q$ on $G$ if $\Phi(x,y)
= -\Phi(y,x)$ and $\sum_{z\sim x}\Phi(x,z) = p(x)-q(x)$ hold for all $x, y \in G: x\sim y$.
We define a cost of a flow $\Phi$ by
$$
\|\Phi\|^2:=\dfrac{1}{2}\sum_{x\sim y} \Phi(x,y)^2.
$$
The following lemma has been proved in Appendix G of \cite{JM2}.

\begin{lem}[Flow lemma]\label{lem:flow}
For each $\ell\in\N$, let $p_\ell$ be the uniform distribution on $\La_\ell$
and set $q_\ell:=p_\ell* p_\ell$. Then, there exists a flow
$\Phi^\ell$ on $\La_{2\ell-1}$ connecting the Dirac measure $\de_0$ and $q_\ell$ such that
$\|\Phi^\ell\|^2 \le C_d g_d(\ell)$
with some constant $C_d>0$, independent of $\ell$, where
\begin{align*}
g_d(\ell)=
\begin{cases}
\ell, & \text{if $d=1$,}\\
\log \ell, & \text{if $d=2$,}\\
1, & \text{if $d\ge 3$.}
\end{cases}
\end{align*}
\end{lem}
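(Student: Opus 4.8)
The plan is to exploit the Hilbert space structure of flows together with the convolution identity $q_\ell=p_\ell*p_\ell$. Antisymmetric edge functions on the graph form a real inner product space under $\lan\Phi,\Phi'\ran:=\frac12\sum_{x\sim y}\Phi(x,y)\Phi'(x,y)$, for which $\|\Phi\|^2$ is exactly the cost, the divergence $\operatorname{div}\Phi(x):=\sum_{z\sim x}\Phi(x,z)$ is linear in $\Phi$, and the set of flows with a prescribed divergence is an affine subspace; in particular it suffices to exhibit a \emph{single} flow with the claimed energy, and the triangle inequality for $\|\cdot\|$ is at our disposal. I would first record two elementary facts: spatial translation $\Phi\mapsto\Phi^{(w)}$, $\Phi^{(w)}(x,y):=\Phi(x-w,y-w)$, is an isometry of $\|\cdot\|$ and shifts the divergence, $\operatorname{div}\Phi^{(w)}=(\operatorname{div}\Phi)(\cdot-w)$; and divergences superpose.

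The key reduction is to a single-box spreading problem: construct a flow $\psi$ supported on $\La_\ell$ with $\operatorname{div}\psi=\de_0-p_\ell$ and $\|\psi\|^2\le C_d\,g_d(\ell)$. Granting this, set
\[
\Phi^\ell:=\psi+\sum_{w\in\La_\ell}p_\ell(w)\,\psi^{(w)}.
\]
By the two facts above, $\operatorname{div}\psi^{(w)}=\de_w-p_\ell(\cdot-w)$, so the sum has divergence $p_\ell-p_\ell*p_\ell=p_\ell-q_\ell$, and adding $\psi$ gives $\operatorname{div}\Phi^\ell=\de_0-q_\ell$; moreover each $\psi^{(w)}$ with $w\in\La_\ell$ is supported in $\La_{2\ell-1}$, hence so is $\Phi^\ell$. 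For the cost, since $\{p_\ell(w)\}_w$ is a probability vector and translation preserves $\|\cdot\|$,
\[
\|\Phi^\ell\|\le\|\psi\|+\Big\|\sum_{w}p_\ell(w)\,\psi^{(w)}\Big\|\le\|\psi\|+\sum_{w}p_\ell(w)\,\|\psi^{(w)}\|=2\|\psi\|,
\]
so $\|\Phi^\ell\|^2\le4\|\psi\|^2\le 4C_d\,g_d(\ell)$. Thus the convolution structure of $q_\ell$ is absorbed at the cost of only a constant factor.

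It remains to prove the single-box estimate, which is the heart of the matter and the only place where the dimension enters. By Thomson's minimal-energy principle the least cost among flows with divergence $\de_0-p_\ell$ equals $\lan\de_0-p_\ell,(-\De_{\La_\ell})^{-1}(\de_0-p_\ell)\ran$, the squared $H^{-1}$-norm of $\de_0-p_\ell$ on the box, with $-\De_{\La_\ell}$ the graph Laplacian restricted to mean-zero functions; this is finite since $\de_0-p_\ell$ has total mass zero. Concretely I would build an explicit radially organized flow: across the discrete sphere of radius $r$ about the origin the flow must carry the mass still destined for the exterior, which is of order one, distributed over $\asymp r^{d-1}$ edges, so it may be chosen of size $\asymp r^{1-d}$ per edge. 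The energy contributed at radius $r$ is then $\asymp r^{d-1}\cdot(r^{1-d})^2=r^{1-d}$, and summing over $1\le r\lesssim\ell$ gives
\[
\|\psi\|^2\asymp\sum_{r=1}^{\ell}r^{1-d}\asymp g_d(\ell),
\]
since this partial sum grows like $\ell$, like $\log\ell$, or stays bounded according as $d=1$, $d=2$, or $d\ge3$.

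The main obstacle is precisely this single-box estimate: it encodes the recurrence/transience dichotomy of the simple random walk, equivalently the diagonal behaviour of the lattice Green function, and turning the radial heuristic into a genuine flow requires care in apportioning the flux over the lattice spheres and in keeping the flow strictly inside $\La_\ell$ near the boundary. Everything else — the inner-product reduction and the convolution trick — is purely algebraic and costs only constants. As a sanity check, the case $d=1$ is fully explicit: on $\{0,1,\dots,\ell-1\}$ the flow $\psi(k,k+1)=(\ell-1-k)/\ell$ has divergence $\de_0-p_\ell$ and cost $\sum_{k=0}^{\ell-2}\big((\ell-1-k)/\ell\big)^2\asymp\ell$, confirming $g_1(\ell)=\ell$.
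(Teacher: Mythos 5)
Your proposal is correct and, at its core, it is the same argument as the paper's, which itself only sketches the construction and defers the details to Appendix G of Jara--Menezes \cite{JM2}: both proofs split $\de_0-q_\ell=(\de_0-p_\ell)+(p_\ell-q_\ell)$, build a single-box flow carrying $\de_0$ to $p_\ell$ whose energy is counted shell by shell as $\sum_{r\lesssim\ell}r^{d-1}\bigl(r^{1-d}\bigr)^2\asymp g_d(\ell)$, and then reach $q_\ell$ by averaging translates of that flow over $p_\ell$. Two points of comparison are worth recording. First, the step you honestly leave as a heuristic (apportioning the flux over lattice spheres and handling the boundary) is exactly what the paper makes concrete: it telescopes through the intermediate uniform measures $p_k$, constructing flows $\Psi^\ell_k$ connecting $p_k$ to $p_{k+1}$ with $\sup_{x,j}|\Psi^\ell_k(x,x+e_j)|\le ck^{-d}$ and summing over $k=0,\dots,\ell-1$; an edge at distance $r$ from the origin is then used only by the steps with $k\gtrsim r$, so the accumulated flux there is $\lesssim r^{1-d}$, which is precisely your radial bound. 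So nothing in your sketch would fail, but the telescoping device is what turns the heuristic into a proof. Second, your combined flow $\Phi^\ell=\psi+\sum_{w}p_\ell(w)\,\psi^{(w)}$ is in fact more careful than the formula displayed in the paper, $\Phi^\ell(x,x+e_j)=\sum_{z}\Psi^\ell(x-z,x-z+e_j)p_\ell(z)$: since divergence commutes with convolution, that expression alone has divergence $(\de_0-p_\ell)*p_\ell=p_\ell-q_\ell$, i.e.\ it connects $p_\ell$ to $q_\ell$, and the term $\Psi^\ell$ must be added back exactly as you do; your triangle-inequality bound then loses only a harmless constant factor. In short, your blind reconstruction is faithful to the paper's route and at one point slightly cleaner, with the single-box estimate left at the same level of sketch as the paper itself.
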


The flow stated in Lemma \ref{lem:flow} is constructed step by step as follows.
For each $k=0,\dots,\ell-1$, we first construct a flow $\Psi^\ell_k$ connecting $p_k$ and $p_{k+1}$
such that $\sup_{x,j}|\Psi^\ell_k(x,x+e_j)|\le c k^{-d}$ with some $c>0$.
Then we can obtain the flow $\Psi^\ell$ connecting $\de_0$ and $p_\ell$
by simply summing up $\Psi^\ell_k$: $\Psi^\ell:=\sum_{k=0}^{\ell-1} \Psi_k^\ell$.
It is not difficult to see that the cost of $\Psi^\ell$ is bounded by $Cg_d(\ell)$.
Finally, we define the flow $\Phi^\ell$ connecting $\de_0$ and $q_\ell$ by
$$
\Phi^\ell(x,x+e_j):=\sum_{z\in \La_{\ell}}\Psi^\ell(x-z,x-z+e_j)p_\ell(z),
$$
whose cost is bounded by $C_dg_d(\ell)$; see \cite{JM2} for more details.


Recall $p_\ell(y)$ defined in Lemma \ref{lem:flow} and
note that $p_\ell$ can be regarded as a probability distribution on $\T_N^d$.
Set $\hat{p}_\ell(y) = p_\ell(-y)$, then we have
\begin{align*}
g*p_\ell &= \sum_{y\in\T_N^d} g_{x-y} p_\ell(y) = \frac1{|\La_\ell|} 
\sum_{y\in \La_\ell}g_{x-y} = \overleftarrow{g}_{x,\ell},
\end{align*}
and similarly $g*\hat p_\ell = \overrightarrow{g}_{x,\ell}$.
Therefore,
\begin{align*}
V_a^\ell 
& = K \sum_{x\in\T_N^d} (\tilde\om_\cdot^{(a)}*p_\ell)_x  (\om_{\cdot+n_1}* \hat p_\ell)_x \\
& = K \sum_{x\in\T_N^d} \left( \sum_{y\in\T_N^d} \tilde\om^{(a)}_y p_\ell(x-y)\right)
\left(\sum_{z\in\T_N^d} \om_{z+n_1} p_\ell(z-x) \right) \\
& = K \sum_{y\in\T_N^d} \tilde\om^{(a)}_y \sum_{z\in\T_N^d} \om_{z+n_1} p_\ell*p_\ell(z-y) \\
& = K \sum_{y\in\T_N^d} \tilde\om^{(a)}_y (\om_{\cdot+n_1} * \hat q_\ell)_y,
\end{align*}
where $q_\ell$ is defined in Lemma \ref{lem:flow} and $\hat q_\ell(y) := q_\ell(-y)$.
Note that supp$\,q_\ell \subset \La_{2\ell-1} = [0,2\ell-2]^d \cap \Z^d$.
Let $\Phi^\ell$ be a flow given in Lemma \ref{lem:flow}.
Accordingly, since $\Phi^\ell$ is a flow connecting $\de_0$ and $q_\ell$,
one can rewrite
\begin{align*}
V_a-V_a^\ell
& = K \sum_{x\in\T_N^d} \tilde\om^{(a)}_x \left\{\om_{x+n_1} - \sum_{y\in\T_N^d}\om_{x-y+n_1} \hat q_\ell(y)\right\} \\
& = K \sum_{x\in\T_N^d} \tilde\om^{(a)}_x \sum_{y\in\T_N^d} \om_{x+y+n_1}(\de_0(y)- q_\ell(y))  \\
& = K \sum_{j=1}^d \sum_{x\in\T_N^d} \tilde\om^{(a)}_x \sum_{y\in\T_N^d} \om_{x+y+n_1} 
\{\Phi^\ell(y,y+ e_j) - \Phi^\ell(y-e_j,y)\} \\
& = K \sum_{j=1}^d \sum_{x\in\T_N^d} \tilde\om^{(a)}_x \sum_{y\in\T_N^d} (\om_{x+y+n_1} - \om_{x+(y+e_j)+n_1} )
\Phi^\ell(y,y+ e_j) \\
& = K \sum_{j=1}^d \sum_{x\in\T_N^d}  \left(\sum_{y\in\T_N^d} \tilde\om^{(a)}_{x-y-n_1} \Phi^\ell(y,y+ e_j) \right) 
(\om_x -\om_{x+e_j} ).
\end{align*}
For the last line, we introduced the change of variables $x+y+n_1 \mapsto x$ for the sum
in $x$. Thus, we have shown
\begin{equation}\label{eq:2.13}
V_a-V_a^\ell = K \sum_{j=1}^d \sum_{x\in \T_N^d} h_x^{\ell,j} (\om_x-\om_{x+e_j}),
\end{equation}
where
\begin{equation}\label{eq:2.14}
h_x^{\ell,j}= \sum_{y\in \La_{2\ell-1}} \tilde\om_{x-y-n_1}^{(a)}\Phi^\ell(y,y+e_j).
\end{equation}
Note that $h_x^{\ell,j}$ satisfies $h_x^{\ell,j}(\eta^{x,x+e_j}) = h_x^{\ell,j}(\eta)$ for any $\eta\in\mathcal X_N$.
Indeed, in \eqref{eq:2.14}, $\tilde\om_{x-y-n_1}^{(a)}(\eta^{x,x+e_j}) \not= 
\tilde\om_{x-y-n_1}^{(a)}(\eta)$
only if $x-y-n_1=x$ or $x+e_j$, namely, $y=-n_1$ or $y=-n_1-e_j$, but
these $y$ are not in $\La_{2\ell-1}$ due to the condition (A1) for $n_1$.

Another lemma we use is the integration by parts formula under the 
Bernoulli measure $\nu_{u(\cdot)}$ with a spatially dependent mean.
We will apply this formula for the function $h=h_x^{\ell,j}$.

\begin{lem}[Integration by parts]\label{lem:IP-0}
Let $\nu=\nu_{u(\cdot)}$ and assume 
$u_-\le u_x, u_y\le u_+$ holds for $x, y\in \T_N^d: |x-y|=1$
with some $0<u_-<u_+<1$.  Let $h=h(\eta)$ be a function
satisfying $h(\eta^{x,y})=h(\eta)$ for any $\eta\in\mathcal X_N$. 
Then, for a probability density $f$ with respect to $\nu$, we have
$$
\int h(\eta_y-\eta_x) f d\nu=\int h(\eta) \eta_x\big( f(\eta^{x,y})-f(\eta)\big) d\nu +R_1,
$$
and the error term $R_1=R_{1,x,y}$ is bounded as
$$
|R_1| \le C |\nabla_{x,y}^1 u| \int|h(\eta)| f d\nu,
$$
with some $C=C_{u_-,u_+}>0$, where $\nabla_{x,y}^1 u =u_x-u_y$.
\end{lem}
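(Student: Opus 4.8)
The plan is to exploit the exchange symmetry $h(\eta^{x,y})=h(\eta)$ together with the explicit form of the Radon--Nikodym factor $\nu(\eta^{x,y})/\nu(\eta)$ already computed in the proof of Lemma \ref{lem:4.4-q}. First I would write the left-hand integral as a sum over configurations, $\int h(\eta_y-\eta_x)fd\nu=\sum_{\eta}h(\eta)(\eta_y-\eta_x)f(\eta)\nu(\eta)$, and observe that only the configurations with $\eta_x\neq\eta_y$ contribute, since the prefactor $\eta_y-\eta_x$ vanishes otherwise. On the set $\{\eta_x=1,\eta_y=0\}$ the factor equals $-1$, and on $\{\eta_x=0,\eta_y=1\}$ it equals $+1$; after the swap $\eta\mapsto\eta^{x,y}$ these two sets are interchanged. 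The goal of the change of variables is to convert $f(\eta)$ into $f(\eta^{x,y})$ on one of these two sets so that the difference $f(\eta^{x,y})-f(\eta)$ emerges, while $h$ is left untouched thanks to its invariance.

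Concretely, I would apply the substitution $\eta\mapsto\eta^{x,y}$ to the portion of the sum supported on, say, $\{\eta_x=0,\eta_y=1\}$, using the identity for $\nu(\eta^{x,y})$ recorded in the proof of Lemma \ref{lem:4.4-q}, and use $h(\eta^{x,y})=h(\eta)$ to keep $h$ fixed. This produces a term of the shape $\int h(\eta)\,\eta_x\,f(\eta^{x,y})d\nu$ up to a multiplicative weight $(1-u_y)u_x/\{(1-u_x)u_y\}$ coming from the density ratio. The main term is then recognized as $\int h\,\eta_x(f(\eta^{x,y})-f(\eta))d\nu$, and everything else is collected into the remainder $R_1$.

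The error term $R_1$ arises precisely because the weight $(1-u_y)u_x/\{(1-u_x)u_y\}$ differs from $1$; its deviation from $1$ is controlled by $u_x-u_y=\nabla^1_{x,y}u$. I would Taylor-expand or directly estimate this ratio as $1+O(|u_x-u_y|)$, using that $u_-\le u_x,u_y\le u_+$ keeps all denominators bounded away from $0$; this is the source of the constant $C=C_{u_-,u_+}$. The residual terms then take the form $(\text{weight}-1)\int h\,\eta_x f\,d\nu$, each bounded by $C|\nabla^1_{x,y}u|\int|h|fd\nu$ after using $0\le\eta_x\le1$ and $f\ge0$. The only mildly delicate point is bookkeeping: one must make sure that the terms on $\{\eta_x=\eta_y\}$ genuinely drop out and that the two sign-opposite contributions on the diagonal-complementary sets are matched correctly so that the clean term $\int h\,\eta_x(f(\eta^{x,y})-f(\eta))d\nu$ appears with the right coefficient. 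I expect this combinatorial sign-and-weight accounting to be the main obstacle, though it is routine given the density formula already derived for the Kawasaki part; the bound on $R_1$ itself follows immediately from the uniform ellipticity hypothesis $0<u_-\le u_\pm<1$.
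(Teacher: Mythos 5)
Your strategy is essentially the paper's own: write the integral as a sum over configurations, apply the change of variables $\eta\mapsto\eta^{x,y}$ exploiting the invariance $h(\eta^{x,y})=h(\eta)$, and control the deviation of the density ratio $\nu(\eta^{x,y})/\nu(\eta)$ from $1$ by $C|u_x-u_y|$ using $u_-\le u_x,u_y\le u_+$. (The paper performs the swap on the whole term $\sum_\eta h\,\eta_y f\,\nu$ rather than only on $\{\eta_x=0,\eta_y=1\}$, but since $\eta_y$ vanishes off $\{\eta_y=1\}$ and the swap is trivial on $\{\eta_x=\eta_y=1\}$, this is the same computation; likewise, which of the two ratios $(1-u_x)u_y/\{u_x(1-u_y)\}$ or $(1-u_y)u_x/\{(1-u_x)u_y\}$ appears is just a matter of the direction of the substitution, and both are $1+O(|\nabla^1_{x,y}u|)$.)

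There is, however, one real slip in your error estimate. After the substitution, the remainder is not of the form $(\text{weight}-1)\int h\,\eta_x f\,d\nu$ but $(\text{weight}-1)\int h\,\eta_x f(\eta^{x,y})\,d\nu$: it is the \emph{swapped} density $f(\eta^{x,y})$, not $f(\eta)$, that sits inside $R_1$, and positivity of $f$ together with $0\le\eta_x\le 1$ does not let you dominate $\int|h|\,f(\eta^{x,y})\,d\nu$ by $\int|h|\,f\,d\nu$. To close the bound you must apply the change of variables $\eta\mapsto\eta^{x,y}$ a second time, inside the remainder, which converts $f(\eta^{x,y})$ back into $f(\eta)$ at the cost of one more density-ratio factor $\nu(\eta^{x,y})/\nu(\eta)=1+r_{x,y}(\eta)$; since $|r_{x,y}|\le C_0|\nabla^1_{x,y}u|\le 2C_0$, this extra factor is uniformly bounded and the stated estimate $|R_1|\le C|\nabla^1_{x,y}u|\int|h|\,f\,d\nu$ follows. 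This is precisely how the paper bounds $R_1$, so the repair is routine and uses only the tool you already invoked --- but as written, your final step does not go through.
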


\begin{proof}
First we write
$$
\int h(\eta_y-\eta_x) f d\nu=\sum_{\eta\in\mathcal{X}_N}
 h(\eta) (\eta_y-\eta_x) f(\eta)\nu(\eta).
$$
Then, by the change of variables $\eta^{x,y}\mapsto\eta$
and noting the invariance of $h$ under this change, we have
\begin{align*}
\sum_{\eta\in\mathcal{X}_N} h(\eta) \eta_y f(\eta)\nu(\eta)
 = \sum_{\eta\in\mathcal{X}_N} h(\eta) \eta_x f(\eta^{x,y})\nu(\eta^{x,y}).
\end{align*}
To replace the last $\nu(\eta^{x,y})$ by $\nu(\eta)$, we observe
\begin{align*}
\frac{\nu(\eta^{x,y})}{\nu(\eta)} & =
{\bf 1}_{\{\eta_x=1,\eta_y=0\}} \frac{(1-u_x)u_y}{u_x(1-u_y)}
+ {\bf 1}_{\{\eta_x=0,\eta_y=1\}} \frac{u_x(1-u_y)}{(1-u_x)u_y}
+ {\bf 1}_{\{\eta_x=\eta_y\}} \\
& = 1+r_{x,y}(\eta),
\end{align*}
with
\begin{align*}
r_{x,y}(\eta)= {\bf 1}_{\{\eta_x=1,\eta_y=0\}} \frac{u_y-u_x}{u_x(1-u_y)}
+ {\bf 1}_{\{\eta_x=0,\eta_y=1\}} \frac{u_x-u_y}{(1-u_x)u_y}.
\end{align*}
By the condition on $u$, this error is bounded as
$$
|r_{x,y}(\eta)| \le C_0  |\nabla_{x,y}^1 u|, \quad C_0 = C_{u_-,u_+}>0.
$$
These computations are summarized as
\begin{align*}
\int h(\eta_y-\eta_x) f d\nu=&\int h(\eta) \eta_x f(\eta^{x,y}) (1+ r_{x,y}(\eta)) d\nu
- \int h(\eta) \eta_x f(\eta) d\nu \\
=&\int h(\eta) \eta_x\big( f(\eta^{x,y})-f(\eta)\big) d\nu
+ \int h(\eta) \eta_x f(\eta^{x,y}) r_{x,y}(\eta) d\nu.
\end{align*}
For the second term denoted by $R_1$, applying the change of variables
$\eta^{x,y} \mapsto \eta$  again, we have
\begin{align*}
|R_1| & = \left|  \sum_{\eta\in\mathcal{X}_N} 
h(\eta) \eta_y f(\eta) r_{x,y}(\eta^{x,y}) \nu(\eta^{x,y}) \right| \\
& = \left|  \sum_{\eta\in\mathcal{X}_N} h(\eta) \eta_y f(\eta) r_{x,y}(\eta^{x,y}) \big(1+ r_{x,y}(\eta)\big)
 \nu(\eta)\right| \\
& \le C_0  |\nabla_{x,y}^1 u| (1+C_0  |\nabla_{x,y}^1 u| ) \int|h(\eta)| fd\nu
\le C  |\nabla_{x,y}^1 u| \int|h(\eta)| fd\nu,
\end{align*}
since $|\eta_y|\le 1$ and $ |\nabla_{x,y}^1 u|\le 2$.
This completes the proof.
\end{proof}

We apply Lemma \ref{lem:IP-0} to $V_a-V_a^\ell$ given in \eqref{eq:2.13}.
However,  $\om_x= (\eta_x-u_x)/\chi(u_x)$ in \eqref{eq:2.13} depends on
$u_x$ which varies in space.  We need to estimate the error caused by 
this spatial dependence.

\begin{lem} \label{lem:2.13}
{\rm (1)} Assume that $\nu=\nu_{u(\cdot)}$ and $h=h(\eta)$ satisfy the same conditions
as in Lemma \ref{lem:IP-0}.  Then, we have
$$
\int h(\om_y-\om_x) f d\nu=\int h(\eta) \frac{\eta_x}{\chi(u_x)} \big( f(\eta^{x,y})-f(\eta)\big) d\nu +R_2,
$$
and the error term $R_2=R_{2,x,y}$ is bounded as
$$
|R_2| \le C |\nabla_{x,y}^1 u| \int|h(\eta)| f d\nu,
$$
with some $C=C_{u_-,u_+}>0$. \\
{\rm (2)}  In particular, for $h_x^{\ell,j}$ defined in \eqref{eq:2.14}, we have
\begin{equation}  \label{eq:2.15}
\int h_x^{\ell,j}(\om_x- \om_{x+e_j}) f d\nu=-\int h_x^{\ell,j}\frac{\eta_x}{\chi(u_x)} 
\big( f(\eta^{x,x+e_j})-f(\eta)\big) d\nu +R_{2,x,j},
\end{equation}
and
$$
|R_{2,x,j}| \le C |\nabla_{x,x+e_j}^1 u| \int|h_x^{\ell,j}(\eta)| f d\nu.
$$
\end{lem}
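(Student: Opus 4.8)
The plan is to reduce both parts to the integration by parts formula already established in Lemma \ref{lem:IP-0}, viewing the discrepancy between the $\om$-variables and the raw $\eta$-variables as a perturbation controlled by the spatial gradient of $u$. Since $\om_z=\bar\eta_z/\chi(u_z)$ with $\bar\eta_z=\eta_z-u_z$, the starting point is the purely algebraic identity
\begin{align*}
\om_y-\om_x
&= \frac{1}{\chi(u_x)}(\eta_y-\eta_x) \\
&\quad + \eta_y\Bigl(\tfrac{1}{\chi(u_y)}-\tfrac{1}{\chi(u_x)}\Bigr)
+\Bigl(\tfrac{u_x}{\chi(u_x)}-\tfrac{u_y}{\chi(u_y)}\Bigr),
\end{align*}
which one verifies by expanding both sides. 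The first term on the right is a constant multiple (in the fixed index $x$) of the raw difference $\eta_y-\eta_x$ treated in Lemma \ref{lem:IP-0}, while the remaining two terms constitute the error caused by the spatially varying normalization $\chi(u_x)^{-1}$.

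First I would record that on $[u_-,u_+]$ the maps $u\mapsto 1/\chi(u)$ and $u\mapsto u/\chi(u)=1/(1-u)$ are $C^1$ with bounded derivatives, because $\chi(u)=u(1-u)$ is bounded away from zero there. Hence by the mean value theorem both error coefficients are bounded by $C\,|u_x-u_y|=C\,|\nabla_{x,y}^1 u|$ with $C=C_{u_-,u_+}$, and using $|\eta_y|\le 1$ together with the fact that $f$ is a probability density I would estimate the contribution of these two terms by $C\,|\nabla_{x,y}^1 u|\int|h|\,f\,d\nu$. Next I would apply Lemma \ref{lem:IP-0} to $\frac{1}{\chi(u_x)}\int h(\eta_y-\eta_x)\,f\,d\nu$, which is legitimate since for fixed $x$ the factor $\chi(u_x)^{-1}$ is a bounded constant that pulls out of the integral; this produces the main term $\int h\,\frac{\eta_x}{\chi(u_x)}\bigl(f(\eta^{x,y})-f(\eta)\bigr)\,d\nu$ and an error $R_1/\chi(u_x)$ inheriting the bound $C\,|\nabla_{x,y}^1 u|\int|h|\,f\,d\nu$. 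Collecting all three error contributions into $R_2$ yields Part (1).

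For Part (2) I would specialize to $y=x+e_j$ and $h=h_x^{\ell,j}$, which satisfies the required invariance $h_x^{\ell,j}(\eta^{x,x+e_j})=h_x^{\ell,j}(\eta)$ as noted below \eqref{eq:2.14}, and then flip the sign using $\om_x-\om_{x+e_j}=-(\om_{x+e_j}-\om_x)$; the error bound is unaffected. I do not expect a substantive obstacle here: the only point demanding care is the \emph{uniformity} of the constants in the mean value estimates, which is exactly what the uniform two-sided bound $u_-\le u\le u_+$ (from (A7)$'$) guarantees, so that $\chi(u_x)^{-1}$ and the two difference quotients are all controlled by a single constant $C_{u_-,u_+}$. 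The remainder of the argument is bookkeeping that threads these bounds through the decomposition above.
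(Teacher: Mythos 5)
Your proposal is correct and follows essentially the same route as the paper's proof: the same three-term decomposition of $\om_y-\om_x$ (main term $\chi(u_x)^{-1}(\eta_y-\eta_x)$ plus two gradient-type errors), the same application of Lemma \ref{lem:IP-0} with $\chi(u_x)^{-1}$ pulled out as a constant, and the same specialization $y=x+e_j$ with a sign flip for part (2). The only cosmetic difference is that you bound the error coefficients via the mean value theorem where the paper does the equivalent algebra by hand.
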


\begin{proof}
By the definition of $\om_x$, we have
\begin{align*}
\int h(\om_y-\om_x) f d\nu
& =\int h \left(\frac{\eta_y}{\chi(u_y)}-\frac{\eta_x}{\chi(u_x)}\right) f d\nu
- \int h \left(\frac{u_y}{\chi(u_y)}-\frac{u_x}{\chi(u_x)}\right) f d\nu \\
& =: I_1 - I_2.
\end{align*}
For $I_2$, we have
\begin{align*}
\left|\frac{u_y}{\chi(u_y)}-\frac{u_x}{\chi(u_x)}\right|
& \le \frac1{\chi(u_x)\chi(u_y)} \left\{\chi(u_x) |u_y-u_x|+|u_x||\chi(u_x)-\chi(u_y)|\right\} \\
& \le C |\nabla^1_{x,y}u|.
\end{align*}
On the other hand, $I_1$ can be rewritten as
\begin{align*}
I_1 
& = \int \frac{h}{\chi(u_x)} (\eta_y-\eta_x) f d\nu
+ \int h \left(\frac1{\chi(u_y)}-\frac1{\chi(u_x)}\right) \eta_y f d\nu \\
&=: I_{1,1}+I_{1,2}.
\end{align*}
For $I_{1,1}$, one can apply Lemma \ref{lem:IP-0} to obtain
$$
I_{1,1} = \frac1{\chi(u_x)} \int h(\eta^{x,y}) \eta_x\big( f(\eta^{x,y})-f(\eta)\big) d\nu +\frac1{\chi(u_x)}R_1.
$$
Finally for $I_{1,2}$, observe that
$$
\left|\frac1{\chi(u_y)}-\frac1{\chi(u_x)}\right|
= \frac{|\chi(u_x)-\chi(u_y)|}{\chi(u_x)\chi(u_y)}
\le C|\nabla_{x,y}^1u|.
$$
Therefore, we obtain (1).  Since $h_x^{\ell,j}(\eta^{x,x+e_j}) = h_x^{\ell,j}(\eta)$ for any $\eta\in\mathcal X_N$, 
taking $y=x+e_j$ and changing the sign of both sides,
(2) is immediate from (1).
\end{proof}

We can estimate the first term in the right hand side of \eqref{eq:2.15} by the Dirichlet
form of the Kawasaki part and obtain the next lemma.

\begin{lem}  \label{lem:3.5-b}
Let $\nu=\nu_{u(\cdot)}$ satisfy the condition in Lemma \ref{lem:IP-0}
with $y=x+e_j$.  Then, for every $\b>0$, we have
\begin{equation*}
\int h_x^{\ell,j}(\om_x-\om_{x+e_j}) f d\nu
\le \b \mathcal{D}_{K;x,x+e_j}(\sqrt{f};\nu) + \frac{C}\b
\int (h_x^{\ell,j})^2 f d\nu +R_{2,x,j},
\end{equation*}
with some $C=C_{u_-,u_+}>0$, where
\begin{align*}
\mathcal{D}_{K;x,y}(f;\nu) = \frac14\int_{\mathcal{X}_N} \{f(\eta^{x,y})-f(\eta)\}^2 d\nu,
\end{align*}
is a piece of the Dirichlet form $\mathcal{D}_{K}(f;\nu)$ corresponding to the Kawasaki
part considered on the bond $\{x,y\}: |x-y|=1$ and the error term $R_{2,x,j}$ is
given by Lemma \ref{lem:2.13}.
\end{lem}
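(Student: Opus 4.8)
The plan is to start from part (2) of Lemma \ref{lem:2.13}, which has already done the essential work of transferring the discrete gradient in $\om$ onto the density $f$ while producing the error term $R_{2,x,j}$ for free. Concretely, equation \eqref{eq:2.15} gives
\[
\int h_x^{\ell,j}(\om_x-\om_{x+e_j}) f d\nu=-\int h_x^{\ell,j}\frac{\eta_x}{\chi(u_x)} \big( f(\eta^{x,x+e_j})-f(\eta)\big) d\nu +R_{2,x,j},
\]
so it remains only to bound the first integral on the right by $\b\,\mathcal{D}_{K;x,x+e_j}(\sqrt{f};\nu)$ plus $\tfrac{C}{\b}\int (h_x^{\ell,j})^2 f d\nu$. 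Abbreviating $y=x+e_j$ and writing $g:=h_x^{\ell,j}\,\eta_x/\chi(u_x)$, the quantity to estimate is $-\int g\,\big(f(\eta^{x,y})-f(\eta)\big)\,d\nu$.

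First I would use the standard factorization $f(\eta^{x,y})-f(\eta)=\big(\sqrt{f(\eta^{x,y})}-\sqrt{f(\eta)}\big)\big(\sqrt{f(\eta^{x,y})}+\sqrt{f(\eta)}\big)$ together with the weighted Young inequality $|ab|\le \tfrac{\b}{4}a^2+\tfrac1{\b}b^2$ applied to $a=\sqrt{f(\eta^{x,y})}-\sqrt{f(\eta)}$ and $b=g\big(\sqrt{f(\eta^{x,y})}+\sqrt{f(\eta)}\big)$. Integrating the $a^2$-contribution against $\nu$ reproduces exactly $\tfrac{\b}{4}\cdot 4\,\mathcal{D}_{K;x,y}(\sqrt{f};\nu)=\b\,\mathcal{D}_{K;x,y}(\sqrt{f};\nu)$ by the very definition of the bond Dirichlet form. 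The leftover is $\tfrac1\b\int g^2\big(\sqrt{f(\eta^{x,y})}+\sqrt{f(\eta)}\big)^2 d\nu$, which I would dominate using $\big(\sqrt{f(\eta^{x,y})}+\sqrt{f(\eta)}\big)^2\le 2\big(f(\eta^{x,y})+f(\eta)\big)$.

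It then remains to show that $\int g^2 f(\eta)\,d\nu$ and $\int g^2 f(\eta^{x,y})\,d\nu$ are both controlled by $\int (h_x^{\ell,j})^2 f\,d\nu$. Since $\eta_x^2=\eta_x$ we have $g^2=(h_x^{\ell,j})^2\,\eta_x/\chi(u_x)^2$, and the factor $\chi(u_x)^{-2}$ is bounded because the hypothesis $u_-\le u_x\le u_+$ keeps $\chi(u_x)$ bounded away from $0$; hence the $f(\eta)$-term is immediately $\le C\int (h_x^{\ell,j})^2 f\,d\nu$. For the $f(\eta^{x,y})$-term I would apply the change of variables $\eta^{x,y}\mapsto\eta$, using the invariance $h_x^{\ell,j}(\eta^{x,y})=h_x^{\ell,j}(\eta)$ and the Radon--Nikodym ratio $\nu(\eta^{x,y})/\nu(\eta)=1+r_{x,y}(\eta)$ recorded in the proof of Lemma \ref{lem:IP-0}, whose factor is bounded under $u_-\le u_x,u_y\le u_+$; after the substitution $\eta_x$ is replaced by $\eta_y\le 1$, so this term is likewise $\le C\int (h_x^{\ell,j})^2 f\,d\nu$. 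Collecting the three contributions and setting $C=C_{u_-,u_+}$ gives the asserted inequality.

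The computations are essentially routine, and the only point requiring genuine care is the bookkeeping in the $f(\eta^{x,y})$-term: one must carry out the change of variables cleanly and absorb both the Radon--Nikodym factor $1+r_{x,y}$ and the occupation variable $\eta_y$ into the constant $C$, while \emph{crucially} exploiting the uniform lower bound on $\chi(u_x)$ guaranteed by {\rm (A7)}$'$ to control $\chi(u_x)^{-2}$. Everything else is a direct application of Young's inequality and the definition of the bond Dirichlet form.
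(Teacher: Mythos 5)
Your proposal is correct and follows essentially the same route as the paper's own proof: starting from \eqref{eq:2.15}, factoring $f(\eta^{x,y})-f(\eta)$ into the product of the sum and difference of square roots, applying the weighted Young inequality to produce $\b\,\mathcal{D}_{K;x,x+e_j}(\sqrt{f};\nu)$, and then handling the $f(\eta^{x,y})$-term by the change of variables $\eta^{x,y}\mapsto\eta$ with the invariance of $h_x^{\ell,j}$ and the bounded Radon--Nikodym ratio. The only cosmetic difference is that you carry the factor $\eta_x/\chi(u_x)$ explicitly through the Young inequality (noting $\eta_x^2=\eta_x$ and that $\eta_x$ becomes $\eta_y\le 1$ after the swap), whereas the paper absorbs it into the constant immediately; the substance is identical.
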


\begin{proof}
For simplicity, we write $y$ for $x+e_j$.
By decomposing $f(\eta^{x,y})-f(\eta) = \big( \sqrt{f(\eta^{x,y})}+\sqrt{f(\eta)}\big)
\big( \sqrt{f(\eta^{x,y})}-\sqrt{f(\eta)}\big)$, the first term in the right hand side
of \eqref{eq:2.15} is bounded by
$$
\b \mathcal{D}_{K;x,y}(\sqrt{f};\nu) + \frac{2}{\b \chi(u_x)^2}
\int (h_x^{\ell,j})^2 \{f(\eta^{x,y})+f(\eta)\} d\nu,
$$
for every $\b>0$.  Applying the change of variables $\eta^{x,y}\mapsto\eta$,
the second term of the last expression is equal to and bounded by
\begin{align*}
\frac{2}{\b \chi(u_x)^2} \int (h_x^{\ell,j})^2 (1+\dfrac{\nu(\eta^{x,y})}{\nu(\eta)})f d\nu
\le \frac{C}\b \int (h_x^{\ell,j})^2 f d\nu.
\end{align*}
This shows the conclusion.
\end{proof}

We now give the proof of Proposition \ref{prop:2.9}.
\begin{proof}[Proof of Proposition \ref{prop:2.9}]
By Lemma \ref{lem:3.5-b}, choosing $\b= \e_0 N^2/K$ with $\e_0>0$, we have
\begin{align}  \label{eq:2.16-a}
\int &(V_a-V_a^\ell) f d\nu 
= K \sum_{j=1}^d \sum_{x\in \T_N^d} \int h_x^{\ell,j} (\om_{x+e_j} -\om_x) f d\nu\\
& \le \e_0 N^2 \mathcal{D}_K(\sqrt{f};\nu) + \frac{CK^2}{\e_0 N^2}
\sum_{j=1}^d \sum_{x\in \T_N^d} \int (h_x^{\ell,j})^2 f d\nu
+ K \sum_{j=1}^d \sum_{x\in \T_N^d} R_{2,x,j}.  \notag
\end{align} 
For $R_{2,x,j}$, since $|\nabla_{x,x+e_j}^1u| \le CK/N$ from 
the condition (A4) combined with Proposition \ref{prop:nabla-u} stated below,
estimating $|h_x^{\ell,j}| \le 1+(h_x^{\ell,j})^2$, we have
$$
K |R_{2,x,j}| \le \frac{CK^2}N \int \left(1+(h_x^{\ell,j})^2\right) fd\nu.
$$
Thus, estimating $1/N\le 1$ for the second term of \eqref{eq:2.16-a} (though
this term has a better constant $CK^2/N^2$, the same
term with $CK^2/N$ arises from $K|R_{2,x,j}|$), we obtain
\begin{align}\label{est1}
\int (V_a-V_a^\ell) f d\nu 
\le&  \e_0 N^2 \mathcal{D}_K(\sqrt{f};\nu) + \frac{C_{\e_0} K^2}{N}\sum_{j=1}^d
\sum_{x\in \T_N^d}  \int (h_x^{\ell,j})^2 f d\nu 
 + CK^2N^{d-1}.
\end{align}

We assume without loss of generality that $N/2\ell$ is an integer
and that $n_1=(1,\dots, 1)$ for notational simplicity.
Then, for the second term of the right hand side in \eqref{est1}, 
we first decompose the sum $\sum_{x\in \T_N^d}$ as
$\sum_{y\in \La_{2\ell}}\sum_{z\in (2\ell) \T_N^d}$ regarding $x=z+y$.
Note that the random variables $\{h_{z+y}^{\ell, j}\}_{z\in (2\ell) \T_N^d}$
are independent for each $y\in\La_{2\ell}$. Recall that $d\mu=fd\nu$.
Then, applying the entropy inequality, we have
\begin{align*}
\sum_{x\in \T_N^d}  \int (h_x^{\ell,j})^2 f d\nu 
& \le \frac1\ga \sum_{y\in \La_{2\ell}} \left( H(\mu|\nu) +
\log \int \exp\left\{ \ga \sum_{z \in (2\ell)\T^d_N} (h_{z+y}^{\ell,j})^2 \right\} d\nu\right) \\
& = \frac1\ga (2\ell)^d \left( H(\mu|\nu) + \sum_{z \in (2\ell)\T^d_N} 
\log\int \exp\left\{ \ga (h_{z+y}^{\ell,j})^2 \right\} d\nu \right).
\end{align*}
Now we apply the concentration inequality (see Appendix B of \cite{JM1}) for the last term:

\begin{lem}[Concentration inequality]\label{lem:ci}
Let $\{X_i\}_{i=1}^n$ be independent random variables with values in the
intervals $[a_i,b_i]$.  Set $\bar{X}_i = X_i - E[X_i]$ and $\bar\si^2 = \sum_{i=1}^n (b_i-a_i)^2$.
Then, for every $\ga\in [0,(\bar\si^2)^{-1}]$, we have
$$
\log E\left[\exp \left\{\ga \left(\sum_{i=1}^n \bar{X}_i\right)^2\right\} \right]
\le 2\ga\bar\si^2 \, (\le 2).
$$
\end{lem}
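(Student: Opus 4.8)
The plan is to reduce the quadratic exponent $S^2$, where $S:=\sum_{i=1}^n \bar X_i$, to a \emph{linear} exponential moment, over which independence and boundedness give sub-Gaussian control. The first step is to record the sub-Gaussianity of $S$. Since each $\bar X_i$ has mean zero and takes values in an interval of length $b_i-a_i$, Hoeffding's lemma gives $E[e^{\la \bar X_i}]\le e^{\la^2(b_i-a_i)^2/8}$ for all $\la\in\R$; by the independence of the $\bar X_i$ this factorizes into
$$
E[e^{\la S}] \le \exp\Big\{\tfrac{\la^2}{8}\,\bar\si^2\Big\}, \qquad \la\in\R .
$$
Thus $S$ is sub-Gaussian with variance proxy $\bar\si^2/4$. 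Hoeffding's lemma itself I would either cite or derive in one line from the convexity of $x\mapsto e^{\la x}$ on $[a_i,b_i]$ followed by optimization of the resulting cumulant.

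The second step is the standard Gaussian linearization of the quadratic exponent. Let $Z$ be a standard Gaussian variable independent of the $X_i$, write $E_Z$ for the expectation in $Z$, and use the identity $e^{\ga s^2}=E_Z\big[e^{\sqrt{2\ga}\,sZ}\big]$, valid for every real $s$ and every $\ga\ge0$. Applying it with $s=S$, exchanging the two expectations by Fubini, and then inserting the sub-Gaussian bound conditionally on $Z$ (that is, with $\la=\sqrt{2\ga}\,Z$) gives
$$
E[e^{\ga S^2}] = E_Z\big[\,E[e^{\sqrt{2\ga}\,Z S}]\,\big] \le E_Z\big[e^{(\ga\bar\si^2/4)Z^2}\big].
$$
The right-hand side is a Gaussian integral: using $E_Z[e^{cZ^2}]=(1-2c)^{-1/2}$ for $c<\tfrac12$ with $c=\ga\bar\si^2/4$, we arrive at $\log E[e^{\ga S^2}]\le -\tfrac12\log(1-\ga\bar\si^2/2)$.

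It then remains to close the estimate via the elementary inequality $-\log(1-t)\le 2t$, valid for $t\in[0,\tfrac12]$. Setting $t=\ga\bar\si^2/2$, which lies in $[0,\tfrac12]$ exactly because of the hypothesis $\ga\le(\bar\si^2)^{-1}$, this yields $\log E[e^{\ga S^2}]\le \ga\bar\si^2/2\le 2\ga\bar\si^2$, and in the admissible range $2\ga\bar\si^2\le 2$, as claimed. The only genuinely delicate point is the bookkeeping of the range of $\ga$: the Gaussian representation is always available, but the integral $E_Z[e^{cZ^2}]$ converges only for $c<\tfrac12$, and it is precisely the constraint $\ga\le(\bar\si^2)^{-1}$ that forces $c=\ga\bar\si^2/4\le\tfrac14$, keeping us safely inside the convergence regime with room to spare. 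Everything else is elementary; in fact the argument produces the sharper constant $\tfrac12$ in place of the stated $2$, so the slack in the claimed bound is comfortable.
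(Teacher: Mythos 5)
Your proof is correct. There is, however, no internal proof in the paper to compare it against: the paper states this lemma and refers to Appendix B of \cite{JM1} for it. Your argument --- Hoeffding's lemma for each $\bar X_i$, factorization of $E[e^{\lambda S}]$ by independence, the Gaussian linearization $e^{\gamma s^2}=E_Z\big[e^{\sqrt{2\gamma}\,sZ}\big]$ with Tonelli justifying the interchange (the integrand is nonnegative), the exact Gaussian integral $E_Z[e^{cZ^2}]=(1-2c)^{-1/2}$ for $c<\tfrac12$, and finally $-\log(1-t)\le 2t$ on $[0,\tfrac12]$ --- is the standard chain for quadratic exponential moment bounds of sub-Gaussian variables, and is essentially the argument behind the cited reference. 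The bookkeeping is handled correctly at both delicate points: the hypothesis $\gamma\le(\bar\sigma^2)^{-1}$ forces $c=\gamma\bar\sigma^2/4\le\tfrac14$, well inside the convergence region of the Gaussian integral, and $t=\gamma\bar\sigma^2/2\le\tfrac12$ makes the logarithmic inequality applicable. Your final bound $\log E[e^{\gamma S^2}]\le \gamma\bar\sigma^2/2$ is in fact sharper than the stated $2\gamma\bar\sigma^2\,(\le 2)$, so the lemma follows a fortiori; this self-contained derivation could replace the external citation without any loss.
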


In fact, since $h_x^{\ell,j}$ is a weighted sum of independent
random variables, from this lemma, we have
$$
\log \int e^{\ga(h_x^{\ell, j})^2} d\nu \le 2,
$$
for every $\ga \le C_0/\si^2$, where $C_0$ is a universal constant
and $\si^2$ is the variance of $h_x^{\ell, j}$. 
On the other hand, it follows from the flow lemma that
$\si^2 \le C_d g_d(\ell)$.
Therefore, we have
\begin{align*}
\sum_{x\in \T_N^d}  \int (h_x^{\ell,j})^2 f d\nu 
\le \frac1\ga (2\ell)^d \left( H(\mu|\nu) + 2 (\tfrac{N}{2\ell})^d  \right).
\end{align*}
Thus, taking $\ga^{-1}= (C_dg_d(\ell))/C_0$, we have shown
\begin{align}  \label{eq:Va-1}
\int (V_a-V_a^\ell) f d\nu 
\le \e_0 N^2 \mathcal{D}_K(\sqrt{f};\nu) + 
\frac{CK^2 \ell^dg_d(\ell)}{N} \left( H(\mu|\nu) + \tfrac{N^d}{\ell^d}\right)+ CK^2N^{d-1},
\end{align} 
with some $C=C_{\e_0}>0$. For $\kappa>0$ small enough,
choose $\ell=N^{\frac1d(1-\k)}$ when $d\ge 2$ and
$\ell=N^{\frac12-\k}$ when $d=1$.  Then, recalling $1\le K \le \de(\log N)^{1/2}$
in the condition (A6)$_\de$, when $d\ge 2$,
we have
\begin{align}  \label{eq:Va-2}
\frac{K^2 \ell^dg_d(\ell)}{N}\le \de^2 N^{-\k}g_d(\ell) \log N \le 1,
\quad \tfrac{N^d}{\ell^d}= N^{d-1+\k},
\quad K^2N^{d-1} \le N^{d-1+\k},
\end{align} 
which shows \eqref{eq:2.11}.  When $d=1$,
\begin{align}  \label{eq:Va-3}
\frac{K^2 \ell^2}{N} \le \de^2N^{-2\k} \log N \le 1,
\quad \frac{N}\ell = N^{\frac12+\k}, \quad
K^2N^{d-1} \le N^\k.
\end{align} 
This shows the conclusion for $d=1$ and the proof of Proposition \ref{prop:2.9} is complete.
\end{proof}

\subsection{Estimate on $\int V_a^\ell f d\nu_{u(\cdot)}$}  \label{sec:2.6.3-a}

The next step is to estimate the integral $\int V_a^\ell f d\nu$.
We assume the same conditions as in Proposition \ref{prop:2.9} and therefore
Theorem \ref{thm:EstHent}.  We again decompose the sum $\sum_{x\in \T_N^d}$ as
$\sum_{y\in \La_{2\ell}}\sum_{z\in (2\ell) \T_N^d}$ and then, noting the $(2\ell)$-dependence 
of $\overleftarrow{(\tilde\om_\cdot^{(a)})}_{x,\ell} \overrightarrow{(\om_{\cdot+e})}_{x,\ell}$,
use the entropy inequality, the elementary inequality $ab\le (a^2+b^2)/2$
and the concentration inequality to show
\begin{align*}
\int V_a^\ell f d\nu
& \le \frac{K}\ga \sum_{y\in\La_{2\ell}} \left\{ H(\mu|\nu) +
\sum_{z\in (2\ell)\T_N^d} \log E^{\nu}[e^{\ga \overleftarrow{(\tilde\om_\cdot^{(a)})}_{z+y,\ell} 
\overrightarrow{(\om_{\cdot+e})}_{z+y,\ell}}] \right\}  \\
& \le \frac{K(2\ell)^d}\ga \left\{ H(\mu|\nu) + \frac{N^d}{(2\ell)^d} C_1 \ga \ell^{-d}\right\},
\end{align*}
for $\ga=c\ell^d$ with $c>0$ small enough.
Roughly saying, by the central limit theorem, both $\overleftarrow{(\tilde\om_\cdot^{(a)})}_{x,\ell}$
and $\overrightarrow{(\om_{\cdot+e})}_{x,\ell}$ behave as $C_2\ell^{-d/2}N(0,1)$ in law 
for large $\ell$, respectively, where $N(0,1)$ denotes a Gaussian random variable
with mean $0$ and variance $1$.  This effect is controlled by the concentration
inequality.  When $d\ge 2$, we chose $\ell = N^{\frac1d(1-\k)}$ 
so that we obtain
\begin{equation}  \label{eq:2V-2}
\int V_a^\ell f d\nu\le C_3K \big(H(\mu|\nu) + N^{d-1+\k}\big).
\end{equation}
When $d=1$, we chose $\ell= N^{\frac12-\k}$ so that we obtain \eqref{eq:2V-2}
with $N^{d-1+\k}$ replaced by $N^{\frac12+\k}$.

\subsection{Estimates on three other terms $V_b, V_c, V_1$}  \label{sec:3.3}

Two terms $V_b$ and $V_c$ defined in \eqref{eq:cor26} can be 
treated exactly in a same way as $V_a$  and we have similar results 
to Proposition \ref{prop:2.9} and
\eqref{eq:2V-2} for these two terms.  

The term $V_1$ requires more careful study.  As 
we pointed out at the beginning of this section,
the condition (A4) combined with Proposition \ref{prop:nabla-u} shows that
\begin{equation}  \label{eq:3-NK}
N^2 (u_y(t)-u_x(t))^2 \le CK^2, \qquad t \in [0,T], \;
x,y\in \T_N^d:|x-y|=1.
\end{equation}
Therefore, the front factor behaves like $K^2$ instead of $K$.
Noting this, for the replacement of $V_1$ with $V_1^\ell$, we have a similar 
bound \eqref{eq:Va-1} with $K$ replaced by $K^2$.  However, since
$K\le \de (\log N)^{1/2}$, one can absorb even $K^2$ by the factor $N^\k$ with $\k>0$ as in
\eqref{eq:Va-2} and \eqref{eq:Va-3} (with $K$ replaced by $K^2$).
Thus, the bound \eqref{eq:2.11} in Proposition \ref{prop:2.9} holds also for
$V_1-V_1^\ell$ in place of $V_a-V_a^\ell$.

On the other hand, \eqref{eq:2V-2} should be modified as
\begin{equation}  \label{eq:2V-2-b}
\int V_1^\ell f d\nu\le C_4K^2 \big(H(\mu|\nu) + N^{d-1+\k}\big).
\end{equation}
Note that \eqref{eq:3-NK} holds with $K$ instead of $K^2$ in an averaged sense
in $(t,x)$ as we will see in Lemma \ref{lem:4.4}.  But this is not enough to improve
\eqref{eq:2V-2-b} with $K^2$ to $K$.

\subsection{Completion of the proof of Theorem \ref{thm:EstHent}}  \label{sec:3.4}

Finally, from Proposition \ref{thm:4.2}, Proposition
\ref{prop:2.9} (for $V_a, V_b, V_c, V_1$) and \eqref{eq:2V-2} (for $V_a, V_b, V_c$),
\eqref{eq:2V-2-b} (for $V_1$), choosing $\e_0>0$ small enough such that $4\e_0<2$, 
we obtain
$$
\frac{d}{dt} H(\mu_t|\nu_t) \le C K^2 H(\mu_t|\nu_t) +  O(N^{d-\a}),
$$
with some $0<\a<1$ ($\a=1-\k$) when $d\ge 2$ and $0<\a<1/2$ ($\a=1/2-\k$) when $d=1$.
Thus, Gronwall's inequality shows
$$
H(\mu_t|\nu_t) \le \left( H(\mu_0|\nu_0)+ t O(N^{d-\a}) \right) e^{CK^2t}.
$$
Noting $H(\mu_0|\nu_0) = O(N^{d-\de_0})$ with $\de_0>0$ and $e^{CK^2t} \le N^{Ct\de^2}$ 
from $1\le K\le \de (\log N)^{1/2}$ in the condition (A6)$_\de$, 
this concludes the proof of Theorem \ref{thm:EstHent}, if we choose
$\de>0$ small enough.

\begin{rem}  \label{rem:3.1}
The above argument actually implies  $H(\mu_t^N|\nu_t^N)=O(N^{d-\delta_*})$
for some $\delta_*>0$.  
From Theorem \ref{prop:4.4-2}, the probability in the left hand side of \eqref{eq:thm1.1}
is bounded above by $\mu_t^N(\mathcal A_{N,t}^{\e/2})$ for $N$ sufficiently large,
recall $\mathcal A_{N,t}^{\e}$ defined below \eqref{eq:4.18}.   On the other hand, 
from the proof of Proposition \ref{prop:4.4-1}, there exists a constant
$C_0$, which depends only on $\|\varphi\|_\infty$, such that
$\nu_t^N(\mathcal A_{N,t}^{\e/2})\le e^{-C_0\e^2N^d}$.
These estimates together with the entropy inequality show that
\begin{align*}
P\left( \left|\lan \a^N(t),\fa\ran - \lan \chi_{\Ga_t},\fa\ran \right|>\e\right)
\le \frac{C}{\e^2} N^{-\de_*},
\end{align*}
for $N$ sufficiently large.
This gives the rate of convergence in the limit \eqref{eq:thm1.1}.
\end{rem}

\section{Motion by mean curvature from Glauber-Kawasaki dynamics}
\label{sec:4.4}

The rest is to study the asymptotic behavior as $N\to\infty$ of the solution $u^N(t)$
of the discretized hydrodynamic equation \eqref{eq:HD-discre}, which appears in \eqref{eq:4.18}.
We also give a few estimates on $u^N(t)$ which were already used in Section 3. 

Theorem \ref{prop:4.4-2} formulated below is purely a PDE type result, 
which establishes the sharp interface limit for $u^N(t)$ and leads to the 
motion by mean curvature.  Recall that we assume $d\ge 2$.  A smooth family of 
closed hypersurfaces $\{\Ga_t\}_{t\in [0,T]}$ in $\T^d$ is called the motion 
by mean curvature flow starting from $\Ga_0$, if it satisfies
\begin{align}  \label{eq:4.MMC}
\begin{aligned}
& V = \k, \quad t \in [0,T],\\
& \Ga_t|_{t=0}=\Ga_0,
\end{aligned}
\end{align}
where $V$ is the inward normal velocity of $\Ga_t$ and $\k$ is the mean curvature
of $\Ga_t$ multiplied by $(d-1)$.  It is known that if $\Ga_0$ is a smooth hypersurface without boundary,
then there exists a unique smooth solution of \eqref{eq:4.MMC} starting from $\Ga_0$ on $[0,T]$ with 
some $T>0$; cf.\ Theorem 2.1 of \cite{CHL} and see Section 4 of \cite{F16} for related
references.  In fact, by using the local coordinate $(d,s)$
for $x$ in a tubular neighborhood of $\Ga_0$ where $d=d(x)$ is the signed distance from
$x$ to $\Ga_0$ and $s=s(x)$ is the projection of $x$ on $\Ga_0$, $\Ga_t$ is expressed
as a graph over $\Ga_0$ and represented by $d=d(s,t)$, $s\in \Ga_0$, $t\in [0,T]$, and
the equation \eqref{eq:4.MMC} for $\Ga_t$ can be rewritten as a quasilinear parabolic equation
for $d=d(s,t)$.  A standard theory of quasilinear parabolic equations shows the 
existence and uniqueness of smooth local solution in $t$.  We cite \cite{Be} as an
expository reference for the definitions of mean curvature, motion by mean curvature
flow and Allen-Cahn equation.  As we mentioned above, Section 4 of \cite{F16}
also gives a brief review of these topics.

The limiting behavior of $u^N(t)$ as $N\to\infty$ is given by the following theorem.
Recall that the solution $\{u^N(t,x), x\in \T_N^d\}$ is extended as a step function
$\{u^N(t,v), v\in \T^d\}$ on $\T^d$ as in \eqref{eq:2.uN-m}.

\begin{theorem}  \label{prop:4.4-2}
Under the conditions {\rm (A2), (A3)} and {\rm (A5)}, for $t\in [0,T]$ and $v\notin\Ga_t$,
$u^N(t,v), v\in \T^d$ converges as $N\to\infty$ to $\chi_{\Ga_t}(v)$ 
defined by \eqref{eq:1.chiga} from the hypersurface $\Ga_t$ in $\T^d$ moving
according to the motion by mean curvature \eqref{eq:4.MMC}.
\end{theorem}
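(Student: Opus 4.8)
The plan is to prove Theorem~\ref{prop:4.4-2} by the classical super- and sub-solution method together with the comparison principle for the discretized hydrodynamic equation \eqref{eq:HD-discre}. Two structural features make this route available. First, under (A3) the reaction $f^N(x,u)$ is nondecreasing in each off-site variable $u_{x+n_1},u_{x+n_2}$, since
\[
\frac{\partial f^N(x,u)}{\partial u_{x+n_k}}
= (1-u_x)\,\partial_{u_{n_k}}c^+ - u_x\,\partial_{u_{n_k}}c^- \ge 0,
\quad k=1,2,
\]
so the ODE system $\dot u_x = \De^N u + K f^N(x,u)$ is quasi-monotone (cooperative) and hence order preserving; this is the comparison theorem already invoked after Theorem~\ref{thm:EstHent}. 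Second, under (A2) the limiting reaction $f$ of \eqref{eq:4.f} is bistable and balanced, which is precisely the setting in which the Allen--Cahn equation \eqref{eq:RD-2}, $\partial_t u = \De u + K f(u)$, generates a mean-curvature interface as $K\to\infty$. I would transcribe that well-known continuum analysis to the lattice, the new point being to carry it out uniformly in the coupled scaling $K=K(N)$ with $1/N$ the lattice spacing.

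For the construction, let $m=m(\xi)$ be the standing wave: the solution of $m''+f(m)=0$ with $m(-\infty)=\a_1$, $m(+\infty)=\a_2$, whose speed vanishes by the balance condition and which converges to its endpoints exponentially because $f'(\a_1),f'(\a_2)<0$. Set the interface width $\e:=K^{-1/2}$ and let $d(v,t)$ be the signed distance to $\Ga_t$, smooth with $|\na d|=1$ in a fixed tubular neighborhood $\{|d|<r_0\}$. For the leading ansatz $m(d/\e)$ one computes, with the \emph{continuum} operator,
\[
\partial_t u - \De u - K f(u)
= \frac{m'(d/\e)}{\e}\,\big(\partial_t d - \De d\big) + O(1),
\]
and since $\partial_t d=-V$ and $\De d=-\k$ on $\Ga_t$, the singular $O(\e^{-1})$ term cancels exactly when $V=\k$, i.e.\ under \eqref{eq:4.MMC}. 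I would then set
\[
\bar u^N(t,v) = m\!\Big(\tfrac{d(v,t)+\e p(t)}{\e}\Big) + \e\,q(t),
\qquad
\underline u^N(t,v) = m\!\Big(\tfrac{d(v,t)-\e p(t)}{\e}\Big) - \e\,q(t),
\]
choosing a shift $p(t)$ and a correction amplitude $q(t)$ (solutions of an elementary linear ODE governed by the spectral gap $-f'(\a_i)$) so that the first-order term absorbs the residual and $\bar u^N,\underline u^N$ become genuine super- and sub-solutions of \eqref{eq:HD-discre}; outside $\{|d|<r_0\}$ both are cut off to the constants $\a_1,\a_2$, which are strict super/sub-solutions there because $f'(\a_i)<0$.

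The verification amounts to dominating three errors by the construction. (i) The discretization error $\De^N w - \De w = O(N^{-2}\|\partial^4 w\|)$ applied to $w=m(d/\e)$ is $O(N^{-2}\e^{-4})=O(N^{-2}K^2)$, and $K\le\de(\log N)^{1/2}$ gives $N^{-2}K^2 \le \de^2 N^{-2}\log N \to 0$. (ii) The reaction mismatch, replacing $f^N(x,u)$ by $f(u_x)=E^{\nu_{u_x}}[(1-2\eta_0)c]$ from \eqref{eq:4.f}, costs $O(K\cdot N^{-1}\e^{-1})=O(N^{-1}K^{3/2})\to 0$, because the ansatz varies on the macroscopic scale $\e=K^{-1/2}\gg N^{-1}$, so $u_{x+n_k}-u_x=O(N^{-1}K^{1/2})$. (iii) The geometric residual $\partial_t d-\De d$ is $O(d)=O(\e)$ near $\Ga_t$ and is absorbed by the $m'$-weighting and the correction $q(t)$. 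With $p(t),q(t)$ beating all three, $\bar u^N$ and $\underline u^N$ are super/sub-solutions on $[0,T]$; by (A5) they sandwich the initial data, $\underline u^N(0)\le u^N(0)\le \bar u^N(0)$, and the comparison principle propagates this ordering to all $t\in[0,T]$. Since $\e p(t),\e q(t)\to 0$, for fixed $v\notin\Ga_t$ both bounds tend to $\chi_{\Ga_t}(v)$, which is the assertion.

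The main obstacle is the joint limit. The shift $\e p(t)$ and correction $\e q(t)$ must be large enough to dominate the discretization error $N^{-2}K^2$ and the reaction-mismatch error $N^{-1}K^{3/2}$, yet small enough that $\e p(t),\e q(t)\to 0$, so that the two sandwiching profiles collapse onto the step function $\chi_{\Ga_t}$ of \eqref{eq:1.chiga}. The constraint $1\le K\le\de(\log N)^{1/2}$ is exactly what renders these competing demands compatible, and the smallness of $\de$ (depending on $T$ through the factor $e^{CT}$ in $p,q$) should enter here. A secondary difficulty is that $d(v,t)$ is smooth only in a tubular neighborhood and only up to the time $T$ for which $\Ga_t$ remains smooth, so the profile must be patched to $\a_1,\a_2$ away from the interface; the stability $f'(\a_i)<0$ keeps these constants valid super/sub-solutions, closing the argument.
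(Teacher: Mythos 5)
Your overall strategy is the same as the paper's: build super- and sub-solutions of the discretized equation \eqref{eq:HD-discre} from a one-dimensional wave profile composed with (a smoothly saturated modification of) the signed distance to $\Ga_t$, invoke the discrete comparison principle (Proposition \ref{prop:Comparison}; your verification of quasi-monotonicity from (A3) is exactly the paper's observation), estimate the discretization and reaction-mismatch errors (your (i), (ii) match the bounds $O(K^{3/2}/N)$, $K\cdot O(K^{1/2}/N)$ in the proof of Theorem \ref{thm:3.4}), and let the envelope collapse onto $\chi_{\Ga_t}$. The substantive difference is the ansatz: the paper, following \cite{CHL}, uses the traveling wave $U(\,\cdot\,;\pm\de)$ of the \emph{perturbed} nonlinearity $f\pm\de$, namely $\rho_\pm(t,v)=U\big(K^{1/2}(d\pm K^{-a}e^{m_2t});\pm K^{-1}m_3e^{m_2t}\big)$, whereas you use the balanced standing wave $m$ with an additive vertical correction $\pm\e q(t)$.

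That difference is precisely where your verification has a gap. The vertical correction generates a fourth error term, absent from your list (i)--(iii):
\begin{equation*}
K\bigl[f\bigl(m(z)+\e q\bigr)-f\bigl(m(z)\bigr)\bigr]
= K^{1/2}q\,f'\bigl(m(z)\bigr)+O(q^2),
\qquad z=\tfrac{d+\e p}{\e}.
\end{equation*}
In the far field this has the favorable sign because $f'(\a_i)<0$ (this is the only place your ``spectral gap'' mechanism operates), but in the transition layer $m(z)$ crosses the unstable zone where $f'>0$, so for the supersolution this term is \emph{negative}; with the minimal admissible size $q\sim A\e$ it is of order $A$, i.e.\ at least as large as every error you do list, so it cannot be ignored. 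The construction can be rescued, but only by coupling $p$ and $q$: where $f'(m(z))>0$ one has $m'(z)\ge c_1>0$, so the shift term $m'(z)\dot p(t)$ dominates provided $\dot p\ge C(\e^{-1}q+1+p)$, and one must then check that $p(T)$ stays bounded (so $\e p(T)\to0$) and that the far-field inequality $\e^{-1}q\,\b\ge C e^{-cz_0}(1+p(T))$ remains consistent; this near-field/far-field dichotomy is the actual heart of the proof. It is exactly what the paper's perturbed-wave ansatz bypasses: there the perturbation enters the equation as the uniformly positive constant $K\de=m_3e^{m_2t}$ plus $U_\de\,\partial_t\de\ge0$ (using $U_\de\ge0$), which absorbs all bounded errors regardless of the sign of $f'$. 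Two further corrections: condition (A5) is stated as the sandwich \eqref{eq:3.3-com} at $t=0$ for the paper's specific $\rho_\pm$, so you need the (easy) check that your envelope contains theirs initially; and your closing claim that the constraint $K\le\de(\log N)^{1/2}$ (with $\de$ small depending on $T$) is what makes the PDE sandwich work is a misattribution --- the paper's Theorem \ref{thm:3.4} only needs $K\le CN^{2/3}$, and the logarithmic restriction with $\de=\de(T)$ is forced by the probabilistic entropy estimate (the Gronwall factor $e^{CK^2t}$ in Theorem \ref{thm:EstHent}), not by the sharp interface limit.
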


Combining the probabilistic result \eqref{eq:4.18} with this PDE type result, we have proved that
$\a^N(t)$ converges to $\chi_{\Ga_t}$ in probability when multiplied by a test function
$\fa\in C^\infty(\T^d)$.  This completes the proof of our main Theorem \ref{thm:4.10}.  

Under the condition (A7)$'$, especially with $u_\pm$ chosen as  
$0<u_-<\a_1< \a_2<u_+<1$, by the comparison theorem (Proposition
\ref{prop:Comparison} below) for the discretized hydrodynamic
equation \eqref{eq:HD-discre} and noting that, if $u^N(0,x)\equiv u_-$ 
(or $u_+$), the solution $u^N(t,x)\equiv u^N(t)$ of  \eqref{eq:HD-discre}
increases in $t$ toward $\a_1$ (or decreases to $\a_2$) by the condition (A2),
the condition $u^N(0,x)\in [u_-,u_+]$ implies the same for $u^N(t,x)$.
In particular, this shows $\chi(u^N(t,x))\ge c>0$ with some $c>0$ for all
$t\in [0,T]$ and $x\in \T_N^d$.

\subsection{Estimates on the solutions of the discretized hydrodynamic equation}
\label{sec:2.5}

We give estimates on the gradients of the solutions $u(t)\equiv u^N(t) =\{u_x(t)\}_{x\in\T_N^d}$ of the
discretized hydrodynamic equation \eqref{eq:HD-discre}.  These were used
to estimate the contribution of  the first term in \eqref{eq:2.int} and also
$R_{2,x,j}$ in \eqref{eq:2.16-a} as we already mentioned.
Let $p^N(t,x,y)$ be the discrete heat kernel corresponding to
$\De^N$ on $\T_N^d$.  Then, we have the following  global estimate in $t$.

\begin{lem} \label{lem:2.1}
There exist $C, c>0$ such that
$$
\|\nabla^N p^N(t,x,y)\|
\le \frac{C}{\sqrt{t}} p^N(ct,x,y), \quad t>0,
$$
where
$
\nabla^N u(x) = \left\{ N(u(x+e_i)-u(x))\right\}_{i=1}^d
$
and $\|\cdot\|$ stands for the standard Euclidean norm of $\R^d$ as we defined before.
\end{lem}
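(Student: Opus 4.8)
The plan is to reduce the bound to a classical Gaussian gradient estimate for the heat kernel of the rate-one discrete Laplacian, and then to exploit the exact scaling relation between $\De^N$ and that Laplacian so that the factor $N$ hidden in $\nabla^N$ is absorbed by the factor $1/\sqrt t$.

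First I would record the scaling identity. Writing $\bar\De$ for the rate-one discrete Laplacian on $\T_N^d$, i.e.\ $\bar\De u(x)=\sum_{i=1}^d(u(x+e_i)+u(x-e_i)-2u(x))$, one has $\De^N=N^2\bar\De$, so the heat kernels are related by $p^N(t,x,y)=\bar p(N^2t,x,y)$, where $\bar p$ is the transition probability of the continuous-time rate-one simple random walk generated by $\bar\De$. Setting $s=N^2t$, the operator $\nabla^N$ contributes a front factor $N$, while the target factor $C/\sqrt t$ equals $CN/\sqrt s$; the two powers of $N$ cancel and the claim becomes the $N$-free inequality
\[
| \bar p(s,x+e_i,y)-\bar p(s,x,y) | \le \frac{C}{\sqrt s}\,\bar p(cs,x,y),
\qquad s>0,\ 1\le i\le d,
\]
required uniformly in $N$ and in $x,y\in\T_N^d$.

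By translation invariance $\bar p(s,x,y)=\bar q(s,y-x)$, and I would pass from the torus to $\Z^d$ by periodization, $\bar q(s,z)=\sum_{k\in\Z^d}q(s,z+kN)$ with $q$ the kernel on $\Z^d$, so that it suffices to prove the gradient bound for $q$ together with a matching Gaussian lower bound for $\bar p$ that lets the periodized tail be reabsorbed into $\bar p(cs,\cdot)$ on the right. For $q$ I would use the Fourier representation
\[
q(s,z)=\frac1{(2\pi)^d}\int_{[-\pi,\pi]^d}e^{i\theta\cdot z}\,e^{-s\lambda(\theta)}\,d\theta,
\qquad \lambda(\theta)=\sum_{j=1}^d 2(1-\cos\theta_j),
\]
so that the discrete derivative inserts the factor $e^{-i\theta_i}-1$, whose modulus is at most $C|\theta_i|$. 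Integrated against the weight $e^{-s\lambda(\theta)}$, with $\lambda(\theta)$ comparable to $|\theta|^2$ near the origin, this extra factor of $\theta_i$ produces exactly the gain of $s^{-1/2}$.

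The step I expect to be the main obstacle is upgrading this sup-type gain to the pointwise Gaussian bound carrying the spatial decay $e^{-c|z|^2/s}$, that is, producing $\bar p(cs,x,y)$ on the right rather than merely a power $s^{-(d+1)/2}$. For this I would shift each contour $\theta_j\mapsto\theta_j+i\xi_j$ with $\xi$ proportional to $z/s$, using the analyticity and $2\pi$-periodicity of the integrand to extract the Gaussian factor while retaining the $\theta_i$-gain; the bookkeeping of this saddle-point estimate, and the verification that all constants stay uniform over the whole range $s\in(0,N^2T]$ — from the singular short-time regime through the mixing regime where $\bar p$ saturates near $N^{-d}$ — is the technical heart of the argument. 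Alternatively, one can bypass the explicit contour computation by invoking the standard two-sided Aronson estimates and caloric gradient estimates for discrete heat kernels, reducing to the short-time case through the semigroup identity $\bar p(s)=\bar p(s/2)*\bar p(s/2)$, placing the discrete gradient on one factor and dominating the resulting convolution by $\bar p(cs,\cdot)$.
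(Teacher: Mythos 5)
Your reduction---the scaling identity $p^N(t,x,y)=\bar p(N^2t,x,y)$ for the rate-one kernel $\bar p$, followed by periodization to $\Z^d$---is exactly the skeleton of the paper's proof. The paper then finishes in one stroke by citing the gradient estimate of Delmotte--Deuschel in the form $\|\nabla p(s,x,y)\|\le C(1\vee s)^{-1/2}\,p(cs,x,y)$ on $\Z^d$, and concluding via the elementary bookkeeping $N/\sqrt{1\vee N^2t}\le 1/\sqrt t$. So the only genuinely new content in your proposal is a self-contained Fourier proof of the lattice estimate, and that is where there is a real gap: by ``cancelling the two powers of $N$'' you commit yourself to proving a gain of $s^{-1/2}$ on the unit lattice uniformly down to $s\to 0$, and no such gain exists.

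Concretely, take $y=x+e_1+e_2$. For $s\le 1$ one has $\bar p(s,x,y)\asymp s^2$ while $\bar p(s,x+e_1,y)\asymp s$ (one fewer jump is needed), so the discrete gradient is $\asymp s$, whereas your right-hand side is $\tfrac{C}{\sqrt s}\,\bar p(cs,x,y)\asymp Cc^2 s^{3/2}$; the inequality $s\le Cc^2s^{3/2}$ fails as $s\to0$ for any choice of constants. Your own Fourier computation detects the same obstruction: the bound $\int_{[-\pi,\pi]^d}|\theta_i|e^{-s\lambda(\theta)}\,d\theta\asymp s^{-(d+1)/2}$ holds only once the weight localizes near $\theta=0$, i.e.\ for $s\gtrsim 1$; for $s\le 1$ this integral is of order one and there is no $s^{-1/2}$ gain at all. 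This is precisely why the estimate the paper imports carries the prefactor $(1\vee s)^{-1/2}$ rather than $s^{-1/2}$: on the unit lattice there is no parabolic smoothing below time one, and the short-time regime $N^2t\le 1$ of the Lemma is handled not by smoothing but by the trivial inequality $N\le 1/\sqrt t$---exactly the information your cancellation step discards. In fact the counterexample shows more: at lattice distance $\ge 2$ and $s\to 0$ no pointwise bound of the gradient by the kernel can hold with any prefactor, so the pointwise statement is really meaningful only in the regime $N^2t\gtrsim 1$ (its use in Proposition \ref{prop:nabla-u}, where it is summed over $y$, is insensitive to this, since the total-variation bound $\sum_y\|\nabla^N p^N(t,x,\cdot)\|\le 2\sqrt d\,N\le 2\sqrt d/\sqrt t$ covers $N^2t\le1$). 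Thus the step you flag as ``the technical heart''---uniformity of the saddle-point estimate through the singular short-time regime---is not a bookkeeping difficulty but an impossibility. The repair is to prove the $(1\vee s)^{-1/2}$ version for $s\ge 1$, where your multiplier bound, contour shift, and the matching Gaussian lower bound you mention are all available and give a legitimate constant-coefficient alternative to the citation, and then to conclude exactly as the paper does.
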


\begin{proof}
Let $p(t,x,y)$ be the heat kernel corresponding to the discrete Laplacian
$\De$ on $\Z^d$.  Then, we have the estimate
$$
\|\nabla p(t,x,y)\|
\le \frac{C}{\sqrt{1\vee t}} p(ct,x,y), \quad t >0, \; x, y \in \Z^d,
$$
with some constants $C, c>0$, independent of $t$ and $x, y$, where $\nabla=\nabla^1$.
For example, see (1.4) in Theorem 1.1 of \cite{DD} which discusses more general case
with random coefficients; see also \cite{SZ}. 
Then, since
$$
p^N(t,x,y) = \sum_{k\in N\Z^d} p(N^2t,x,y+k),
$$
the conclusion follows.
\end{proof}

We have the following $L^\infty$-estimate on the gradients of $u^N$.

\begin{prop}  \label{prop:nabla-u}
For the solution $u^N(t,x)$  of \eqref{eq:HD-discre}, we have the estimate
$$
\|\nabla^N u^N(t,x)\| \le K(C_0+C \sqrt{t}), \quad t >0,
$$
if $\|\nabla^N u^N(0,x)\| \le C_0 K$ holds.
\end{prop}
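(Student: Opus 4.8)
The plan is to treat the discretized hydrodynamic equation \eqref{eq:HD-discre} through its mild (Duhamel) formulation. Writing $p^N(t,x,y)$ for the heat kernel of $\De^N$ on $\T_N^d$ and abbreviating $F^N(s,y):=f^N(y,u^N(s))$, the solution satisfies
\[
u^N(t,x)=\sum_{y\in\T_N^d}p^N(t,x,y)\,u^N(0,y)
+K\int_0^t\sum_{y\in\T_N^d}p^N(t-s,x,y)\,F^N(s,y)\,ds .
\]
Applying the discrete gradient $\nabla^N$ in the $x$-variable splits $\nabla^N u^N(t,x)$ into a linear piece carrying the initial data and a Duhamel piece carrying the reaction term, and the point of the argument is that these two pieces must be handled by genuinely different mechanisms.

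First I would bound the linear piece. Since $p^N(t,x,y)$ is translation invariant, i.e.\ depends on $x-y$ only, the operator $\nabla^N_x$ commutes with the convolution: a discrete shift of the summation variable gives the identity
\[
\nabla^N_x\sum_{y\in\T_N^d}p^N(t,x,y)\,u^N(0,y)
=\sum_{y\in\T_N^d}p^N(t,x,y)\,\nabla^N u^N(0,y).
\]
Because $p^N(t,x,\cdot)\ge0$ and $\sum_y p^N(t,x,y)=1$, the triangle inequality (Jensen) yields that the Euclidean norm of this term is at most $\sum_y p^N(t,x,y)\,\|\nabla^N u^N(0,y)\|\le C_0K$, using the hypothesis $\|\nabla^N u^N(0,x)\|\le C_0K$. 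The crucial feature is that the derivative has been moved onto the \emph{data}, so no singular kernel factor enters and the bound is uniform in $t$.

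For the Duhamel piece I would instead keep the derivative on the kernel and invoke Lemma \ref{lem:2.1}. Since $u^N(t,x)\in[0,1]$ for all $t,x$, the polynomial $f^N$ is bounded, say $|F^N(s,y)|\le M$ uniformly in $s,y$. Then
\[
\Big\|\,K\int_0^t\sum_{y}\nabla^N_x p^N(t-s,x,y)\,F^N(s,y)\,ds\,\Big\|
\le KM\int_0^t\frac{C}{\sqrt{t-s}}\sum_{y}p^N(c(t-s),x,y)\,ds ,
\]
and using $\sum_y p^N(c(t-s),x,y)=1$ the remaining integral equals $\int_0^t(t-s)^{-1/2}\,ds=2\sqrt t$, giving a bound of the form $C'K\sqrt t$. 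Adding the two contributions produces $\|\nabla^N u^N(t,x)\|\le K(C_0+C\sqrt t)$ after renaming constants.

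The only genuine subtlety, and the reason for the asymmetric treatment, is the $t^{-1/2}$ singularity in the kernel–gradient estimate of Lemma \ref{lem:2.1}. Differentiating the kernel in the linear term as well would leave an un-integrated factor $t^{-1/2}$ that blows up as $t\downarrow0$ and destroys the uniform-in-$t$ control; transferring that derivative onto the initial data removes the singularity there. In the Duhamel term the extra time integration renders the same singularity integrable, so one can afford to pay it on the kernel. Verifying the commutation identity (a discrete summation by parts using only translation invariance of $p^N$) and the uniform boundedness of $f^N$ along the solution are then the routine remaining checks.
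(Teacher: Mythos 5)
Your proposal is correct and takes essentially the same approach as the paper: Duhamel's formula for \eqref{eq:HD-discre}, the uniform boundedness of $f^N$ along the solution, and the kernel-gradient estimate of Lemma \ref{lem:2.1} applied to the reaction term. The paper's proof is only two lines long, and your explicit treatment of the initial-data term---commuting $\nabla^N$ with the heat kernel by translation invariance so that the $t^{-1/2}$ singularity never acts on that piece, which is what produces the uniform $C_0K$ contribution---is exactly the detail the paper leaves implicit.
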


\begin{proof}
From Duhamel's formula, we have
$$
u^N(t,x) = \sum_{y\in\T_N^d} u^N(0,y)p^N(t,x,y)
+K \int_0^t ds \sum_{y\in\T_N^d} f^N(y,u^N(s)) p^N(t-s,x,y).
$$
By noting $f^N(x,u)$ is bounded and applying 
Lemma \ref{lem:2.1}, we obtain the conclusion.
\end{proof}

It is expected that $\nabla^N u^N$ behaves as $\sqrt{K}$ near the interface 
by the scaling property (see Section 4.2 of \cite{F18} and also as Theorem \ref{thm:3.4}
below suggests) and decays rapidly 
in $K$ far from the interface where $u^N(t,x)$ would be almost flat.
In this sense, the estimate obtained in Proposition \ref{prop:nabla-u}
may not be the best possible.  In a weak sense, one can prove the
behavior $u_x(t)-u_y(t) \sim \sqrt{K}/N$ (instead of $K/N$) for
$x,y: |x-y|=1$ as in the next lemma.

\begin{lem}  \label{lem:4.4}
We have
$$
\frac12\sum_{x\in\T_N^d} u_x(T)^2 + \int_0^T N^2 \sum_{x,y\in\T_N^d:|x-y|=1}
(u_x(t)-u_y(t))^2 dt \le CKN^d T + \frac12 N^d.
$$
\end{lem}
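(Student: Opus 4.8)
The plan is to prove this by a standard $L^2$ (energy) estimate for the discretized hydrodynamic equation \eqref{eq:HD-discre}. First I would multiply \eqref{eq:HD-discre} by $u_x(t)$ and sum over $x\in\T_N^d$, which yields
$$
\frac{d}{dt}\,\frac12 \sum_{x\in\T_N^d} u_x(t)^2
= \sum_{x\in\T_N^d} u_x(t)\, \De^N u_x(t)
+ K \sum_{x\in\T_N^d} u_x(t)\, f^N(x,u(t)).
$$
The two terms on the right are then handled separately: the diffusion term produces the gradient (Dirichlet) contribution appearing on the left-hand side of the asserted inequality, while the reaction term is controlled by the boundedness of $f^N$.

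The central computation is the discrete summation by parts for the diffusion term. Recalling $\De^N u_x = N^2 \sum_{y:|y-x|=1}(u_y-u_x)$ and grouping the double sum over nearest-neighbor pairs by bonds, the elementary identity $u_x(u_y-u_x)+u_y(u_x-u_y)=-(u_x-u_y)^2$ gives
$$
\sum_{x\in\T_N^d} u_x\, \De^N u_x
= -\frac{N^2}{2}\sum_{x,y\in\T_N^d:|x-y|=1}(u_x-u_y)^2 .
$$
Since $\T_N^d$ is periodic there are no boundary terms, and this is (up to the convention on counting each bond) exactly the negative of the gradient term on the left-hand side of the claim. This summation-by-parts identity, together with keeping track of the factor and the bond-counting convention, is really the only point requiring genuine care.

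For the reaction term I would use that $f^N(x,u)$ is bounded (as already noted in the proof of Proposition \ref{prop:nabla-u}) and that $0\le u_x(t)\le 1$ for all $t\in[0,T]$, the latter following from the condition {\rm (A7)}$'$ and the comparison principle discussed below Theorem \ref{prop:4.4-2}; hence $|u_x f^N(x,u)|\le C$ uniformly and $K\sum_x u_x f^N(x,u)\le CKN^d$. Combining the two bounds gives the differential inequality
$$
\frac{d}{dt}\,\frac12 \sum_{x\in\T_N^d} u_x(t)^2
+ \frac{N^2}{2}\sum_{x,y\in\T_N^d:|x-y|=1}(u_x(t)-u_y(t))^2
\le CKN^d .
$$
Integrating in $t$ over $[0,T]$ and bounding the initial energy by $\frac12\sum_x u_x(0)^2\le \frac12 N^d$ (again using $u_x(0)\in[0,1]$) then yields the assertion. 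I do not expect any serious obstacle here: the argument is entirely routine, the only delicate bookkeeping being the summation-by-parts constant and the uniform bound on $u_x f^N$, both of which are elementary given the results already established.
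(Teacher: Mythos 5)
Your proof is correct and is essentially the paper's own argument: multiply \eqref{eq:HD-discre} by $u_x(t)$, sum over $x$, use discrete summation by parts on the torus to produce the Dirichlet term, bound the reaction contribution by $u_x f^N(x,u)\le C$ using the a priori bounds on $u^N$, and integrate in time. The only (immaterial) difference is the bond-counting factor you flag: the paper reads the sum $\sum_{x,y:|x-y|=1}$ in the lemma as a sum over unordered bonds, which makes its stated constant $N^2$ agree with your $\tfrac{N^2}{2}$ over ordered pairs.
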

\begin{proof}
By multiplying $u_x(t) \, \big(= u^N(t,x)\big)$ to the both sides of \eqref{eq:HD-discre}
and taking the sum in $x$, we have
\begin{align*}
\frac12 \frac{d}{dt} \left\{\sum_{x\in\T_N^d} u_x(t)^2\right\}
& = \sum_{x\in\T_N^d} u_x(t) \De^N u_x(t) + K \sum_{x\in\T_N^d} u_x(t) f^N(x,u(t)) \\
& \le - N^2 \sum_{x,y\in\T_N^d:|x-y|=1} (u_x(t)-u_y(t))^2 + CK N^d.
\end{align*}
Here, we have used the bound $u_x f^N(x, u) \le C$.
Since $\sum_{x\in\T_N^d} u_x(0)^2 \le N^d$, we have the conclusion.
\end{proof}

\subsection{Proof of Theorem \ref{prop:4.4-2}}  \label{sec:4.2-a}

For the proof of Theorem \ref{prop:4.4-2},
we rely on the comparison argument for the discretized hydrodynamic 
equation \eqref{eq:HD-discre}; cf.\ \cite{F99}, Proposition 4.1 of \cite{F03}, 
Lemma 2.2 of \cite{FI} and Lemma 4.3 of \cite{FO}.

Assume that $f^N(x,u), u=(u_x)_{x\in \T_N^d}\in [0,1]^{\T_N^d}$ has the following property:
If $u=(u_x)_{x\in \T_N^d}, v=(v_x)_{x\in \T_N^d}$ satisfies $u\ge v$ 
(i.e., $u_y\ge v_y$ for all $y\in \T_N^d$) and
$u_x = v_x$ for some $x$, then $f^N(x,u)\ge f^N(x,v)$ holds.  Note that
$f^N(x,u)$ given in \eqref{eq:f^N} with $c^\pm(\eta)$ of the form 
\eqref{eq:1.cpm} has this property, since $c_x^+(u)$ is increasing
and $c_x^-(u)$ is decreasing in $u$ in this partial order by the condition (A3).

\begin{prop}  \label{prop:Comparison}
Let $u^\pm(t,x)$ be super and sub solutions of
\begin{equation}  \label{eq:DHD}
\partial_t u^\pm(t,x) = \De^N u^\pm(t,x) + Kf^N(x,u^\pm),\quad x\in \T_N^d.
\end{equation}
Namely, $u^+$ satisfies \eqref{eq:DHD} with \lq\lq$\ge$'', while $u^-$ satisfies it with
\lq\lq$\le$'' instead of the equality. 
If $u^-(0) \le u^+(0)$, then $u^-(t) \le u^+(t)$ holds for all $t>0$.  In particular, 
one can take the solution of \eqref{eq:DHD} as $u^+(t,x)$ or $u^-(t,x)$.
\end{prop}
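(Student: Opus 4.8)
The plan is to reduce the statement to showing that $w(t,x) := u^+(t,x) - u^-(t,x) \ge 0$ for all $t > 0$ and $x \in \T_N^d$, and to establish this by a discrete parabolic maximum principle tailored to the finite system of ODEs \eqref{eq:DHD}. Subtracting the sub-solution inequality from the super-solution inequality yields
\[
\partial_t w(t,x) \ge \De^N w(t,x) + K\big[f^N(x,u^+) - f^N(x,u^-)\big], \quad x\in\T_N^d .
\]
Since $u^\pm$ take values in a compact set and $f^N(x,\cdot)$ is a polynomial in finitely many coordinates, I would linearize the reaction along the segment joining $u^-$ and $u^+$, writing $f^N(x,u^+) - f^N(x,u^-) = \sum_{y} a_y(t,x)\, w(t,y)$ with $a_y(t,x) = \int_0^1 \partial_{u_y} f^N\big(x, u^- + \th(u^+-u^-)\big)\, d\th$. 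These coefficients are uniformly bounded, and the quasi-monotonicity property assumed just above \eqref{eq:DHD} (equivalently (A3)) forces $a_y(t,x) \ge 0$ for every $y \ne x$; only the diagonal coefficient $a_x(t,x)$ may be negative.

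The core of the argument is a first-contact-time estimate. For $\e > 0$ and $\la > 0$ to be fixed, set $v^\e(t,x) := w(t,x) + \e e^{\la t}$, so that $v^\e(0,x) \ge \e > 0$ by the hypothesis $u^-(0) \le u^+(0)$, and note $\De^N v^\e = \De^N w$ because the added term is spatially constant. I would argue by contradiction: if $v^\e$ failed to stay positive on $[0,T]$, continuity in $t$ would produce a first time $t_0 > 0$ and a site $x_0$ with $v^\e(t_0,x_0) = 0$ and $v^\e(t_0,\cdot) \ge 0$, whence $\partial_t v^\e(t_0,x_0) \le 0$ and $\De^N v^\e(t_0,x_0) \ge 0$ (the discrete Laplacian at a spatial minimum is nonnegative). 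Substituting $w = v^\e - \e e^{\la t}$ into the differential inequality and evaluating at $(t_0,x_0)$, the terms $\De^N v^\e$, $a_{x_0}(t_0,x_0)\,v^\e(t_0,x_0)=0$ and $\sum_{y\ne x_0} a_y(t_0,x_0)\, v^\e(t_0,y) \ge 0$ all carry a favorable sign, leaving $\partial_t v^\e(t_0,x_0) \ge \e e^{\la t_0}\big(\la - K\sum_y |a_y(t_0,x_0)|\big)$. Choosing $\la$ larger than $K$ times the uniform bound on $\sum_y|a_y|$ makes the right-hand side strictly positive, contradicting $\partial_t v^\e(t_0,x_0)\le 0$. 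Hence $v^\e > 0$ on $[0,T]$, and letting $\e \downarrow 0$ gives $w \ge 0$, i.e.\ $u^-(t) \le u^+(t)$. The final assertion is then immediate, since a genuine solution of \eqref{eq:DHD} is simultaneously a super- and a sub-solution.

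The main obstacle I anticipate is the treatment of the nonlinear reaction at the contact point: because of the perturbation $\e e^{\la t}$ one does not literally have $u^+ \ge u^-$ pointwise when $v^\e$ first vanishes, so the quasi-monotonicity hypothesis cannot be invoked in the exact form stated (which presupposes $u \ge v$). Linearizing $f^N$ and isolating the sign of the off-diagonal coefficients $a_y$, $y\ne x_0$, is precisely what circumvents this, while the possibly negative diagonal term $a_{x_0}$ is absorbed into the exponential weight by taking $\la$ large. A secondary point to verify is that $\la$ may depend on $K$ and on the $C^1$-bounds of $f^N$ over the range of $u^\pm$, but since the argument runs only on the fixed finite interval $[0,T]$ this dependence is harmless.
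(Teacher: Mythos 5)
Your proof is correct, and it shares the paper's basic skeleton --- a discrete maximum principle at a first contact point, using that the discrete Laplacian of the difference is favorable at a spatial minimum together with the monotone structure of $f^N$ --- but your execution differs in two substantive ways, and the comparison is worth recording. The paper argues directly on $w=u^+-u^-$: at a first contact point $(t,x)$ one has $w(t,\cdot)\ge 0$ with $w(t,x)=0$, so the quasi-monotonicity hypothesis applies \emph{verbatim} to give $f^N(x,u^+)-f^N(x,u^-)\ge 0$, hence $\partial_t w(t,x)\ge 0$, and the proof ends there. That ending is logically incomplete as stated: $\partial_t w\ge 0$ at a contact point does not by itself exclude $w$ dipping negative immediately afterwards (think of $w(t)=-(t-t_0)^2$). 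Your two devices --- the strict perturbation $v^\e=w+\e e^{\la t}$, and the replacement of the verbatim use of quasi-monotonicity by the linearization $f^N(x,u^+)-f^N(x,u^-)=\sum_y a_y(t,x)\,w(t,y)$ with $a_y\ge 0$ for $y\ne x$ (the differential form of (A3), which is what survives once the perturbation destroys the exact ordering $u^+\ge u^-$ that the stated hypothesis presupposes) --- are precisely what is needed to turn the contact argument into a strict sign contradiction, the possibly negative diagonal coefficient being absorbed by choosing $\la>K\sup_{t,x}\sum_y|a_y(t,x)|$. So yours is a rigorization of the paper's argument rather than a different method: the paper buys brevity, you buy a complete proof. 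Two points you should make explicit: first, as the paper also implicitly assumes, $u^\pm$ (and hence the linearization segment, being a convex combination) must stay in the region where (A3)/quasi-monotonicity holds, e.g.\ $[0,1]^{\T_N^d}$; second, the equivalence you invoke between quasi-monotonicity and $\partial_{u_y}f^N(x,\cdot)\ge 0$ for $y\ne x$ deserves a line of justification --- for the polynomial $f^N$ it follows by comparing $u$ and $u+se_y$, $s>0$, which are ordered and agree at $x$.
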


\begin{proof}
Assume that $u^+(t)\ge u^-(t)$ and $u^-(t,x)=u^+(t,x)$ hold at some $(t,x)$.
Since $u^\pm$ are super and sub solutions of \eqref{eq:DHD}, we have
\begin{align*}
\partial_t (u^+-u^-)(t,x) 
\ge \De^N (u^+-u^-)(t,x) + K \big(f^N(x,u^+)- f^N(x,u^-)\big).
\end{align*}
On the other hand, noting that
\begin{align*}
\De^N (u^+-u^-)(t,x) & = N^2\sum_{\pm e_i} \big( (u^+-u^-)(t,x\pm e_i)
- (u^+-u^-)(t,x) \big)  \\
& = N^2\sum_{\pm e_i} (u^+-u^-)(t,x\pm e_i) \ge 0,
\end{align*}
and that $f^N(x,u^+)- f^N(x,u^-)\ge0$ by the assumption, we have $\partial_t (u^+-u^-)(t,x) \ge0$.
This shows that $u^-(t)$ can not exceed $u^+(t)$ for all $t>0$.
\end{proof}

For $\de\in\R$ with $|\de|$ sufficiently small, one can find a traveling wave
solution $U=U(z;\de), z\in \R$, which is increasing in $z$ and its speed 
$c=c(\de)$ by solving an ordinary differential equation:
\begin{align*}
&U'' + cU' + \{f(U)+\de\}=0, \quad z \in \R, \\
& U(\pm\infty) = U_\pm^*(\de),
\end{align*}
where $U_-^*(\de)<U_+^*(\de)$ are two stable solutions of $f(U)+\de=0$.
Note that $U_-^*(0)=\a_1, U_+^*(0)=\a_2$ and $c(0)=0$.  The solution 
$U(z;\de)$ is unique up to a translation and one can choose $U(z;\de)$
satisfying $U_\de(z;\de)\ge 0$; see \cite{CHL}, p.1288.
Note also that the traveling wave solution  $U$ is associated with the 
one-dimensional version of the reaction-diffusion equation and not 
with the discrete equation \eqref{eq:DHD}.  Indeed, $u(t,z) := U(z-ct)$
solves the equation
$$
\partial_t u = \partial_z^2 u +f(u) +\de, \quad z\in \R,
$$
which is a one-dimensional version of \eqref{eq:RD} or
\eqref{eq:RD-2} with $K=1$ considered on the whole line $\R$
in place of $\T^d$ with $f$ replaced by $f+\de$ and connecting
two stable solutions $U_\pm^*(\de)$ at $z=\pm\infty$.

Let $\Ga_t, t\in [0,T]$ be the motion of smooth hypersurfaces in $\T^d$
determined by \eqref{eq:4.MMC}.
Let $\tilde d(t,v), t\in [0,T], v\in \T^d$ be the signed distance
function from $v$ to $\Ga_t$, and similarly to \cite{CHL}, p.1289,
let $d(t,v)$ be a smooth modification of $\tilde d$ such that
\begin{align*}
& d= \tilde d, & & \text{ if }\; |\tilde d(t,v) | < d_0, \\ 
& d_0< |d| \le 2d_0 \; \text{ and }\; d \tilde d > 0, 
& & \text{ if } \; d_0 \le |\tilde d(t,v) | < 2 d_0, \\ 
& |d| = 2d_0 \; \text{ and }\; d \tilde d > 0, 
& & \text{ if } \; |\tilde d(t,v) | \ge 2 d_0,
\end{align*}
where $d_0>0$ is taken such that $\tilde d(t,v)$ is smooth in the domain
$\{(t,v); |\tilde d(t,v) | < 2 d_0, t\in [0,T]\}$.
We define two functions
$\rho_{\pm}(t,v) \equiv \rho_{\pm}^K(t,v)$ by
$$
\rho_\pm(t,v) = U\left( K^{1/2}(d(t,v) \pm K^{-a} e^{m_2t}); \pm K^{-1}
m_3 e^{m_2t}\right), \quad a, m_2, m_3 >0.
$$
Applying Proposition \ref{prop:Comparison} and repeating computations of Lemma 3.4 in
\cite{CHL}, we have the following theorem for $u^N(t,v)$ defined by 
\eqref{eq:2.uN-m} from the solution $u^N(t,x)$ of \eqref{eq:HD-discre}.
The functions $\rho_\pm(t,v)$ describe the sharp transition of $u^N(t,\cdot)$
and change their values quickly from one side to the other crossing 
the interface $\Ga_t$ to the normal direction.

\begin{theorem}  \label{thm:3.4}
We assume the conditions {\rm (A2), (A3)} and {\rm (A5)}, in particular
 $\{\Ga_t\}_{t\in [0,T]}$ is smooth and the following inequality \eqref{eq:3.3-com}
holds at $t=0$.  The condition on $K$ can be relaxed and we assume
$K \le CN^{2/3}$ for $K=K(N) \to \infty$.
Then, taking $m_2, m_3>0$ large enough, there exists $N_0\in \N$ such that
\begin{equation}  \label{eq:3.3-com}
\rho_-(t,v) \le u^N(t,v) \le \rho_+(t,v),
\end{equation}
holds for every $a>1/2$, $t\in [0,T]$, $v= x/N, x \in \T_N^d$ and $N\ge N_0$.
\end{theorem}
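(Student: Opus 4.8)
The plan is to invoke the comparison principle of Proposition \ref{prop:Comparison} for the discretized equation \eqref{eq:DHD}: it suffices to show that $\rho_+$ is a (discrete) super-solution and $\rho_-$ a (discrete) sub-solution of \eqref{eq:DHD}, because the ordering $\rho_-(0,v)\le u^N(0,v)\le\rho_+(0,v)$ is exactly the hypothesis \eqref{eq:3.3-com} at $t=0$, and the comparison property of $f^N$ required by Proposition \ref{prop:Comparison} is guaranteed by the monotonicity assumption (A3). By the symmetry of the construction under $\de_+\leftrightarrow\de_-$ and under the shift $\pm K^{-a}e^{m_2t}$, I treat only $\rho_+$ and verify the super-solution inequality
\[
\mathcal{E}_+(t,x) := \partial_t\rho_+ - \De^N\rho_+ - Kf^N(x,\rho_+) \ge 0, \quad t\in[0,T],\ x\in\T_N^d,
\]
for all $N\ge N_0$.

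First I would carry out the continuous computation, temporarily replacing $\De^N$ by the continuous $\De$ and $f^N(x,\rho_+)$ by $f(\rho_+(x/N))$, following Lemma 3.4 of \cite{CHL} with $\e=K^{-1/2}$. Writing $\xi_+=K^{1/2}(d+K^{-a}e^{m_2t})$ and $\de_+=K^{-1}m_3e^{m_2t}$, differentiation of $\rho_+=U(\xi_+;\de_+)$ together with $|\na d|=1$ gives
\begin{align*}
\partial_t\rho_+ &= K^{1/2}U'\,\partial_t d + K^{1/2-a}m_2e^{m_2t}U' + m_2\de_+U_\de, \\
\De\rho_+ &= K\,U'' + K^{1/2}U'\,\De d.
\end{align*}
Substituting the traveling-wave identity $U''=-c(\de_+)U'-f(U)-\de_+$ cancels the $KU''$ term against $-Kf(\rho_+)$, leaving
\[
\mathcal{E}_+^{\mathrm{cont}} = K\de_+ + K^{1/2}U'(\partial_t d - \De d) + Kc(\de_+)U' + K^{1/2-a}m_2e^{m_2t}U' + m_2\de_+U_\de.
\]
Here $K\de_+=m_3e^{m_2t}>0$ is the dominant forcing; the last two terms are nonnegative since $U'\ge0$ and $U_\de\ge0$; the geometric term is $O(1)$ because the motion by mean curvature \eqref{eq:4.MMC} is equivalent to $\partial_t d=\De d$ on $\Ga_t$ with $|\partial_t d-\De d|\le C|d|$ nearby, while $K^{1/2}|d|\,U'(\xi_+)$ is bounded by the exponential decay of $U'$; and $Kc(\de_+)U'=O(1)$ since $|c(\de_+)|\le C\de_+=O(K^{-1})$.

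The \emph{genuinely new point} is to control the two discretization errors. For the Laplacian, a fourth-order Taylor expansion gives $|\De^N\rho_+-\De\rho_+|\le CN^{-2}\|\partial^4\rho_+\|_\infty$; since each spatial derivative of $\rho_+$ produces a factor $K^{1/2}$, one has $\|\partial^4\rho_+\|_\infty\le CK^2\|U^{(4)}\|_\infty$, so this error is $O(K^2/N^2)$. For the nonlinearity, because $f^N(x,u)$ depends on $u$ at the neighbours $x+n_1,x+n_2$ whereas $f(\rho_+(x/N))$ uses only the value at $x/N$, expanding in the discrete increments $\rho_+(x+n_k)-\rho_+(x)=O(K^{1/2}/N)$ yields $|f^N(x,\rho_+)-f(\rho_+(x/N))|\le CK^{1/2}/N$, whence $K|f^N-f|=O(K^{3/2}/N)$. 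Under the relaxed hypothesis $K\le CN^{2/3}$ both errors are at most $O(1)$ — indeed $K^2/N^2\le CN^{-2/3}\to0$ and $K^{3/2}/N\le C$ — so they are of the same order as the geometric and speed terms above.

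Finally I would close the argument by absorption: all the $O(1)$ terms (the geometric term, the speed term, and the two discretization errors) are bounded by a constant independent of $N$, so taking $m_3$ large forces $K\de_+=m_3e^{m_2t}\ge m_3$ to dominate them and gives $\mathcal{E}_+\ge0$, while $m_2$ is chosen large so that the room $K^{-a}e^{m_2t}$ and the forcing $m_3e^{m_2t}$ grow enough to absorb the time-accumulated errors in a Gronwall-type fashion, exactly as in \cite{CHL}. The symmetric computation shows $\mathcal{E}_-\le0$, and Proposition \ref{prop:Comparison} then yields \eqref{eq:3.3-com} for all $t\in[0,T]$. I expect the main obstacle to be the uniform estimate of the nonlinearity discretization error $f^N(x,\rho_+)-f(\rho_+(x/N))$ near the interface — where the increments $\rho_+(x+n_k)-\rho_+(x)$ are largest — together with checking that the threshold $K\le CN^{2/3}$ is precisely what keeps every discretization error at most $O(1)$, so that the $N$-independent constants $m_2,m_3$ can indeed be chosen to close the super- and sub-solution inequalities simultaneously.
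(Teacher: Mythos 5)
Your proposal is correct and follows essentially the same route as the paper's own proof: verify via the Chen--Hilhorst--Logak type computation (with $\e=K^{-1/2}$) that $\rho_\pm$ are discrete super/sub-solutions, bound the two discretization errors so that they stay $O(1)$ precisely under $K\le CN^{2/3}$ (with the nonlinearity error $K\cdot O(K^{1/2}/N)$ being the binding one, as you note), absorb everything by taking $m_3$, then $m_2$, large, and conclude with the comparison principle of Proposition \ref{prop:Comparison} using (A3) and (A5). The only cosmetic deviations are that you use a fourth-order Taylor expansion for the Laplacian error, giving $O(K^2/N^2)$ where the paper's third-order expansion gives $O(K^{3/2}/N)$ (both suffice), and that you write $|\nabla d|=1$ globally, whereas for the modified distance function this holds only near $\Ga_t$, so the paper retains the extra term $T_2=(1-|\nabla d|^2)KU_{zz}$ and bounds it below by $-C$ via the same exponential-decay argument you already invoke for the geometric term.
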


\begin{proof}
Let us show that
$$
L^{N,K} \rho_+ := \frac{\partial \rho_+}{\partial t} - \De^N\rho_+ - K f^N(x,\rho_+) \ge 0,
$$
for every $N\ge N_0$ with some $N_0\in \N$.  We decompose
\begin{align}  \label{eq:4.LNK}
& L^{N,K} \rho_+ = L^K \rho_+ 
+( \De\rho_+ - \De^N\rho_+) + K(f(\rho_+(\tfrac{x}N)) -  f^N(x,\rho_+)),
\intertext{where $\De$ is the continuous Laplacian on $\T^d$ and}
& L^K \rho_+ = \frac{\partial \rho_+}{\partial t} -\De\rho_+ -K f(\rho_+(v)).
\label{eq:4.5-a}
\end{align}
 
The term $L^K \rho_+$ can be treated as in \cite{CHL}, from the bottom of p.1291 to
p.1293.  Note that $\e^{-2}$ in their paper corresponds to $K$ here, and
they treated the case with a non-local term, which we don't have.
Since we can extend $m_1 \e e^{m_2 t}$ in the definition of super and sub solutions
in their paper to $K^{-a}e^{m_2t}$ (i.e., we can take $K^{-a}$ instead of $m_1\e$)
for every $a>0$, we briefly repeat their argument by adjusting it to our setting.
The case with noise term is discussed by \cite{F99}, pp.412-413.

In fact, $L^K \rho_+$ can be decomposed as
$$
L^K \rho_+ = T_1+T_2 + T_3 + m_3e^{m_2 t},
$$
where
\begin{align*}
T_1 & = \left\{ \frac{\partial d}{\partial t} + K^{-a}m_2e^{m_2t}-\De d
+ K^{1/2} c(K^{-1}m_3e^{m_2t}) \right\} K^{1/2} U_z, \\
T_2 & = \left\{ 1- |\nabla d|^2 \right\} K U_{zz},  \\
T_3 & = K^{-1}m_3m_2e^{m_2t}U_\de,
\end{align*}
by just writing $K^{-a}$ instead of $m_1\e$ (i.e., here $m_1 = K^{1/2-a} \searrow 0$)
in \cite{CHL}, p.1292 noting that $W(\nu,\de) = c(\de)$, $U(z;\nu,\de) = U(z;\de)$ 
and $C_2=0$ in our setting.  Repeating their arguments, one can show that, if $m_2$ is 
large enough compared with $m_3$, $T_1, T_2 \ge -C$ hold for some $C>0$,
and $T_3\ge 0$ since $U_\de \ge 0$.  Therefore, we obtain
\begin{equation}  \label{eq:4.LK-m}
L^K\rho_+ \ge m_3e^{m_2 t} -2C.
\end{equation}

For the rest in \eqref{eq:4.LNK}, since $d(t,v)$ and $U(z)$ so that
$\rho_\pm$ are smooth in $v$, we have
\begin{align*}
& \De\rho_+(t,\tfrac{x}N) - \De^N\rho_+(t,\tfrac{x}N)  = O\Big( N^2 \big(\tfrac{K^{1/2}}N\big)^3 \Big)
= O\Big( \tfrac{K^{3/2}}N \Big), \\
& K \big(f(\rho_+(t,\tfrac{x}N)) -  f^N(x,\rho_+(t,\cdot))\big) = K \cdot O\Big(\tfrac{K^{1/2}}N \Big).
\end{align*}
The first one follows from Taylor expansion for $\De^N\rho_+$ up to the
third order term, while the second one follows by taking the expansion up to the first order
term.  Therefore, if $K=O(N^{2/3})$, these terms stay bounded in $N$ and
are absorbed by $L^K\rho_+$ estimated in \eqref{eq:4.LK-m} with $m_3$ chosen large
enough.  Thus, we obtain $L^{N,K}\rho_+\ge 0$.
The lower bound by $\rho_-(t,v)$ is shown similarly.
\end{proof}

Theorem \ref{prop:4.4-2} readily follows from Theorem \ref{thm:3.4}
by noting that we have from the definitions of $\rho_\pm(t,v)=\rho^K_\pm(t,v)$,
\begin{align}  \label{eq:4.5-A}
\lim_{K\to\infty}\rho^K_\pm(t,v)=U(\pm\infty;0)=
\begin{cases}
\a_1, & \text{if $\tilde d(t,v)<0$},\\
\a_2, & \text{if $\tilde d(t,v)>0$},
\end{cases}
\end{align}
for $t\in[0,T]$ and $v\notin\Gamma_t$.

\begin{rem}
The choice $\pm K^{-1}m_3e^{m_2t}$ in the definition
of the super and sub solutions is the best.  In fact, in stead of $K^{-1}$,
if we take $K^{\b-1}$ with $\b>0$, then we may consider $m_3=K^\b$
but this diverges so that $m_2$ also must diverge.   On the other hand,
if $\b<0$, as the above proof shows, we don't have a good control.
\end{rem}

\end{document}